\documentclass[10pt,twoside]{article}


\usepackage{graphicx,latexsym,euscript,makeidx,color,bm}
\usepackage{amsmath,amsfonts,amssymb,amsthm,thmtools,mathrsfs,enumerate}
\usepackage[colorlinks,linkcolor=blue,anchorcolor=green,citecolor=red]{hyperref}

 \def\sD{\mathscr{D}}    
\def\dbE{\mathbb{E}}     
\def\dbF{\mathbb{F}}   \def\cF{{\cal F}}

 \def\sK{\mathscr{K}}

\def\dbP{\mathbb{P}}     
     
\def\dbR{\mathbb{R}}     
\def\dbS{\mathbb{S}}


\def\ss{\smallskip}             \def\hb{\hbox}
\def\ms{\medskip}              
        \def\lan{\langle}    \def\as{\hbox{\rm a.s.}}
\def\ds{\displaystyle}   \def\ran{\rangle}    \def\tr{\hbox{\rm tr$\,$}}
         
\def\no{\noindent}          
\def\ns{\noalign{\ss}}

\def\rf{\eqref}            
\def\deq{\triangleq}     \def\({\Big (}       
\def\les{\leqslant}      \def\){\Big )}       
\def\ges{\geqslant}      \def\[{\Big[}        

\def\da{\mathop{\downarrow}}
          \def\]{\Big]}        
      \def\q{\quad}        
\def\h{\widehat}         \def\qq{\qquad}      \def\1n{\negthinspace}
\def\cd{\cdot}           \def\2n{\1n\1n}      \def\3n{\1n\1n\1n}


\def\a{\alpha}           \def\g{\gamma}   \def\Om{\Omega}  \def\om{\omega}
\def\b{\beta}         \def\D{\Delta}   \def\d{\delta}   \def\F{\Phi}     
\def\z{\zeta}           \def\th{\theta}  \def\Si{\Sigma}  \def\si{\sigma}
\def\e{\varepsilon}     \def\l{\lambda}  \def\m{\mu}      \def\n{\nu}
    \def\t{\tau}     \def\f{\varphi}  \def\i{\infty}   
\def\x{\xi}           \def\k{\kappa}

\def\ba{\begin{array}}                \def\ea{\end{array}}
\def\bel{\begin{equation}\label}      \def\ee{\end{equation}}
\def\be{\begin{equation}}
\newtheorem{theorem}{Theorem}[section]
\newtheorem{definition}[theorem]{Definition}
\newtheorem{proposition}[theorem]{Proposition}
\newtheorem{corollary}[theorem]{Corollary}
\newtheorem{lemma}[theorem]{Lemma}
\newtheorem{remark}[theorem]{Remark}

\newenvironment{taggedassumption}[1]
 {\taggedassumptionx}
 {\endtaggedassumptionx}
\makeatletter
   
   \@addtoreset{equation}{section}
\makeatother
\sloppy  \allowdisplaybreaks[4]

\makeatletter
\oddsidemargin.9375in
\evensidemargin \oddsidemargin
\marginparwidth1.9375in
\makeatother

\textwidth4.6in
\topmargin.0in
\textheight7.5in
\pagestyle{myheadings}


\markboth{$~$ \hfill {\rm C. Li and J. Yong} \hfill $~$} {$~$
\hfill {\rm An Optimal Impulse Control Problem With A Decision Lag} \hfill$~$}

\begin{document}
\thispagestyle{empty}
\setcounter{page}{1}


\begin{center}
{\large\bf A FINITE HORIZON OPTIMAL STOCHASTIC IMPULSE CONTROL PROBLEM WITH A DECISION LAG}

\vskip.20in

Chang Li\ and\ Jiongmin Yong \\[2mm]
{\footnotesize
Department of Mathematics\\
University of Central Florida, Orlando, FL, USA\\[5pt]
Corresponding author email: changli@knights.ucf.edu

}
\end{center}

{\footnotesize
\noindent
{\bf Abstract.} This paper studies an optimal stochastic impulse control problem in a finite time horizon with a decision lag, by which we mean that after an impulse is made, a fixed number units of time has to be elapsed before the next impulse is allowed to be made. The continuity of the value function is proved. A suitable version of dynamic programming principle is established, which takes into account the dependence of state process on the elapsed time. The corresponding Hamilton-Jacobi-Bellman (HJB) equation is derived, which exhibits some special feature of the problem. The value function of this optimal impulse control problem is characterized as the unique viscosity solution to the corresponding HJB equation. An optimal impulse control is constructed provided the value function is given. Moreover, a limiting case with the waiting time approaching $0$ is discussed. \\[3pt]

{\bf Keywords.} Impulse control, decision lag, dynamic programming, viscosity solution, diffusion processes. \\[3pt]
{\small\bf AMS (MOS) subject classification:} 93E20, 49L20, 49L25, 49N25.}

\vskip.2in

\section{Introduction}\label{Sec:Introduction}

\noindent Let $(\Om,\cF,\dbF,\dbP)$ be a complete filtered probability space on which a $d$-dimensional standard Brownian motion $W(\cd)$  is defined, with $\dbF$ being its natural filtration augmented by all the $\dbP$-null sets. Consider the following stochastic differential equation (SDE, for short):
\bel{state1}
X(s)=x+\int_t^sb(\t,X(\t))d\t+\int_t^s\si(\t,X(\t))dW(\t)+\xi(s),\q s\in[t,T],\ee
where $b:[0,T]\times\dbR^n\to\dbR^n$ and $\si:[0,T]\times\dbR^n\to\dbR^{n\times d}$ are some suitable deterministic maps, $X(\cd)$ is the {\it state process} with $t\in[0,T)$ being the {\it initial time} and $x\in\dbR^n$ being the {\it initial state}, and $\xi(\cd)$ is called an {\it impulse control} of the following form:
\bel{xi}\xi(s)=\sum_{i\ges1}\x_i\chi_{[\t_i,T]}(s),\q s\in[t,T].\ee
Here, $\{\t_i\}_{i\ges1}$ is an increasing sequence of $\dbF$-stopping times valued in
$[t,T]$, and each $\xi_i$ is an $\cF_{\t_i}$-measurable square integrable random variable taking values in $K$, with $K\subseteq\dbR^n$ being a closed convex cone. Unlike the classical impulse control problems (\cite{Bensoussan-Lions 1984}), for any two consecutive impulses, a gap is required:
\bel{>t}\t_{i+1}-\t_i\ges\d,\qq i\ges1,\qq\as,\ee
for some fixed constant $\d>0$ which is called a {\it decision lag}. In another word, after an impulse is made, another immediate impulse is not allowed. In reality, this makes a perfect sense and one can easily cook up examples for this. For example, contributions to the retirement account (biweekly, monthly, or skipping), adjustment of the portfolio made by a fund manager (monthly, quarterly, or no changes), trading assets in some security market\footnote{In current stock market of China, there is a so-called ``$T+1$'' rule, meaning that buying a stock today, one is not allowed to sell it until tomorrow.}, to mention a few (\cite{ Aouchiche 2011,  Bayraktar-Egami 2007, Bar-Ilan 1996, Alvarez 2002, Stettner 2011}).

\ms

Due to the existence of the decision lag, for any initial time $t\in[0,T)$, whether an impulse at $t$ or shortly after is allowed depends on when was the last impulse made before time $t$. To more precisely describe this, we introduce a variable $r\in[0,T)$, called an {\it elapsed time}, which is defined by the following: Suppose $\t_0$ is the last moment before $t$ at which an impulse was made. Then $r=t-\t_0$ is the elapsed time at $t$. We make a convention that if no impulse has ever been made on $[0,t)$, then set the elapsed time $r=\d\vee t$, so that an immediate impulse at $t$ is allowed. From this, we see that it makes more sense to take $(t,r,x)$ as the {\it initial triple}, with $(t,x)\in[0,T)\times\dbR^n$ being the usual {\it initial pair} and with $r\in[0,T)$ being the {\it initial elapsed time}. We let $\sD$ be the set of all initial triples $(t,r,x)$. Thus,
\bel{sD}\sD=[0,T]\times[0,T]\times\dbR^n.\ee
Now, for any $(t,r,x)\in\sD$, we let $\sK_r[t,T]$ be the set of all impulse controls of the form \rf{xi} with \rf{>t} being true and with the initial elapsed time $r$. Then for any $\xi(\cd)\in\sK_r[t,T]$ of form \rf{xi}, we claim that
\bel{t_1}\t_1\ges(t-r+\d)\vee t.\qq\as\ee
In fact, if the last impulse before $t$ was made at $\t_0$, then
$$\t_1\ges(\t_0+\d)\vee t=(t-r+\d)\vee t,\qq\as;$$
and in the case that no impulse has ever been made in $[0,t)$, by our convention, one has
$$\t_1\ges t=(t\land\d)\vee t=(t-\d\vee t+\d)\vee t=(t-r+\d)\vee t,\qq\as$$
Thus, \rf{t_1} holds. Clearly, the role played by $r$ is in the determination of $\t_1$. Moreover, we see that
\bel{K}\ba{ll}
\ns\ds\sK_{\hat r}[t,T]\subseteq\sK_r[t,T]\subseteq\sK_\d[t,T]=\sK_{r'}[t,T],\\
\ns\ds\qq\qq\qq\qq\qq\qq\qq\forall\,0\les\hat r\les r\les\d\les r'\les T.\ea\ee

Under proper conditions, for any $(t,r,x)\in\sD$ and $\xi(\cd)\in\sK_r[t,T]$, state equation \rf{state1} admits a unique solution $X(\cd)\equiv X(\cd\,;t,r,x,\xi(\cd))$. To measure the performance of the impulse control, we introduce the following cost functional:
\bel{J1}J(t,r,x;\xi(\cd))=\dbE\Big\{\int_t^Tg(s,X(s))ds+ h(X(T))+\sum_{i\ges1}\ell(\t_i,\xi_i)\Big\},\ee
for some suitable deterministic maps $g(\cd\,,\cd)$, $h(\cd)$ and $\ell(\cd\,,\cd)$. The terms on the right-hand side are the {\it running cost}, the {\it terminal cost}, and the {\it impulse cost}, respectively. Our optimal control problem can be stated as follows.

\ms

\bf Problem (IC). \rm For any $(t,r,x)\in\sD$, find a $\bar\xi(\cd)\in\sK_r[t,T]$ such that
\bel{bar xi}J(t,r,x;\bar\xi(\cd))=\inf_{\xi(\cd)\in\sK_r[t,T]}J(t,r,x;\xi(\cd))\equiv V(t,r,x).\ee

\ms

Any $\bar\xi(\cd)\in\sK_r[t,T]$ satisfying \rf{bar xi} is called an {\it optimal impulse control}, and $\bar X(\cd)\equiv X(\cd\,;t,r,x,\bar\xi(\cd))$ is called the corresponding {\it optimal state process}. We call $V(\cd\,,\cd\,,\cd)$ the {\it value function} of Problem (IC).

\ms

Classical optimal impulse control theory can be traced back to the work of Bensoussan--Lions in the early 1970s (\cite{Bensoussan-Lions 1973, Bensoussan-Lions 1984}). Many follow-up literature appeared since then, see \cite{Menaldi 1987, Lenhart 1989, Tang-Yong 1993, Yong 1994, Borkar-Ghosh-Sahay 1999, Menaldi-Robin 2017}, for examples. It is well-known that for a classical impulse control problem, if the state equation is a stochastic differential equation with deterministic coefficients and the cost functional also only involves deterministic functions, then under some mild conditions, the value function of the problem is the unique viscosity solution to a Hamilton-Jacobi-Bellman equation of a quasi-variational inequality form. Once the value function is determined, an optimal impulse control can be constructed, which solves the optimal impulse control problem.

\ms

Optimal impulse control problems with one (fixed) {\it execution lag} were firstly studied by Robin in the middle of 1970s (\cite{Robin 1976}). Unlike problems with decision lag, in a problem with an execution lag, one decides, at some $\t_i$, an impulse $\xi_i$ to be made, which will be realized at a later time $\t_i+\D$ for some fixed lag $\D>0$. Due to the fact that the decision lag $\d\ges0$ might be smaller than the execution lag $\D$, there could be some pending ``orders'', the impulses ordered during $(\t_i,\t_i+\D)$. For details, see \cite{Bruder-Pham 2009}, in which the execution lag is an integer multiple of the decision lag, i.e., $\D=m\d$. To get more feeling, as well as for the purpose of comparison with the results of the current paper, let us consider the situation studied in \cite{Bruder-Pham 2009} of maximum pending order $m=1$ and minimizing the cost instead of maximizing the payoff. Let $v^0(t,x)$ be the optimal value of the cost functional corresponding the initial pair $(t,x)$ with no pending order and $v^1(t,x,(\t,\xi))$ be the optimal value of the cost functional corresponding to the initial pair $(t,x)$ with one pending order $(\t,\x)$ (the impulse ordered at $\t$ with size $\x$ which will be exercised at $\t+\d$), then the corresponding HJB equation system is as follows:
\bel{v^0}\left\{\2n\ba{ll}
\ds \min\Big\{v^0_t(t,x)+H(t,x,v^0_x(t,x),v^0_{xx}(t,x)),\\
\ns\ds \qq\inf_{\x\in K}\big[v^1\big(t,x,(t,\x)\big)\big]\1n-\1n v^0(t,x)\Big\}\1n=\1n0,\q(t,x)\1n\in\1n [0,T\1n-\1n\d]\1n\times\1n\dbR^n,\\
\ns\ds v^0_t(t,x)\1n+\1n H(t,x,v^0_x(t,x),v^0_{xx}(t,x))\1n=\1n0,\q(t,x)\1n\in\1n(T\1n-\1n\d,T)
\1n\times\1n\dbR^n,\\ [1mm]
\ns\ds v^0(T,x)=h(x), \qq\qq~x\in\dbR^n.
\ea\right.\ee
\bel{v^1}\left\{\ba{ll}
\ds v^1_t\big(t,x,(\t,\x)\big)+H\big(t,x,v^1_x(t,x,(\t,\x)),v^1_{xx}(t,x,(\t,\x))
\big)=0,\\ [1mm]
\qq\qq\qq(t,x,(\t,\x))\in [\t, \t+\d)  \times\dbR^n \times ([0,T-\d] \times K),\\
\ns\ds v^1\big((\t+\d)-,x,(\t,\x)\big)=c(x,\x)+v_0(
\t+\d,x+\x),\\ [1mm]
\qq\qq\qq\qq\qq (x,(\t,\x)) \in\dbR^n \times ([0,T-\d]\times K).\ea\right.\ee
Note that the above two equations are coupled in the following way: $v^1(\cd\,,
\cd\,,\cd)$ appears in the obstacle of the equation for $v^0(\cd\,,\cd)$; $v^0(\cd\,,\cd)$ appears in the terminal condition of the equation for $v^1(\cd\,,\cd\,,\cd)$. We will see some common feature and some significant difference between the above and our HJB equation later.

\ms

Let us now briefly recall the main relevant results in several other papers.
Optimal impulse control problem in an infinite horizon with an execution lag was investigated in \cite{Bayraktar-Egami 2007,Oksandal-Sulem 2008}, where no HJB equations were derived. A switching problem with decision lag and execution lag for discrete-time systems was studied in \cite{Aouchiche 2011}, in which, only some numerical algorithms were presented. In \cite{Stettner 2011}, an asymptotic optimization problem of terminal wealth with decision lag or execution lag under HARA utility was studied. Some kind of Bellman dynamic programming equations corresponding to several situations were presented. However, still no HJB equations were derived. An optimal switching problem with a decision lag for ODEs was studied in \cite{Granato-Zidani 2014}. A reachable set was characterized by the level set of the value function which is the unique viscosity solution to a first order HJB equation. In \cite{Hdhiri-Karouf 2018}, an optimal impulse control problem is considered for a general stochastic process (without concrete SDE state equation) with execution lag. Snell envelope and reflected BSDEs were used to obtain the optimal impulse controls. In \cite{Palczewski-Stettner 2010}, an optimal impulse problem in a finite horizon with arbitrary number of pending orders for Feller process were investigated without corresponding HJB equation derived.

\ms

In this paper, we consider the optimal impulse control problem with a decision lag (without execution lag). It should be pointed out that, unlike the above-cited works, we have paid a special attention on the {\it elapsed time} since the last impulse was made. The introduction of the elapsed time $r$ helps us to fully understand the problem. Because of that, our value function is of form $V(t,r,x)$ and therefore, $V_r(t,r,x)$ will naturally appear, which makes our HJB equation significantly different from those in the literature, say, of form \rf{v^0}--\rf{v^1}. We will show directly the continuity of the value function $V(t,r,x)$ in all its arguments, by using some ideas from \cite{Yong 1994}, unlike some indirect and complicated arguments used in \cite{Bruder-Pham 2009}. Then we establish a suitable version of dynamic programming principle using an argument inspired by \cite{Fleming-Souganidis 1989}, which leads to the corresponding HJB equations. We further show that the value function is the unique viscosity solution to the HJB equation, by a technique adopted from \cite{Yong-Zhou 1999}. Moreover, an optimal impulse control is constructed from the given value function. Finally, a limiting case with the decision lag approaching $0$ is discussed, which exactly recovers the classical impulse control problems.

\ms

The remaining part of the paper is organized as follows. Section 2 introduces the value function associated with the control problem and its properties. Section 3 provides a suitable version of dynamic programming principle and derived the corresponding HJB equations. In Section 4, the value function is proved to be the unique viscosity solution of HJB equations in some given function space (with some technical details put in the appendix) and an optimal impulse control was constructed with verification theorem. Finally, Section 5 concludes the paper.

\section{The Value Function and Its Properties}\label{Sec:Value Function}

Recall that $(\Om,\cF,\dbP)$ is a complete probability space on which a standard $d$-dimensional Brownian motion $W(\cd)=\{W(t);0\les t<\i\}$ is defined, with $\dbF=\{\cF_t\}_{t\ges0}$ being its natural filtration augmented by all the $\dbP$-null sets in $\cF$. Let $T>0$ be given and let $K$ be a closed convex cone in $\dbR^n$. For the coefficients of the state equation \rf{state1}, we introduce the following assumption.

\begin{taggedassumption}{(H1)}\label{H1} \rm Let $b:[0,T]\times\dbR^n\to\dbR^n$, $\si:[0,T]\times\dbR^n\to\dbR^{n\times d}$ be continuous and there exists a constant $L>0$ such that, for all $x, \hat{x} \in \dbR^n, t \in [0,T]$,
\bel{b-si}\ba{ll}
\ns\ds|b(t,x)-b(t,\hat x)|+|\si(t,x)-\si(t,\hat x)|\les L|x-\hat x|,\\
\ns\ds|b(t,x)|+|\si(t,x)|\les L.\ea\ee

\end{taggedassumption}

For the functions involved in the cost functional, we introduce the following assumption.

\ms

\begin{taggedassumption}{(H2)}\label{H2} \rm Let
$g:[0,T]\times\dbR^n\to\dbR$, $h:\dbR^n\to\dbR$ and $\ell:[0,T]\times K\to\dbR^+$ be continuous and there are constants $\ell_0,\a>0$ such that, for all $x,\hat x\in\dbR^n$, $0\les t<\hat t\les T$, and $\xi,\hat\xi\in K$,
\bel{gh}\ba{ll}
\ns\ds|g(t,x)-g(t,\hat x)|+|h(x)-h(\hat x)|\les L|x-\hat x|,\\
\ns\ds|g(t,x)|+|h(x)|\les L,\ea\ee
and
\bel{ell}\ba{ll}
\ns\ds\ell(t,\xi+\hat\xi)<\ell(t,\xi)+\ell(t,\hat\xi),\\
\ns\ds\ell(\hat t,\xi)\les\ell(t,\xi),\qq\ell(t,\xi)\ges\ell_0+\a|\xi|.\ea\ee
\end{taggedassumption}

\begin{remark} \rm Some of the assumptions stated above can be slightly relaxed. For example, in the spirit of  \cite{Tang-Yong 1993}, we may let $x\mapsto(b(s,x),\si(s,x))$ be of linear growth, and $x\mapsto(g(s,x),h(x))$ be of some power growth. Also, the coercivity condition in \rf{ell} can be relaxed a little.
\end{remark}

Next, we introduce admissible impulse control processes with decision lag $\d\in(0,T)$. Some relevant discussions have already been carried out in the previous section.

\begin{definition}\rm An admissible impulse control process on $[t,T]$, with decision lag $\d$ and elapsed time $r$, is defined to be of form
\bel{xi(s)}\xi(s)=\sum_{i\ges1}\x_i\chi_{[\t_i,T]}(s),\qq t\les s\les T,\ee
such that the following are true:

\ms

(i) Each $\t_i$ is an $\dbF$-stopping time with
\bel{firsttime}\t_1\ges(t+\d-r)\vee t,\qq\as,\ee
and
\bel{lag}\left\{\2n\ba{ll}
\ds\t_{i+1}\ges\t_i+\d,\qq\as,\q\hb{if $\t_{i+1}<T$},\\
\ns\ds\t_{i+1}\ges\t_i,\qq\q~~\as,\q\hb{if $\t_{i+1}=T$}.\ea\right.\ee

(ii) Each $\x_i$ is $\cF_{\t_i}$-measurable with values in $K$, and
\bel{}\dbE\(\sum_{i\ges 1}\ell(\t_i,\x_i)\)<\i.\ee
\end{definition}

We let $\sK_r[t, T]$ be the set of all the impulse control processes on $[t, T]$ with decision lag $\d$ and elapsed time $r$. The last impulse time before $t$ is always denoted by $\t_0$. Then we have
\bel{r}
r=t-\t_0 \qq \text{and} \qq \t_1 \ges (\t_0 + \d)\vee t,
\ee
which give us \rf{firsttime}, i.e.  we must wait at least $(\d-r)^+$ units of time to make the first impulse after $t$. Subsequently, \rf{lag} indicates that we may intervene on the system at any times $\t_i\in[\t_1,T)$ separated at least by the decision lag $\d$. Further, the impulse can be made at terminal time $T$ without decision lag, which is an important condition to guarantee the continuity of the value function.  Besides, $r=\d\vee t$ if there has been no impulse executed on $[0,t)$. The above indicates the dependence of the impulse control on $r$, we therefore put $r$ as subscript in $\sK_r[t,T]$. Thanks to the decision lag $\d$, for any $\x(\cd)\in\sK_r[t,T]$, there exists a finite number $\k(\x(\cd))$ with
\bel{k}\k(\x(\cd))\les\[{\frac{T}{\d}}\]+1.\ee
such that
\bel{xi(L)}\x(\cd)=\sum_{i=1}^{\k(\x(\cd))}\x_i\chi_{[\t_i,T]}(\cd).\ee
We should note that an impulse control with no impulse and with zero impulses are different due to the condition \rf{ell} for the impulse cost. It is clear that any impulse control with some zero impulses are not optimal. Hereafter, we exclude all impulse controls with some zero impulses from $\sK_r[t,T]$. On the other hand, for convenience, we will use $\xi_0(\cd)$ to denote the impulse control that does not contain any impulses and call it the {\it trivial impulse control}.

\ms

Let us first present the following result which will be useful below.

\begin{proposition}\label{state} \sl Let {\rm\ref{H1}} hold. Then for any initial triple $(t,r,x)\in\sD$ and impulse control $\xi(\cd)\in\sK_r[t,T]$, state equation \rf{state1} admits a unique solution $X(\cd)\equiv X(\cd\,;t,r,x,\xi(\cd))$. Further, if $(\hat t,\hat r,\hat x)\in\sD$ with $\hat t\in[t,T]$, $\h\xi(\cd)\in\sK_{\hat r}[\hat t,T]$, and $\h X(\cd)=X(\cd\,;\hat t,\hat r,\hat x,\h\xi(\cd))$, then for any $p\ges1$, and $s\in[\hat t,T]$,
\bel{|X-X|}\ba{ll}
\ns\ds\dbE\[\sup_{s'\in[\hat t,s]}|X(s')-\h X(s')|^p\]\\
\ns\ds\les
C\dbE\[|x-\hat x|^p+|t-\hat t|^{\frac{p}{2}}+\(\sum_{\t_i<\hat t}|\xi_i|\)^p+\sup_{s'\in[\hat t,s]}|\xi(s')-\h\xi(s')|^p\],\ea\ee
and
\bel{|X-X|*}\ba{ll}
\ns\ds\dbE|X(s)-\h X(s)|^p\les C\dbE\[|x-\hat x|^p+|t-\hat t|^{\frac{p}{2}}+\(\sum_{\t_i<\hat t}|\xi_i|\)^p\\
\ns\ds\qq\qq\qq\qq\q+\(\int_{\hat t}^s|\xi(\t)-\h\xi(\t)|^2d\t\)^{\frac{p}{2}}+|\xi(s)-\h\xi(s)|^p\].\ea\ee
Hereafter, $C$ stands for a generic constant which could be different from line to line.

\end{proposition}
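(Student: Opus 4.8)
The plan is to strip the jumps off the state process, reducing everything to the classical $L^p$-theory of SDEs with Lipschitz coefficients. Given $(t,r,x)\in\sD$ and an impulse control $\xi(\cd)\in\sK_r[t,T]$ of the form \rf{xi(L)}, set $Y(\cd):=X(\cd)-\xi(\cd)$; then $X(\cd)$ solves \rf{state1} if and only if $Y(\cd)$ solves
\bel{Y-eq-plan}Y(s)=x+\int_t^s b\big(\t,Y(\t)+\xi(\t)\big)\,d\t+\int_t^s\si\big(\t,Y(\t)+\xi(\t)\big)\,dW(\t),\q s\in[t,T].\ee
Since $\xi(\cd)$ is $\dbF$-adapted and right-continuous, hence progressively measurable, and since by {\rm\ref{H1}} the maps $b,\si$ are bounded and Lipschitz in the state variable uniformly in time, the coefficients $(\t,y,\om)\mapsto b(\t,y+\xi(\t,\om))$ and $(\t,y,\om)\mapsto\si(\t,y+\xi(\t,\om))$ of \rf{Y-eq-plan} are progressively measurable, bounded, and Lipschitz in $y$ with the same constant $L$. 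The classical existence--uniqueness theorem for SDEs with random Lipschitz coefficients then gives a unique $\dbF$-adapted continuous solution $Y(\cd)$, whence $X(\cd):=Y(\cd)+\xi(\cd)$ is the unique solution of \rf{state1}. (Equivalently, one may solve \rf{state1} step by step on the random intervals cut out by the impulse times, finitely many by \rf{k}, and paste them together, the possibility $\t_{i+1}=\t_i=T$ allowed by \rf{lag} being harmless.)

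For the two estimates I would fix $p\ges1$ and split $[\hat t,T]$ at $\hat t$. Writing \rf{state1} at $s=\hat t$ and invoking the boundedness of $b,\si$, the Burkholder--Davis--Gundy inequality, and $|t-\hat t|\les T$, one obtains a bound of the form
\bel{gap-plan}\dbE|X(\hat t)-\h X(\hat t)|^p\les C\,\dbE\[|x-\hat x|^p+|t-\hat t|^{\frac{p}{2}}+\(\sum_{\t_i<\hat t}|\xi_i|\)^p+|\xi(\hat t)-\h\xi(\hat t)|^p\],\ee
in which $(\sum_{\t_i<\hat t}|\xi_i|)^p$ is a crude bound for the jump $\xi(\hat t)$ that $\xi(\cd)$ has already accumulated on $[t,\hat t)$. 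On $[\hat t,T]$ the difference $X(\cd)-\h X(\cd)$ solves the difference of two copies of \rf{state1}, in which $\xi-\h\xi$ enters only as an additive forcing (it sits outside $b$ and $\si$) while the drift and diffusion are Lipschitz in the state; applying BDG to the diffusion term and then Gronwall's lemma bounds the left-hand side of \rf{|X-X|} by $C$ times the right-hand side of \rf{gap-plan}, plus a contribution of $\xi-\h\xi$ on $[\hat t,s]$. For \rf{|X-X|} this contribution, together with the endpoint term $|\xi(\hat t)-\h\xi(\hat t)|^p$ in \rf{gap-plan}, is absorbed into $\dbE\,\sup_{s'\in[\hat t,s]}|\xi(s')-\h\xi(s')|^p$; for \rf{|X-X|*}, where no supremum is taken, one keeps the endpoint term $|\xi(s)-\h\xi(s)|^p$ explicit and, for the remaining contribution, uses Cauchy--Schwarz to replace $\int_{\hat t}^s|\xi(\t)-\h\xi(\t)|\,d\t$ by $\big(\int_{\hat t}^s|\xi(\t)-\h\xi(\t)|^2d\t\big)^{1/2}$, together with BDG on the attendant martingale term, which yields the factor $\big(\int_{\hat t}^s|\xi(\t)-\h\xi(\t)|^2d\t\big)^{p/2}$. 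For $1\les p<2$ the same scheme works, Jensen's inequality serving to interchange the $L^p$- and $L^2$-norms in time.

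I do not expect a genuine obstacle here: once the jumps are removed as above, only the standard machinery of SDE moment estimates is involved, and the real work is bookkeeping: carrying the jump process $\xi(\cd)$ (in particular the jump accumulated at the splitting time $\hat t$) correctly through the passage between $X$ and $Y$, and steering the Burkholder--Davis--Gundy and Gronwall steps so that the right-hand sides come out in exactly the forms displayed in \rf{|X-X|} and \rf{|X-X|*}, uniformly over all $p\ges1$ (the estimates being read as vacuous whenever a right-hand side fails to be finite, e.g. for large $p$ with only square-integrable impulses).
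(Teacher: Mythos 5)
Your proof is correct and follows essentially the same route as the paper's: split at $\hat t$, bound the initial gap $\dbE|X(\hat t)-\h X(\hat t)|^p$ via boundedness of $b,\si$ and BDG (the $(\sum_{\t_i<\hat t}|\xi_i|)^p$ term absorbing the jumps that $X$ accumulated on $[t,\hat t)$), and then run BDG\,+\,Gronwall on $[\hat t,T]$ for the difference SDE with $\xi-\h\xi$ as additive forcing. The paper packages that second step by setting $Y=X-\h X-(\xi-\h\xi)$ and introducing the quotient matrices $B,\Si$ so that $Y$ solves a genuinely linear SDE, and proves existence/uniqueness by a terse appeal to the contraction mapping theorem rather than your explicit $Y=X-\xi$ phase-shift, but these are only cosmetic variants of the same argument.
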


\begin{proof} \rm First of all, for any $(t,r,x)\in\sD$ and $\xi(\cd)\in\sK_r[t,T]$, by a standard argument making use of the contraction mapping theorem, we know that the state equation \rf{state1} admits a unique solution $X(\cd)\equiv X(\cd\,;t,r,x,\xi(\cd))$. Then for any $\hat x\in\dbR^n$, we have
$$\ba{ll}
\ns\ds\dbE\[\sup_{s'\in[t,s)}|X(s')-\hat x|^p\]\les4^{p-1}\dbE\[|x-\hat x|^p+\(\int_t^s|b(\t,X(\t))|d\t\)^p\\
\ns\ds\qq\qq\qq\qq\qq+\sup_{s'\in[t,s)}\Big|\int_t^{s'}\si(\t,X(\t))dW(\t)\Big|^p
+\sup_{s'\in[t,s]}|\xi(s)|^p\]\\
\ns\ds\les4^{p-1}\[|x-\hat x|^p\1n+L^p(s-t)^p\1n+C\dbE\(\int_t^s\1n|\si(\t,X(\t))|^2d\t\)^{\frac{p}{2}}\1n+\dbE\(\sum_{\t_i<s}|\xi_i|\)^p\]\\
\ns\ds\les C\[|x-\hat x|^p+(s-t)^{\frac{p}{2}}+\dbE\(\sum_{\t_i<s}|\xi_i|\)^p\].\ea$$
In particular, for any $\hat t\in(t,T)$,
$$\dbE|X(\hat t-)-\hat x|^p\les C\[|x-\hat x|^p+|t-\hat t|^{\frac{p}{2}}+\dbE\(\sum_{\t_i<\hat t}|\xi_i|\)^p\].$$
Next, for $(t,r,x),(\hat t,\hat r,\hat x)\in\sD$ with $0\les t<\hat t$, and $\xi(\cd)\in\sK_r[t,T]$, $\h\xi(\cd)\in\sK_{\hat r}[\hat t,T]$, let $X(\cd)$ and $\h X(\cd)$ be the corresponding solutions of the state equation \rf{state1}.
Denote
$$\ba{ll}
\ns\ds\eta(s)=\xi(s)-\h\xi(s),\qq Y(s)=X(s)-\h X(s)-\eta(s),\\
\ns\ds B(s)=\frac{[b(s,X(s))-b(s,\h X(s))][X(s)-\h X(s)]^\top}{|X(s)-\h X(s)|^2}\chi_{\{X(s)\ne\h X(s)\}},\\
\ns\ds\Si(s)=\frac{[\si(s,X(s))-\si(s,\h X(s))][X(s)-\h X(s)]^\top}{|X(s)-\h X(s)|^2}\chi_{\{X(s)\ne\h X(s)\}}.\ea$$
Then $B(\cd)$ and $\Si(\cd)$ are bounded and
$$Y(s)\1n=\1n Y(\hat t\,)+\1n\int_{\hat t}^s\1n B(\t)\big[Y(\t)+\eta(\t)\big]d\t+\1n\int_{\hat t}^s\1n\Si(\t)\big[Y(\t)+\eta(\t)\big]dW(\t),\q\3n s\1n\in\1n[\hat t,T].$$
This is equivalent to the following:
$$\left\{\2n\ba{ll}
\ds dY(s)=B(s)[Y(s)+\eta(s)]ds+\Si(s)[Y(s)+\eta(s)]dW(s),\qq s\in[\hat t,T],\\
\ns\ds Y(\hat t\,)=X(\hat t-)-\hat x-[\xi(\hat t\,)-\h\xi(\hat t\,)].\ea\right.$$
Hence, by \ref{H1}, and a standard argument for SDEs, we have
\bel{|Y|}\dbE\[\sup_{s'\in[\hat t,s]}|Y(s')|^p\]\2n\les\1n C\dbE\[|Y(\hat t\,)|^p+\(\int_{\hat t}^s\2n|\eta(\t)|d\t\)^p\3n+\1n\(\int_{\hat t}^s\2n|\eta(\t)|^2d\t\)^{p\over2}\].\ee
Consequently,
$$\ba{ll}
\ns\ds\dbE\[\1n\sup_{s'\in[\hat t,s]}\1n|X(s')-\h X(s')|^p\]\1n\les\1n2^{p-1}\dbE\[\1n\sup_{s'\in[\hat t,s]}\1n|Y(s')|^p\1n+\2n\sup_{s'\in[\hat t,s]}\1n|\xi(s')-\h\xi(s')|^p\]\\
\ns\ds\les C\dbE\Big\{|X(\hat t-)-\hat x|^2+|\xi(\hat t\,)-\h\xi(\hat t\,)|^p+\(\int_{\hat t}^s|\xi(\t)-\h\xi(\t)|^2d\t\)^{p\over2}\\
\ns\ds\qq\qq\qq\qq\qq\qq\qq\qq\qq\qq+\sup_{s'\in[\hat t,s]}|\xi(s')-\h\xi(s')|^p\]\Big\}\\
\ns\ds\les C\dbE\[|x-\hat x|^p+|t-\hat t|^{p\over2}+\(\sum_{\t_i<\hat t}|\xi_i|\)^p+\sup_{s'\in[\hat t,s]}|\xi(s')-\h\xi(s')|^p\].\ea$$
This gives \rf{|X-X|}. Also, from \rf{|Y|}, we have
$$\dbE|Y(s)|^p\les C\dbE\[|Y(\hat t)|^p+\(\int_{\hat t}^s|\eta(\t)|d\t\)^p+\(\int_{\hat t}^s|\eta(\t)|^2d\t\)^{p\over2}\],$$
which implies
$$\ba{ll}
\ns\ds\dbE|X(s)-\h X(s)|^p\les2^{p-1}\dbE\[|Y(s)|^p+|\xi(s)-\h\xi(s)|^p\]\\
\ns\ds\les \1n C\dbE\[|x\1n-\1n\hat x|^p\2n+\1n|t\1n-\1n\hat t|^{p\over2}\2n+\1n\(\1n\sum_{\t_i<\hat t}|\xi_i|\)^p\2n+\1n\(\1n\int_{\hat t}^s\1n|\xi(\t)\1n-\1n\h\xi(\t)|^2d\t\)^{p\over2}\3n+\1n|\xi(s)\1n-\1n\h\xi(s)|^p\].\ea$$
This completes the proof.
\end{proof}

\ms

From the above, we see that under \ref{H1}--\ref{H2}, for any initial triple $(t,r,x)\in\sD$ and $\xi(\cd)\in\sK_r[t,T]$, the cost functional \rf{J1} is well-defined. Then Problem (IC) can be stated as in the previous section, and the value function $V:\sD\to\dbR$ is well-defined by \rf{bar xi}. The following result is concerned with some basic properties of the value function.

\begin{theorem}\label{properties} \sl Let {\rm\ref{H1}--\ref{H2}} hold. Then
\bel{S1}|V(t,r,x)|\les L(T+1),\qq\forall(t,r,x)\in\sD,\ee
and
\bel{S2}\ba{ll}
\ns\ds|V(t,r,x)-V(\hat t,\hat r,\hat x)|\les C\big(|t-\hat t|^{1\over2}+|r\land\d
-\hat r\land\d|^{1\over2}+|x-\hat x|\big),\\
\ns\ds\qq\qq\qq\qq\qq\qq\qq\qq\forall(t,r,x),(\hat t,\hat r,\hat x)\in\sD.\ea\ee

\end{theorem}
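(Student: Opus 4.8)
The plan is to obtain \rf{S1} by evaluating the cost at the trivial control, and \rf{S2} by transplanting a nearly optimal control between the two initial triples and estimating the resulting change of cost through Proposition \ref{state}. For \rf{S1}: for any $(t,r,x)\in\sD$ the trivial control $\xi_0(\cd)\in\sK_r[t,T]$ yields $J(t,r,x;\xi_0(\cd))=\dbE[\int_t^Tg(s,X(s))ds+h(X(T))]$, which by \rf{gh} lies in $[-L(T+1),L(T+1)]$, so $V(t,r,x)\les L(T+1)$; and since $\ell\ges0$, any $\xi(\cd)$ satisfies $J(t,r,x;\xi(\cd))\ges\dbE[\int_t^Tg(s,X(s))ds+h(X(T))]\ges-L(T+1)$, whence $V(t,r,x)\ges-L(T+1)$.

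For \rf{S2} I would first record three reductions. By \rf{K}, $\sK_r[t,T]=\sK_{r\land\d}[t,T]$, hence $V(t,r,x)=V(t,r\land\d,x)$, so we may assume $r,\hat r\in[0,\d]$; then $|r\land\d-\hat r\land\d|=|r-\hat r|$. By \rf{S1} the left side of \rf{S2} is $\les 2L(T+1)$, so we may assume $|t-\hat t|<\d\land1$; then by \rf{lag} every admissible control makes at most one impulse in $[t\land\hat t,t\vee\hat t)$. Finally, for any $\xi(\cd)$ with $J(t,r,x;\xi(\cd))\les V(t,r,x)+1$ we get from \rf{ell} and \rf{S1} that $\a\,\dbE(\sum_i|\xi_i|)\les\dbE(\sum_i\ell(\t_i,\xi_i))\les J(t,r,x;\xi(\cd))+L(T+1)\les C$, so $\dbE(\sum_i|\xi_i|)$ is bounded uniformly over such controls; this, together with the finite impulse count \rf{k}, is used repeatedly below.

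Now assume $t\les\hat t$ (by symmetry of \rf{S2}) and factor the passage $(t,r,x)\to(\hat t,\hat r,\hat x)$ through $(t,r,\hat x)$ and $(\hat t,r+(\hat t-t),\hat x)$. \textbf{(i)} $|V(t,r,x)-V(t,r,\hat x)|\les C|x-\hat x|$: the two problems share the control set $\sK_r[t,T]$, so apply a nearly optimal control of one triple to the other and use \rf{|X-X|} with $p=1$ together with \rf{gh}. \textbf{(ii)} For $0\les r\les\hat r\les\d$, $0\les V(t,r,x)-V(t,\hat r,x)\les C(\hat r-r)^{1/2}$: the left inequality is $\sK_r[t,T]\subseteq\sK_{\hat r}[t,T]$; for the right one, delay every impulse time of a nearly optimal $\h\xi(\cd)$ for $(t,\hat r,x)$ by the common amount $\D:=[(t+\d-r)-\h\t_1]^+\les\hat r-r$ (truncating at $T$, keeping sizes) to obtain a control in $\sK_r[t,T]$; its impulse cost does not increase by the monotonicity in \rf{ell}, and \rf{|X-X|*} (with $\hat t=t$, $\hat x=x$) together with the uniform bound on $\dbE(\sum_i|\xi_i|)$ and \rf{k} give $\dbE|X(s)-\h X(s)|\les C\D^{1/2}+C\,\dbE|\xi(s)-\h\xi(s)|$, the last term vanishing at $s=T$ and with integral over $[t,T]$ of order $\D$, so the two costs differ by $O((\hat r-r)^{1/2})$. \textbf{(iii)} For $t\les\hat t$ with $\hat t-t<\d\land1$ and $\rho:=r+(\hat t-t)$, $|V(t,r,x)-V(\hat t,\rho,x)|\les C(\hat t-t)^{1/2}$: for ``$\les$'', let $\xi(\cd)$ do nothing on $[t,\hat t)$ and coincide on $[\hat t,T]$ with a nearly optimal $\h\xi(\cd)$ for $(\hat t,\rho,x)$ — the elapsed time at $\hat t$ being exactly $\rho$, $\xi(\cd)\in\sK_r[t,T]$, and $X(\hat t)-x=\int_t^{\hat t}b(\t,X(\t))d\t+\int_t^{\hat t}\si(\t,X(\t))dW(\t)$ is $O((\hat t-t)^{1/2})$ in $L^1$, so \rf{|X-X|*} and \rf{gh} bound the cost difference; for ``$\ges$'', take $\xi(\cd)$ nearly optimal for $(t,r,x)$, which has at most one impulse $(\t_1,\xi_1)$ with $\t_1<\hat t$, and define $\h\xi(\cd)$ by relocating that impulse (if present) to time $\hat t$ and shifting the remaining impulse times forward by $\D':=[\hat t+\d-\t_2]^+\les\hat t-t$ (truncating at $T$), or $\h\xi(\cd)=\xi(\cd)$ if there is no such impulse. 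Using $\t_1\ges(t+\d-r)\vee t$ and $\hat t-t<\d$ one checks $\h\xi(\cd)\in\sK_\rho[\hat t,T]$ with no larger impulse cost, that the accumulated impulses of $X(\cd\,;t,r,x,\xi(\cd))$ and $\h X:=X(\cd\,;\hat t,\rho,x,\h\xi(\cd))$ agree at $\hat t$ — so $X(\hat t)-\h X(\hat t)=\int_t^{\hat t}b\,d\t+\int_t^{\hat t}\si\,dW$ is $O((\hat t-t)^{1/2})$ in $L^1$ — and that on $[\hat t,T]$ their impulse controls differ only by time shifts of size $\les\hat t-t$; then \rf{|X-X|*} over $[\hat t,T]$ (with initial time $\hat t$ and random initial states $X(\hat t),\h X(\hat t)$, justified as in the proof of Proposition \ref{state}), the uniform bound on $\dbE(\sum_i|\xi_i|)$ and \rf{k} give $\dbE|X(s)-\h X(s)|\les C(\hat t-t)^{1/2}$ on $[\hat t,T]$, and \rf{gh} finishes it. Combining (i)--(iii) with the $1$-Lipschitz property of $s\mapsto s\land\d$ (so that $|(\rho\land\d)-(\hat r\land\d)|\les|r-\hat r|+|\hat t-t|$) yields \rf{S2}.

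The main difficulty is the ``$\ges$'' half of (iii). A direct comparison of $X(\cd\,;t,r,x,\xi(\cd))$ with a control based at $\hat t$ via \rf{|X-X|*} produces the term $\dbE(\sum_{\t_i<\hat t}|\xi_i|)=\dbE|\xi_1|$, which need not be small. Relocating the single pre-$\hat t$ impulse to exactly $\hat t$ — which is admissible precisely because the presence of $\t_1<\hat t$ forces $t+\d-r<\hat t$, so that $\hat t$ is an allowed first-impulse time for $\sK_\rho[\hat t,T]$ — aligns the accumulated impulses of the two state processes at time $\hat t$, cancelling that term and reducing the discrepancy on $[\hat t,T]$ to a genuine time shift of size $\les\hat t-t$, which \rf{|X-X|*} controls as in (ii).
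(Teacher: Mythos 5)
Your proof is correct and follows the paper's overall program: \rf{S1} via the trivial control, a reduction to the class $\sK^0_r$ with bounded $\dbE\sum_i|\xi_i|$, and three one-variable continuity lemmas obtained by transplanting a near-optimal control and invoking Proposition \ref{state}. Your steps (i) and (ii) match Lemmas \ref{L2} and \ref{vr}. Step (iii) diverges from the paper's Lemma \ref{vt}: the paper compares $V(t,r,x)$ with $V(\hat t,r,x)$ (same $r$) and shifts \emph{all} impulse times of a near-optimal control forward by $\hat t-t$, whereas you compare $V(t,r,x)$ with $V(\hat t,r+(\hat t-t),x)$ (the elapsed time naturally accrued at $\hat t$) and relocate the at-most-one pre-$\hat t$ impulse to exactly $\hat t$, shifting later impulse times by the random amount $\le\hat t-t$ needed to restore the lag. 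Your relocation makes the accumulated impulses of $X$ and $\h X$ coincide at $\hat t$, so their gap there is just the drift/diffusion increment, and \rf{|X-X|*} then closes the estimate directly. With the paper's uniform shift, the displayed inequality \rf{|X-X|*} read literally leaves the $O(1)$ term $\dbE\sum_{\t_i<\hat t}|\xi_i|$ in the bound on $\dbE|X(T)-\h X(T)|$ and on $\dbE\int_{\hat t}^T|X-\h X|\,d\t$; that term in fact cancels inside the proof of Proposition \ref{state} (the intermediate $Y(\hat t)$ is only the drift/diffusion increment and never sees $\xi_1$), so the paper's lemma is right, but your variant is self-contained with respect to the \emph{statement} of \rf{|X-X|*}. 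The price is the slightly more delicate admissibility check, which you carry out correctly: $\t_1<\hat t$ together with $\t_1\ges(t+\d-r)\vee t$ forces $t+\d-r<\hat t$, so $\hat t$ is a legal first impulse time in $\sK_{r+(\hat t-t)}[\hat t,T]$, and the common random shift of the later times preserves consecutive gaps, hence the lag constraint.
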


The proof is lengthy and technical, which will be split into several lemmas. First, we have the following lemma which gives the boundedness of the value function $V(\cd\,,\cd\,,\cd)$ as well as the Lipschitz continuity of $x\mapsto V(t,r,x)$.

\begin{lemma}\label{L2} \sl Let {\rm\ref{H1}--\ref{H2}} hold. Then \rf{S1} holds and there exists a constant $C>0$ such that
\bel{|V-V|<|x-x|}|V(t,r,x)-V(t,r,\hat x)|\les C|x-\hat x|,\qq\forall(t,r,x),(t,r,\hat x)\in\sD.\ee

\end{lemma}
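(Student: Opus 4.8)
The plan is to establish the two assertions separately, each by a direct estimate on the cost functional. For the bound \rf{S1}, I would start from the trivial impulse control $\xi_0(\cd)\in\sK_r[t,T]$, which is admissible since it contains no impulses. Along $\xi_0(\cd)$ the cost functional \rf{J1} reduces to $\dbE\big[\int_t^Tg(s,X(s))ds+h(X(T))\big]$, and by the boundedness part of \rf{gh} this is bounded in absolute value by $L(T-t)+L\les L(T+1)$. Hence $V(t,r,x)\les L(T+1)$. For the lower bound, observe that along any $\xi(\cd)\in\sK_r[t,T]$ the impulse cost term $\sum_{i\ges1}\ell(\t_i,\xi_i)$ is nonnegative by \rf{ell} (indeed $\ell$ is $\dbR^+$-valued), so $J(t,r,x;\xi(\cd))\ges \dbE\big[\int_t^Tg(s,X(s))ds+h(X(T))\big]\ges -L(T+1)$, and taking the infimum gives $V(t,r,x)\ges -L(T+1)$. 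This proves \rf{S1}.

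For \rf{|V-V|<|x-x|}, fix $(t,r,x)$ and $(t,r,\hat x)$ and use the standard comparison trick: for any $\e>0$ choose a near-optimal $\xi(\cd)\in\sK_r[t,T]$ with $J(t,r,x;\xi(\cd))\les V(t,r,x)+\e$. Since $t$, $r$, and the impulse control $\xi(\cd)$ are the same for both initial states, the same $\xi(\cd)$ is admissible for $(t,r,\hat x)$, so $V(t,r,\hat x)\les J(t,r,\hat x;\xi(\cd))$. Subtracting, the impulse-cost terms $\sum_i\ell(\t_i,\xi_i)$ cancel exactly, and we are left with
\[
V(t,r,\hat x)-V(t,r,x)\les\dbE\Big[\int_t^T\big|g(s,\h X(s))-g(s,X(s))\big|\,ds+\big|h(\h X(T))-h(X(T))\big|\Big]+\e,
\]
where $X(\cd)=X(\cd\,;t,r,x,\xi(\cd))$ and $\h X(\cd)=X(\cd\,;t,r,\hat x,\xi(\cd))$. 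By the Lipschitz bounds in \rf{gh} the right side is at most $L\,\dbE\big[\int_t^T|\h X(s)-X(s)|ds+|\h X(T)-X(T)|\big]+\e$. Now I invoke Proposition \ref{state} with the two initial triples $(t,r,x)$ and $(t,r,\hat x)$: here $\hat t=t$, the elapsed times play no role in \rf{|X-X|*}, $\xi(\cd)=\h\xi(\cd)$ so all the $\eta$-terms vanish, and \rf{|X-X|*} with $p=1$ gives $\dbE|X(s)-\h X(s)|\les C|x-\hat x|$ uniformly in $s\in[t,T]$. Plugging this in yields $V(t,r,\hat x)-V(t,r,x)\les C|x-\hat x|+\e$, and letting $\e\to0$ and exchanging the roles of $x$ and $\hat x$ gives \rf{|V-V|<|x-x|}.

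There is essentially no serious obstacle here; the lemma is a warm-up. The only point requiring a little care is the legitimacy of transplanting $\xi(\cd)$ from one initial state to the other: admissibility of an impulse control in $\sK_r[t,T]$ depends only on $t$, $r$, and the stopping-time/measurability structure of $\{(\t_i,\xi_i)\}$ (conditions \rf{firsttime}, \rf{lag}, and the integrability of the impulse cost), none of which involves the initial state $x$, so $\xi(\cd)\in\sK_r[t,T]$ is simultaneously admissible for every $x\in\dbR^n$. With that observation the argument is just the $\e$-optimal-control comparison combined with the a priori estimate of Proposition \ref{state}.
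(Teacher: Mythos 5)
Your proposal is correct and matches the paper's proof in essentially every respect: the upper bound comes from the trivial control $\xi_0$, the lower bound from nonnegativity of $\ell$, and the Lipschitz estimate from transplanting a common impulse control to both initial states and invoking Proposition \ref{state}. The paper bounds $|J(t,r,x;\xi)-J(t,r,\hat x;\xi)|\les C|x-\hat x|$ uniformly in $\xi$ and takes the infimum directly, while you phrase it via an $\e$-optimal control, and you cite \rf{|X-X|*} where the paper cites \rf{|X-X|} --- but these are cosmetic differences of the same argument.
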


\begin{proof} \rm First of all, recalling the trivial impulse control $\x_0(\cd)$. By the definition of $V(t,r,x)$ and {\rm\ref{H1}--\ref{H2}}, we know that
$$\ba{ll}
\ns\ds V(t,r,x)\les J(t,r,x;\xi_0(\cd))\\
\ns\ds=\dbE\Big\{\int_t^Tg\big(s,X(s;t,r,x,\xi_0(\cd))\big)ds+ h\big(X(T;t,r,x,\xi_0(\cd))\big)\Big\}\les L(T+1).\ea$$
On the other hand, since the impulse cost $\ell(\cd)$ is positive valued, we have that, for any $\xi(\cd)\in\sK_r[t,T]$,
$$\ba{ll}
\ds J(t,r,x;\xi(\cd))\ges\dbE\Big\{\int_t^Tg\big(s,X(s;t,r,x,\xi(\cd))\big)ds
+h\big(X(T;t,r,x,\xi(\cd))\big)\Big\}\\
\ns\ds \qq\qq\qq \ges-L(T+1).\ea$$
Thus, \rf{S1} follows.

\ms

Next, for any $p\ges1$, by \rf{|X-X|} with $(t,r)=(\hat t,\hat r)$ and $\xi(\cd)=\h\xi(\cd)$, one has
$$\dbE\[\sup_{s\in[t,T]}|X(s)-\h X(s)|^p\]\les C|x-\hat x|^p.$$
Consequently,
$$\ba{ll}
\ns\ds|J(t,r,x;\xi(\cd))-J(t,r,\hat x;\xi(\cd))|\\
\ns\ds\les\dbE\Big\{\int_t^T\1n|g(s,X(s))-g(s,\h X(s))|ds+|h(X(T))-h(\h X(T))|\Big\}\les C|x-\hat x|.\ea$$
Then \rf{|V-V|<|x-x|} follows. \end{proof}

\ms

Now, let us make an observation. For any $(t,r,x)\in\sD$, and any $\xi(\cd)\in\sK_r[t,T]$ of form \rf{xi(L)}, we have
$$\ba{ll}
\ns\ds J(t,r,x;\xi(\cd))=\dbE\[\int_t^Tg(s,X(s))ds+h(X(T))+\sum_{i=1}^{\k(\xi(\cd))}\ell(\t_i,\xi_i)\]\\
\ns\ds\ges-L(T-t+1)+\dbE\(\sum_{i=1}^{\k(\xi(\cd))}
\big[\ell_0+\a|\xi_i|\big]\)\ges-L(T+1)+\a\max_{i\ges1}\dbE|\xi_i|.\ea$$
Hence,
\bel{}\a\max_{i\ges1}\dbE|\xi_i|\les J(t,r,x;\xi(\cd))+L(T+1).\ee
Consequently, taking into account \rf{S1}, we see that there exists an absolute constant $C_0$ such that if an impulse control $\xi(\cd)$ of form \rf{xi(L)} satisfying
$$\dbE\(\max_{i\ges1}|\xi_i|\)>C_0,$$
then it must be not optimal. Hence, if we set (recall \rf{k})
\bel{K_r^0}\ba{ll}
\ns\ds\sK^0_r[t,T]\1n=\1n\Big\{\xi(\cd)=\3n\sum_{i=1}^{\k(\xi(\cd))}\3n\xi_i\k_{[\t_i,T]}
(\cd)\in\sK_r[t,T]\bigm|\\
\ns\ds\qq\qq\qq\k(\xi(\cd))\les\[{T\over\d}\]+1,\;\dbE|\xi_i|\les C_0,\q1\les i\les\k(\xi(\cd))\Big\},\ea\ee
then
\bel{}V(t,r,x)=\inf_{\xi(\cd)\in\sK^0_r[t,T]}J(t,r,x;\xi(\cd)).\ee
Note that, similar to \rf{K}, we also have
\bel{K^0}\ba{ll}
\ns\ds\sK^0_{\hat r}[t,T]\subseteq\sK_r[t,T]\subseteq\sK^0_\d[t,T]=\sK_{r'}[t,T],\\
\ns\ds\qq\qq\qq\qq\qq\qq\forall\,0\les\hat r
\les r\les\d\les r'\les T.\ea\ee
\ms

Now we are ready to prove the ${1\over2}$-H\"older continuity of $t\mapsto V(t,r,x)$.

\begin{lemma}\label{vt} \sl Let {\rm\ref{H1}--\ref{H2}} hold. Then there exists a constant $C>0$ such that
\bel{vte}\ba{ll}
\ns\ds|V(t,r,x)-V(\hat t,r,x)|\les C|t-\hat t|^{1\over2},\\
\ns\ds\qq\qq\qq\qq\forall(t,r,x),(\hat t,r, x)\in\sD,\q|\hat t-t|\les\d.\ea\ee
\end{lemma}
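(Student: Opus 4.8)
The plan is to establish the two one‑sided estimates $V(t,r,x)\les V(\hat t,r,x)+C|t-\hat t|^{1/2}$ and $V(\hat t,r,x)\les V(t,r,x)+C|t-\hat t|^{1/2}$ separately, by transferring a near‑optimal impulse control between the admissible classes $\sK_r[t,T]$ and $\sK_r[\hat t,T]$ and comparing the resulting costs via Proposition~\ref{state}. By symmetry we may assume $t<\hat t$ with $\hat t-t\les\d$; fix $\e>0$. For the first inequality, choose $\h\xi(\cd)\in\sK_r[\hat t,T]$ with $J(\hat t,r,x;\h\xi(\cd))\les V(\hat t,r,x)+\e$ and extend it to $[t,T]$ by $\ti\xi(\cd)=0$ on $[t,\hat t)$ and $\ti\xi(\cd)=\h\xi(\cd)$ on $[\hat t,T]$. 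Moving the initial time backward from $\hat t$ to $t$ only relaxes the first‑impulse constraint \rf{firsttime} (its right side is nondecreasing in the initial time), so $\ti\xi(\cd)\in\sK_r[t,T]$, with no impulse on $[t,\hat t)$. With $X(\cd)=X(\cd\,;t,r,x,\ti\xi(\cd))$ and $\h X(\cd)=X(\cd\,;\hat t,r,x,\h\xi(\cd))$, on $[\hat t,T]$ these solve the state equation with the same impulses and initial data $X(\hat t-)$, $x$ respectively; \ref{H1} gives $\dbE|X(\hat t-)-x|\les C|t-\hat t|^{1/2}$ (cf. the proof of Proposition~\ref{state}), hence $\dbE\sup_{[\hat t,T]}|X(\cd)-\h X(\cd)|\les C|t-\hat t|^{1/2}$. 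Since the two controls carry identical impulse costs and $\dbE\big|\int_t^{\hat t}g(s,X(s))\,ds\big|\les L(\hat t-t)$, the Lipschitz bounds in \ref{H2} give $V(t,r,x)\les J(t,r,x;\ti\xi(\cd))\les J(\hat t,r,x;\h\xi(\cd))+C|t-\hat t|^{1/2}$; let $\e\da0$.

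For the reverse inequality, use the reduction \rf{K_r^0} to pick a near‑optimal $\xi(\cd)=\sum_{i=1}^{\k}\xi_i\chi_{[\t_i,T]}(\cd)\in\sK^0_r[t,T]$, so $\k\les[T/\d]+1$ and $\dbE|\xi_i|\les C_0$. Two structural observations are decisive. First, since consecutive impulse times are at least $\d$ apart while $\hat t-t\les\d$, at most one impulse time of $\xi(\cd)$ — namely, possibly $\t_1$ — can lie in $[t,\hat t)$; moreover, if $\t_1<\hat t$, then \rf{firsttime} forces $\hat t-t>(\d-r)^+$, so the ``blackout window'' $\big[\hat t,(\hat t+\d-r)\vee\hat t\big)$ at $\hat t$ has length $\les\hat t-t$. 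Now define $\h\xi(\cd)$ on $[\hat t,T]$ by pushing the impulses forward: $\h\t_1:=\t_1\vee(\hat t+\d-r)\vee\hat t$, $\h\t_{i+1}:=\t_{i+1}\vee(\h\t_i+\d)$ (truncated at $T$), and $\h\xi_i:=\xi_i$. One checks that the $\h\t_i$ are $\dbF$‑stopping times with $\h\t_i\ges\t_i$ (so each $\h\xi_i$ is $\cF_{\h\t_i}$‑measurable), that $\h\xi(\cd)\in\sK_r[\hat t,T]$, and — by induction on $i$ using $\t_{i+1}\ges\t_i+\d$ — that the delays satisfy $\h\t_i-\t_i\les\hat t-t$. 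Consequently $\xi(\cd)$ and $\h\xi(\cd)$ differ only on the pairwise disjoint intervals $[\t_i,\h\t_i)$, each of length $\les\hat t-t$, so $\int_{\hat t}^T\dbE|\xi(s)-\h\xi(s)|\,ds\les(\hat t-t)\sum_i\dbE|\xi_i|\les C(\hat t-t)$, while $\xi(T)=\h\xi(T)=\sum_i\xi_i$.

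To compare $X(\cd)=X(\cd\,;t,r,x,\xi(\cd))$ with $\h X(\cd)=X(\cd\,;\hat t,r,x,\h\xi(\cd))$ I would split at $\h\t_1$. On $[\hat t,\h\t_1)$ — a deterministic interval of length $\les\hat t-t$ — the process $X(\cd)$ has already absorbed $\xi_1$ whereas $\h X(\cd)$ has not, so $|X(\cd)-\h X(\cd)|$ is of size $|\xi_1|$ there; nonetheless a direct SDE estimate gives $\dbE|X(\h\t_1)-\h X(\h\t_1)|\les C|t-\hat t|^{1/2}$, because the unmatched jump $\xi_1$ is replayed at $\h\t_1$ and the residual error is controlled by $\dbE|X(\hat t-)-x|$ plus $(\hat t-t)\,\dbE\sup_{[\hat t,\h\t_1]}|X(\cd)-\h X(\cd)|$. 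On $[\h\t_1,T]$ both processes now carry the impulses $\xi_i$, $i\ges2$ (at $\t_i$, resp. $\h\t_i$, with $\t_2\ges\h\t_1$), so Proposition~\ref{state} with common initial time $\h\t_1$ gives $\dbE|X(s)-\h X(s)|\les C\big(|t-\hat t|^{1/2}+\dbE|\xi(s)-\h\xi(s)|\big)$. Combining the two pieces with the bound on $\int_{\hat t}^T\dbE|\xi-\h\xi|$ and with $\xi(T)=\h\xi(T)$, the running‑ and terminal‑cost differences are $\les C|t-\hat t|^{1/2}$, while $\sum_i\ell(\h\t_i,\xi_i)\les\sum_i\ell(\t_i,\xi_i)$ since $\h\t_i\ges\t_i$ and $t\mapsto\ell(t,\xi)$ is nonincreasing (see \rf{ell}). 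Hence $V(\hat t,r,x)\les J(\hat t,r,x;\h\xi(\cd))\les J(t,r,x;\xi(\cd))+C|t-\hat t|^{1/2}\les V(t,r,x)+\e+C|t-\hat t|^{1/2}$, and $\e\da0$ finishes; if $\hat t=T$ the interval $[\hat t,T]$ is a single point and the argument simplifies.

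The main obstacle is the reverse inequality, and within it the treatment of an impulse of the near‑optimal control that falls into $[t,\hat t)$: it cannot be replayed on $[\hat t,T]$ at its original time, and dropping or naively relocating it introduces an error of order one. The resolution is to exploit the decision‑lag constraint to confine the unavoidable $|\xi_1|$‑sized mismatch to a time window of length $\les\hat t-t$, so that its cost contribution is $O(\hat t-t)=O(|t-\hat t|^{1/2})$; verifying that the forward‑pushed stopping times remain admissible with uniformly bounded delays is routine bookkeeping, needing a little care at the boundary case $\hat t-t=\d$ and when impulses pile up at $T$.
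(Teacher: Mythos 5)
Your proposal follows essentially the same strategy as the paper for both one-sided estimates: extend a near-optimal control backward by the trivial control for one direction, and shift a near-optimal control's impulse times forward for the other. Your shift rule (push each $\t_i$ forward only as far as needed, inductively) differs cosmetically from the paper's uniform shift $\hat\t_i=(\t_i+\hat t-t)\land T$; both yield the crucial delay bound $\h\t_i-\t_i\les\hat t-t$ and the monotonicity $\h\t_i\ges\t_i$ needed for the impulse-cost comparison via \rf{ell}.

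Where you genuinely diverge from — and actually improve on — the paper is in the treatment of an impulse time $\t_1\in[t,\hat t)$. The paper's displayed estimate applies \rf{|X-X|} directly, but that bound carries the term $\sum_{\t_i<\hat t}|\xi_i|$, which is $O(|\xi_1|)$ when $\t_1<\hat t$ and hence not small; the paper silently drops it, which is only justified by the cancellation of this jump against $\xi(\hat t)-\h\xi(\hat t)$ inside $Y(\hat t)$ in Proposition~\ref{state}'s proof — a cancellation the stated inequality \rf{|X-X|} has already discarded via the triangle inequality. Your two-piece split at $\h\t_1$ makes the cancellation explicit: the $O(|\xi_1|)$ mismatch lives only on the short window $[\hat t,\h\t_1)$ (length $\les\hat t-t$), and at $\h\t_1$ the replayed jump kills the $O(1)$ discrepancy, after which Proposition~\ref{state} yields the $O(|t-\hat t|^{1/2})$ bound. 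The observation that $\h\t_1$ is deterministic precisely in the problematic case $\t_1<\hat t$ is what lets you apply Proposition~\ref{state} from $\h\t_1$; with the paper's random $\hat\t_1$ one would instead need to re-run the $Y$-argument by hand. So your route is the same in outline but is more careful at the one step where the paper's written estimate is, as stated, too lossy.
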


\begin{proof} Let $\hat t\in(t,T]$. For any $\e>0$, let $\h\xi(\cd)\in\sK_r^0[\hat t,T]$ such that
$$V(\hat t,r,x)\les J(\hat t,r,x;\h\xi(\cd))<V(\hat t,r,x)+\e.$$
We extend $\h\xi(\cd)$ from $[\hat t,T]$ to $[t,T]$ by letting
$$\xi(s)=\left\{\2n\ba{ll}
\ds0,\qq\q s\in[t,\hat t\,),\\
\ns\ds\h\xi(s),\qq s\in[\hat t,T].\ea\right.$$
Namely, $\xi(\cd)$ does not have impulses on $[t,\hat t\,)$. Let $X(\cd)\equiv X(\cd\,;t,r,x,\xi(\cd))$ and $\h X(\cd)\equiv X(\cd\,;\hat t,r,$ $ x,\xi(\cd))$, by \rf{|X-X|},  there exists a constant $C>0$ such that
$$\dbE\[\sup_{s\in[\hat{t},T]}|X(s)-\h X(s)|\]\les C|t-\hat t|^{\frac{1}{2}}.$$
Then
$$\ba{ll}
\ns\ds V(t,r,x)-V(\hat t,r,x)-\e\les J(t,r,x;\x(\cd))-J(\hat t,r,x;\h\xi(\cd)) \\
\ns\ds\1n\les\dbE\[|h(X(T))\1n-\1n h(\h X(T))|\1n+\2n\int_t^{\hat t}\2n|g( s,X(s))|ds \1n+\3n \int_{\hat{t}}^T\3n|g(s,X(s))\1n-\1n g(s,\h X(s))|ds\]\\
\ns\ds\les L|t-\hat t|+L(1+T)C|t-\hat t|^{1\over2},\ea$$
which implies that
$$V(t,r,x)- V(\hat t,r,x) \les C|t-\hat t|^{1\over2}.$$
Conversely, for any $\e>0$, there exists
$$\x(\cd)=\sum_{i=1}^k\x_i\chi_{[\t_i,T]}(\cd)\in\sK^0_r[t,T],$$
with $k=\k(\xi(\cd))\les\big[{T\over\d}\big]+1$ and $\dbE|\xi_i|\les C_0$ for any $1\les i\les k$ such that
$$V(t,r,x)\ges J(t,r,x;\xi(\cd))-\e.$$
Define
$$\h\xi(\cd)=\sum_{i=1}^k\x_i\chi_{[\hat\t_i,T]}(\cd)\in\sK^0_r[\hat t,T]$$
with
$$\hat\t_i=(\t_i+\hat t-t)\land T,\qq i\ges1.$$
Clearly, $\h\xi(\cd)$ is nothing but the impulse control obtained from $\x(\cd)$ by moving all the impulses at instant $\t_i$ to the (possibly later) instant $(\t_i+\hat t-t)\land T$. Let us partition the interval $I=[\hat t,T]$ as $I=I_1 \cup I_2$, with
$$I_1=[\hat t,\t_1)\cup\Big(\bigcup_{i=1}^{k-1}[\hat\t_i,\t_{i+1})\Big)\cup[\hat\t_k,T], \qq I_2=\bigcup_{i=1}^k[\hat t\vee\t_i,\hat\t_i).$$
A careful observation tells us that
\bel{xi-xi}\xi(s)-\h\xi(s)=\sum_{i=1}^k\xi_i\chi_{[\hat t\vee\t_i,\hat\t_i)}(s),\qq\xi(T)-\h\xi(T)=0.\ee
Namely, $\xi(\cd)$ and $\h\xi(\cd)$ are different only on $I_2$. Also, we note that
\bel{t>t}0\les\hat\t_i-\t_i\les\hat t-t,\qq i\ges1.\ee
Let $X(\cd)\equiv X(\cd\,;t,r,x,\xi(\cd))$ and $\h X(\cd)\equiv X(\cd\,;\hat t,r, x,\h\xi(\cd))$, by \rf{|X-X|}, we have
$$\dbE\[\sup_{s\in[\hat t,T]}|X(s)-\h X(s)|\]\les C\(|t-\hat t|^{1\over2}
+\sup_{s\in I_2}|\xi(s)|\),$$
and noting \rf{K_r^0},
$$\dbE|X(T)-\h X(T)|\les C\dbE\[|t-\hat t|^{1\over2}+\(\int_{I_2}|\xi(\t)-\h\xi(\t)|d\t\)^{1\over2}\]\les C|t-\hat t|^{1\over2}.$$
Then, by \ref{H1}--\ref{H2} and the definition of $\sK_r^0[t,T]$, we see that
$$\ba{ll}
\ns\ds V(t,r,x)+\e-V(\hat t,r,x)\ges J(t,r,x;\xi(\cd))-J(\hat t,r,x;\h\xi(\cd))\\
\ns\ds=\dbE\Big\{\int_t^{\hat t}g(\t,X(\t))d\t+\int_{\hat t}^T\big[g(\t,X(\t))-g(\t,
\h X(\t))\big]d\t\\
\ns\ds\qq\qq+h(X(T))-h(\h X(T))+\sum_{i=1}^k\big[\ell(\t_i,\xi_i)-\ell(\hat\t_i,\xi_i)\big]\Big\}\\
\ns\ds\ges-L(\hat t-t)-C|t-\hat t|^{1\over2}-C\sum_{i=1}^k(\hat\t_i-\t_i)\ges-C|t-\hat t|^{1\over2}.\ea$$
Hence, \rf{vte} follows.
\end{proof}

Finally, we prove the continuity of the value function with respect to $r$.

\begin{lemma}\label{vr} \sl Let {\rm\ref{H1}--\ref{H2}} hold. Then there exists a constant $C>0$ such that for every $(t,r,x),(t,\hat r,x)\in\sD$,
\bel{vr7}\left\{\ba{ll}
\ns0\les V(t,r,x)-V(t,\hat r,x)\les C|r-\hat r|^{1\over2},\qq\hb{~if~}r<\hat r<\d,\\
\ns\ds0\les V(t,r,x)-V(t,\hat r,x)\les C|r-\d|^{1\over2},\qq\hb{~if~}r<\d\les\hat r,\\
\ns\ds0=V(t,r,x)-V(t,\hat r,x),\qq\qq\qq\q\;\hb{~if~}\d\les r<\hat r.\ea\right.\ee
\end{lemma}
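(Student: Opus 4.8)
\noindent The three regimes reduce to two monotonicity statements and one genuine estimate. For the left inequalities and the third line: if $r<\hat r$ then the inclusion chain \rf{K} (equivalently \rf{K^0}) gives $\sK_r[t,T]\subseteq\sK_{\hat r}[t,T]$, and since neither the state equation \rf{state1} nor the cost functional \rf{J1} involves the elapsed time, $V(t,\hat r,x)=\inf_{\sK_{\hat r}[t,T]}J(t,\hat r,x;\cd)\le\inf_{\sK_r[t,T]}J(t,r,x;\cd)=V(t,r,x)$; moreover, if $\d\le r<\hat r$ then \rf{K} yields $\sK_r[t,T]=\sK_\d[t,T]=\sK_{\hat r}[t,T]$, so the two values coincide.

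For the two upper bounds I would fix $\e>0$, pick $\h\xi(\cd)=\sum_{i=1}^k\h\xi_i\chi_{[\h\t_i,T]}(\cd)\in\sK^0_{\hat r}[t,T]$ with $J(t,\hat r,x;\h\xi(\cd))<V(t,\hat r,x)+\e$ (so $k\le[T/\d]+1$ and $\dbE|\h\xi_i|\le C_0$), set $\rho:=\hat r-r$ when $r<\hat r<\d$ and $\rho:=\d-r$ when $r<\d\le\hat r$ (so $0<\rho<\d$, and $\rho=|r-\hat r|$, resp.\ $\rho=|r-\d|$), and then produce a competitor in $\sK_r[t,T]$ by shifting every impulse of $\h\xi(\cd)$ forward by $\rho$ and truncating at $T$, keeping the impulse sizes: $\xi(\cd):=\sum_{i=1}^k\h\xi_i\chi_{[\t_i,T]}(\cd)$, $\t_i:=(\h\t_i+\rho)\wedge T$. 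Admissibility of $\xi(\cd)$ is the first thing to verify: the choice of $\rho$ together with $\h\t_1\ge(t+\d-\hat r)\vee t$ gives $\h\t_1+\rho\ge(t+\d-r)\vee t$, hence $\t_1\ge(t+\d-r)\vee t$ whenever $(t+\d-r)\vee t\le T$; in the complementary degenerate case $t+\d-r>T$ one checks that $\t_i=T$ for every $i$ (and $T-\h\t_i<\rho$), and such a control, all of whose impulses sit at the terminal time, is admissible because a terminal impulse carries no lag restriction. The gap conditions \rf{lag} survive a common forward shift, and the truncation can only move impulses onto $T$, where \rf{lag} asks for no gap; $\xi_i=\h\xi_i$ is $\cF_{\t_i}$-measurable since $\h\t_i\le\t_i$; and $\dbE\sum_i\ell(\t_i,\h\xi_i)\le\dbE\sum_i\ell(\h\t_i,\h\xi_i)<\i$ by the time-monotonicity of $\ell$ in \rf{ell}.

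The comparison then goes as follows. Writing $X(\cd)=X(\cd\,;t,r,x,\xi(\cd))$ and $\h X(\cd)=X(\cd\,;t,\hat r,x,\h\xi(\cd))$, the crucial observation is that all impulses are still completed by $T$, so $\xi(T)=\h\xi(T)=\sum_i\h\xi_i$, while $\xi(\cd)-\h\xi(\cd)=-\sum_i\h\xi_i\chi_{[\h\t_i,\t_i)}(\cd)$ is supported on the pairwise disjoint intervals $[\h\t_i,\t_i)$ with $\t_i-\h\t_i\le\rho$. Applying \rf{|X-X|*} with $(\h t,\h x)=(t,x)$, $p=1$, and using $\big(\sum_i|\h\xi_i|^2\big)^{1/2}\le\sum_i|\h\xi_i|$ together with $\dbE\sum_i|\h\xi_i|\le kC_0$, one obtains $\dbE|X(s)-\h X(s)|\le C\rho^{1/2}$ for every $s\in[t,T]$ (the un-integrated $|\xi(s)-\h\xi(s)|$ term in \rf{|X-X|*} vanishes at $s=T$ and satisfies $\int_t^T\dbE|\xi(s)-\h\xi(s)|\,ds\le C\rho$), hence also $\dbE\int_t^T|X(s)-\h X(s)|\,ds\le C\rho^{1/2}$. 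Consequently the running- and terminal-cost differences between $J(t,r,x;\xi(\cd))$ and $J(t,\hat r,x;\h\xi(\cd))$ are at most $C\rho^{1/2}$ by \rf{gh}, while the impulse-cost difference $\dbE\sum_i[\ell(\t_i,\h\xi_i)-\ell(\h\t_i,\h\xi_i)]\le0$ by \rf{ell}; thus $V(t,r,x)\le J(t,r,x;\xi(\cd))\le J(t,\hat r,x;\h\xi(\cd))+C\rho^{1/2}<V(t,\hat r,x)+\e+C\rho^{1/2}$, and letting $\e\downarrow0$ gives \rf{vr7}. I expect the genuinely delicate points to be the admissibility bookkeeping in the degenerate stratum $t+\d-r>T$, and the recognition that the shift must be \emph{forward} (so that, by \rf{ell}, the impulse cost only decreases) and that truncating at $T$ preserves $\xi(T)=\h\xi(T)$ — which is precisely what makes the un-integrated jump term in \rf{|X-X|*} disappear, so that the whole discrepancy is of order $\rho^{1/2}$.
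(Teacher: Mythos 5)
Your proof is correct and follows essentially the same route as the paper's: the monotonicity and the equality in the third line come from the inclusion chain \rf{K^0}, and the upper bounds come from translating a near-optimal set of impulse times forward by $\rho$ (truncating at $T$), so that the impulse cost can only decrease by the time-monotonicity in \rf{ell} while the state discrepancy is controlled via Proposition~\ref{state} to be $O(\rho^{1/2})$ in the integrated and terminal-time senses. The only stylistic difference is that the paper reduces the case $r<\d\le\hat r$ to $\hat r=\d$ and then reuses the first case, whereas you treat both regimes uniformly through $\rho$; also, your interim claim that $\dbE|X(s)-\h X(s)|\le C\rho^{1/2}$ holds \emph{for every} $s$ is literally false on the short windows $[\h\t_i,\t_i)$ where $\xi$ and $\h\xi$ disagree (there the un-integrated jump term in \rf{|X-X|*} is only $O(1)$), but your parenthetical already corrects this, and only the $\ds\int_t^T$ bound and the $s=T$ bound are actually used, so the argument stands.
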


\begin{proof} First, we consider the case $r<\hat r\les\d$. Since $\sK_r^0[t,T]\subseteq\sK_{\hat r}^0[t,T]$, we have
$$V(t,\hat r,x)\les V(t,r,x).$$
Conversely, for any $\e>0$, let
$$\h\xi(\cd)=\sum_{i=1}^k\h\xi_i\chi_{[\hat{\t}_i,T]}(\cd)\in\sK^0_{\hat r}[t,T],$$
with $k\les\big[{T\over\d}\big]+1$, and $\dbE|\h\xi_i|\les C_0$ for any $1\les i\les k$ (see \rf{K_r^0}) such that
$$V(t,\hat r,x)\les J(t,\hat r,x;\h\xi(\cd))<V(t,\hat r,x)+\e.$$
Define
$$\xi(\cd)=\sum_{i=1}^k\h\xi_i\chi_{[\t_i,T]}(\cd)\in\sK_r^0[t,T],$$
with
$$\t_i=(\hat\t_i+\hat r-r)\land T\ges\hat\t_i,\qq 1\les i\les k.$$
Clearly, $\x(\cd)$ is nothing but the impulse control obtained from $\h\xi(\cd)$ by moving the impulse at instant $\hat\t_i$ to the (corresponding) later instant $(\hat\t_i+\hat r-r)\land T$. Keep in mind that due to the decision lag, at $\hat\t_i$, only one impulse appears. Then we write $[t,T]=U_1\cup U_2$ with
$$U_1=[t,\hat\t_1)\cup\Big(\bigcup_{i=1}^{k-1}[\t_i,\hat\t_{i+1})\Big)\cup [\hat\t_k,T],
\qq U_2=\bigcup_{i=1}^k[\hat\t_i,\t_i).$$
Similar to \rf{xi-xi}, one has
\bel{xi-xi*}\h\xi(s)-\xi(s)=\sum_{i=1}^k\h\xi_i\chi_{[\hat \t_i,\t_i)}(s),\qq\h\xi(T)-\xi(T)=0.\ee
Also,
\bel{r-r}0\les\t_i-\hat\t_i\les\hat r-r,\qq i\ges1.\ee
Now, let $X(\cd)\equiv X(\cd\,;t,r,x,\xi(\cd))$ and $\h X(\cd)\equiv X(\cd\,;t,\hat r, x,\h\xi(\cd))$. Similar to \rf{|X-X|}, we have
$$\ba{ll}
\ns\ds\dbE\[\1n\sup_{s\in[t,T]}\1n|X(s)-\h X(s)|\]\1n\les C\dbE\[\sup_{s\in U_2}|\xi(s)-\h\xi(s)|+\(\1n\int_{U_2}\1n|\xi(\t)-\h\xi(\t)|d\t\)^{1\over2}\]\\
\ns\ds\les C\dbE\[\sup_{s\in U_2}|\xi(s)-\h\xi(s)|+|r-\hat r|^{1\over2}\]\ea$$
and, again noting \rf{K_r^0}, as well as \rf{r-r},
$$\dbE|X(T)-\h X(T)|\les C\dbE\(\int_{U_2}|\xi(\t)-\h\xi(\t)|d\t\)^{1\over2}\les C|r-\hat r|^{1\over2}.$$
Then, by \ref{H1}--\ref{H2} and the definition of $\sK_r^0[t,T]$, we see that
$$\ba{ll}
\ns\ds V(t,\hat r,x)+\e-V(t,r,x)\ges J(t,\hat r,x;\h\xi(\cd))-J(t,r,x;\xi(\cd))\\
\ns\ds=\dbE\Big\{\int_t^T\big[g(\t,\h X(\t))-g(\t,X(\t))\big]d\t+h(\h X(T))-h(X(T))\\
\ns\ds\qq\qq\qq\qq\qq\qq\qq+\sum_{i=1}^k\big[\ell(\hat\t_i,\h\xi_i)-\ell(\t_i,\h\xi_i)\big]\Big\}\\
\ns\ds\ges-L\dbE\[\int_t^T|X(\t)-\h X(\t)|d\t+|X(T)-\h X(T)|\]\\
\ns\ds\ges-C\dbE\[\int_{U_2}|\xi(\t)-\h\xi(\t)|d\t+|r-\hat r|^{1\over2}\]\ges-C|r-\hat r|^{1\over2}.\ea$$
This proves the first case.

\ms

Next, we look at the third case: $\d\les r<\hat r$. By \rf{K^0}, we have $\sK_r^0[t,T]=\sK_{\hat r}^0[t,T]$. Thus,
\bel{v0}
V(t,r,x)=V(t,\hat r,x).\ee
Finally, for the second case: $r<\d\les\hat r$, we have
$$|V(t,r,x)-V(t,\hat r,x)|=|V(t,r,x)-V(t,\d,x)|\les C|t-\d|^{1\over2}.$$
This completes the proof.
\end{proof}

Note that \rf{vr7} admits the following compact form:
$$|V(t,r,x)-V(t,\hat r,x)|\les C|r\land\d-\hat r\land\d|^{1\over2}.$$
Hence, combining the above three lemmas, we obtain a proof of Theorem \ref{properties}. From the above, we also see that for any $(t,x)\in[0,T]\times\dbR^n$, $V$ decreases with respect to $r$ when $r\in[0,\d)$ and keeps as a constant with respect to $r$ when $r\in[\d,T]$, i.e.
\bel{V(d)}V(t,r,x)=V(t,\d,x)\equiv V^0(t,x),\qq\forall(t,r,x)\in[0,T]\times[\d,T]\times\dbR^n.\ee
which means that the optimal value of cost will be smaller if the time we have to wait is shorter and the optimal value of cost will be the same if we don't have to wait at all.
We let
\bel{hC}\ba{ll}
\ns\ds \h C(\sD)=\Big\{v\in C(\sD)\bigm|v(t,r,x)\les L(T+1), (t,r,x)\in \sD, \\
\ns\ds\qq\qq\qq v(t,r,x)=v(t,\d,x), (t,r,x)\in[0,T]\times[\d,T]\times\dbR^n\Big\}\ea\ee
which is a class of functions that the value function $V(\cd\,,\cd\,,\cd)$ belongs to.
The following result will be used below.

\begin{corollary}\label{N0} \sl Let {\rm(H1)--(H2)} hold. Let
\bel{N[V]}N[V](t,0,x)=\inf_{\xi\in K}\big\{V(t,0,x+\xi)+\ell(t,\xi)\big\},\qq(t,x)\in[0,T]\times\dbR^n.\ee
Then $(t,x)\mapsto N[V](t,0,x)$ is continuous.

\end{corollary}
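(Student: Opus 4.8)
The plan is to show that $N[V](\cdot\,,0,\cdot)$ inherits continuity from the joint continuity of $V$ and $\ell$, with the only real issue being that the infimum in \rf{N[V]} is over the unbounded set $K$, so one must first cut it down to a compact set uniformly in $(t,x)$ ranging over compacts. First I would fix a bounded region: let $(t,x),(\hat t,\hat x)$ lie in a compact set $Q\subset[0,T]\times\dbR^n$, say $|x|,|\hat x|\les R$. Using the coercivity bound $\ell(t,\xi)\ges\ell_0+\a|\xi|$ from \rf{ell} together with the boundedness $|V(t,0,x+\xi)|\les L(T+1)$ from \rf{S1}, for any $\xi\in K$ with $|\xi|$ large we get
$$V(t,0,x+\xi)+\ell(t,\xi)\ges -L(T+1)+\ell_0+\a|\xi|>L(T+1)\ges V(t,0,x),$$
the last quantity being an admissible value of the bracket in \rf{N[V]} (take the zero element of the cone $K$, noting $0\in K$ and $\ell(t,0)$ is finite). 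Hence there is a radius $\rho=\rho(R)$, independent of $(t,x)\in Q$, such that
$$N[V](t,0,x)=\inf_{\xi\in K,\ |\xi|\les\rho}\big\{V(t,0,x+\xi)+\ell(t,\xi)\big\}.$$

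Next I would invoke uniform continuity on compacts. By Theorem \ref{properties}, $V$ is (jointly, uniformly) continuous, in particular $(t,x)\mapsto V(t,0,x+\xi)$ is uniformly continuous on $Q\times\{|\xi|\les\rho\}$; and $\ell$ is continuous, hence uniformly continuous on $[0,T]\times(K\cap\{|\xi|\les\rho\})$, the latter set being compact. Therefore the function
$$\Phi(t,x,\xi):=V(t,0,x+\xi)+\ell(t,\xi)$$
is uniformly continuous on $Q\times(K\cap\{|\xi|\les\rho\})$: given $\e>0$ there is $\d'>0$ so that $|t-\hat t|+|x-\hat x|<\d'$ forces $|\Phi(t,x,\xi)-\Phi(\hat t,\hat x,\xi)|<\e$ for all such $\xi$. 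Taking the infimum over $\xi\in K\cap\{|\xi|\les\rho\}$ on both sides and using that $|\inf_\xi a(\xi)-\inf_\xi b(\xi)|\les\sup_\xi|a(\xi)-b(\xi)|$, we get $|N[V](t,0,x)-N[V](\hat t,0,\hat x)|\les\e$ whenever $|t-\hat t|+|x-\hat x|<\d'$. Since $(t,x)\in Q$ with $Q$ an arbitrary compact, this proves continuity of $N[V](\cdot\,,0,\cdot)$ on $[0,T]\times\dbR^n$.

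The main obstacle is the first step, the uniform (in $(t,x)$ over compacts) truncation of the infimum to a ball in $K$; once that is in place the rest is the standard fact that an infimum of a uniformly equicontinuous family is continuous. One should just be slightly careful that $K$ being a closed convex cone contains $0$, so the zero impulse is always a competitor and the bound $N[V](t,0,x)\les V(t,0,x)+\ell(t,0)\les L(T+1)+\ell(t,0)$ is legitimate; since $\ell(\cdot,0)$ is continuous on the compact $[0,T]$ it is bounded there, which is all that is needed to carry out the comparison above. (In fact one may note that more is true: the same argument, using the Hölder estimate \rf{S2} for $V$ in place of mere continuity, yields local Hölder-$\tfrac12$ continuity of $N[V](\cdot\,,0,\cdot)$, but continuity is all that is claimed.)
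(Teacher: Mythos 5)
Your argument is correct and essentially identical to the paper's own proof: both use the coercivity of $\ell$ together with the global bound $|V|\les L(T+1)$ to truncate the infimum to a compact ball $K\cap B(0,\rho)$ (with $\rho$ uniform in $(t,x)$), and then conclude by uniform continuity of $(t,x,\xi)\mapsto V(t,0,x+\xi)+\ell(t,\xi)$ on compacts plus the elementary fact that an infimum of a uniformly equicontinuous family is continuous. The only blemish is that your displayed chain compares against $V(t,0,x)$ rather than the genuine competitor value $V(t,0,x)+\ell(t,0)$; your parenthetical remark at the end supplies exactly the needed correction, so the proof as a whole is sound.
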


\begin{proof} \rm By the coercivity condition in \rf{ell} for $\xi\mapsto\ell(t,\x)$, we see that for any $R>0$, there exists a $C=C_R>0$ such that
$$N[V](t,0,x)=\inf_{\xi\in K\cap B(0,C_R)}\big\{V(t,0,x+\xi)+\ell(t,\xi)\big\},$$
where $B(0,C_R)$ is the closed ball centered at 0 with radius $C_R$. Now, it is clear that $(t,x)\mapsto V(t,0,x+\xi)+\ell(t,\xi)$ is uniformly continuous on $[0,T]
\times B(0,R)$ with the continuity uniform in $\xi\in K\cap B(0,C_R)$. Hence,
our conclusion follows. \end{proof}

\section{Dynamic Programming Principle and HJB Equation}\label{Sec:Bellman's Dynamic Programming}

In this section, we first establish a Bellman dynamic programming principle. Then we derive the corresponding HJB equation for the value function. The cases $r\in[0,\d)$ and $r\in[\d,T)$ will be discussed separately. Note that for the initial triplet $(t,r,x)\in[0,T]\times[0,T]\times\dbR^n$, if $r\in[0,\d)$, we must wait at least $\d-r$ to make the first impulse after time $t$, which means that there is no impulse during interval $[t,t+\d-r)$; if $r\in[\d,T]$, we can make an impulse, say, $\xi$, to the system at any time, say, $\t_1\ges t$. After such an impulse is made, the new initial triplet becomes $(\t_1,0,X(\t_1-)+\xi)$, with the elapsed time $r=0<\d$. Hence, we have the following result.

\begin{theorem}\label{max1} \sl {\rm(i)} Let $(t,r,x)\in[0,T)\times[0,\d)\times\dbR^n$, then for any $s\in[t,t+\d-r)$,
\bel{DPP1}V(t,r,x)= \dbE\Big\{V\big(s,r+s-t, X^0(s;t,x)\big)+\int_t^s g\big(\t, X^0(\t;t,x)\big)d\t\Big\},\ee
where $X^0(\cd\,;t,x)\equiv X^0(\cd)$ is the solution to the following:
\bel{y0}X^0(s)=x+\int_t^sb(\t,X^0(\t))d\t+\int_t^s\si(\t,X^0(\t))dW(\t),\q s\in[t,T].\ee

{\rm(ii)} Let $(t,r,x)\in[0,T)\times[\d,T)\times\dbR^n$, then
\bel{V^0<}V^0(t,x)\les\dbE\Big\{V^0\big(s,X^0(s;t,x)\big)+\int_t^sg\big(\t,X^0(\t;t,x)\big)d\t\Big\},\q\forall s\in[t,T],\ee
and
\bel{N}V^0(t,x)\les\inf_{\x\in K}\Big\{V(t,0,x+\x)+\ell(t,\x)\Big\}\equiv N[V](t,0,x).\ee
If, at some point $(t,r,x)\in[0,T)\times[\d,T)\times\dbR^n$, a strict inequality holds in \rf{N}, then a $t_0\in (t,T]$ exists such that
\bel{DPP2}V^0(t,x)=\dbE\Big\{V^0\big(s,X^0(s;t,x)\big)+\int_t^sg\big(\t,X^0(\t;t,x)\big)d\t\Big\},\q\forall s\in[t, t_0).\ee

\end{theorem}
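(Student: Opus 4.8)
The plan is to prove \eqref{DPP1}--\eqref{DPP2} by the usual two–sided scheme for a dynamic programming principle: one inequality by conditioning on $\cF_s$, the other by a measurable–selection/pasting construction in the spirit of \cite{Fleming-Souganidis 1989}; the ``obstacle'' inequality \eqref{N} and the ``flow'' identity \eqref{DPP2} then come from comparison with explicit sub‑optimal controls, combined with the continuity of $V$ (Theorem \ref{properties}) and of $N[V]$ (Corollary \ref{N0}) and with the relation $V(\cd\,,r,\cd)=V^0(\cd\,,\cd)$ for $r\ges\d$ from \eqref{V(d)}.

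\textbf{Part (i).} The inequality ``$\ges$'' is immediate: for $\x(\cd)\in\sK_r[t,T]$ and $s<t+\d-r$ no impulse can occur on $[t,s]$, so $X(\cd)=X^0(\cd)$ there, the elapsed time at $s$ equals $r+(s-t)$, the restriction $\x(\cd)|_{[s,T]}$ lies in $\sK_{r+s-t}[s,T]$, and conditioning on $\cF_s$ gives $J(t,r,x;\x(\cd))=\dbE\{\int_t^sg(\t,X^0(\t))d\t+J(s,r+s-t,X^0(s);\x(\cd)|_{[s,T]})\}\ges\dbE\{\int_t^sg(\t,X^0(\t))d\t+V(s,r+s-t,X^0(s))\}$; taking the infimum over $\x(\cd)$ proves it. For ``$\les$'', given $\e>0$ I would partition $\dbR^n$ into small Borel cells $\{A_j\}$ with reference points $y_j$, pick $\e$-optimal controls $\x^j(\cd)\in\sK_{r+s-t}[s,T]$ for $V(s,r+s-t,y_j)$, and let $\x(\cd)$ carry no impulse on $[t,s)$ and equal $\sum_j\chi_{\{X^0(s)\in A_j\}}\x^j(\cd)$ on $[s,T]$; then $\x(\cd)\in\sK_r[t,T]$ (its first impulse is $\ges(t+\d-r)\vee s$, so \eqref{firsttime}--\eqref{lag} hold), and $V(t,r,x)\les J(t,r,x;\x(\cd))\les\dbE\{\int_t^sg(\t,X^0(\t))d\t+V(s,r+s-t,X^0(s))\}+C\e+\omega(\text{mesh})$ for a modulus $\omega$ of continuity of $V$; let the mesh $\to0$, then $\e\to0$.

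\textbf{Part (ii), the inequalities.} For $V^0(t,x)\les\dbE\{\int_t^sg(\t,X^0(\t))d\t+V^0(s,X^0(s))\}$ one runs the ``$\les$'' argument of (i) with $s$ now ranging up to $T$: since $r\ges\d$, not impulsing keeps the elapsed time $\ges\d$, so $V(s,r+s-t,\cd)=V^0(s,\cd)$ by \eqref{V(d)}; applying this at $(s_1,X^0(s_1))$ and using the Markov property also shows that $s\mapsto\dbE\{\int_t^sg(\t,X^0(\t))d\t+V^0(s,X^0(s))\}$ is nondecreasing, a fact needed below. For \eqref{N}: for each $\x\in K$ the control ``make the impulse $\x$ at time $t$, then follow any $\eta(\cd)\in\sK_0[t,T]$'' belongs to $\sK_r[t,T]$ (admissible because $r\ges\d$), its cost equals $\ell(t,\x)+J(t,0,x+\x;\eta(\cd))$, and minimizing over $\eta(\cd)$ and then over $\x$ gives $V^0(t,x)=V(t,r,x)\les N[V](t,0,x)$.

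\textbf{Part (ii), the flow identity \eqref{DPP2}, and the main obstacle.} Put $\gamma:=N[V](t,0,x)-V^0(t,x)>0$. By continuity of $V^0$ and of $(s,y)\mapsto N[V](s,0,y)$ there are $\rho>0$ and $t_0\in(t,T]$ with $V^0(s,y)+\frac{\gamma}{2}\les N[V](s,0,y)$ on $[t,t_0]\times\bar B(x,\rho)$. Fix $s\in[t,t_0)$, take $\x^n(\cd)\in\sK_\d[t,T]$ with $J(t,\d,x;\x^n(\cd))<V^0(t,x)+\frac1n$, let $\t_1^n$ be its first impulse time and $\sigma=\inf\{u\ges t:|X^0(u)-x|\ges\rho\}$. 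Splitting the cost of $\x^n(\cd)$ at $\t_1^n\wedge\sigma\wedge s$: on $\{\t_1^n\les\sigma\wedge s\}$ the remaining cost conditionally exceeds $\ell(\t_1^n,\x^n_1)+V(\t_1^n,0,X^0(\t_1^n)+\x^n_1)\ges N[V](\t_1^n,0,X^0(\t_1^n))\ges V^0(\t_1^n,X^0(\t_1^n))+\frac{\gamma}{2}$ (the gap applies since $(\t_1^n,X^0(\t_1^n))\in[t,t_0]\times\bar B(x,\rho)$), while on the complement it conditionally dominates $V^0$ at the then–current point (elapsed time $\ges\d$); together with the inequality of part (ii) extended to the stopping time $\t_1^n\wedge\sigma\wedge s$ this yields $J(t,\d,x;\x^n(\cd))\ges V^0(t,x)+\frac{\gamma}{2}\dbP(\t_1^n\les\sigma\wedge s)$, so $\dbP(\t_1^n\les\sigma\wedge s)\to0$; discarding the extra term then gives $V^0(t,x)\ges\dbE\{\int_t^{\sigma\wedge s}g(\t,X^0(\t))d\t+V^0(\sigma\wedge s,X^0(\sigma\wedge s))\}$, and the reverse inequality is again part (ii) at $\sigma\wedge s$, so the dynamic programming identity holds with the process stopped at $\sigma$. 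Since $(t,x)$ lies in the open set $\{V^0<N[V]\}$ one has $\sigma>t$ a.s. and $\dbP(\sigma\les s)\to0$ as $s\downarrow t$; the last—and most delicate—step is to remove the stopping at $\sigma$, i.e. to control the contribution of the paths that leave $\bar B(x,\rho)$ before $s$: one re‑applies the stopped identity at the exit point $(\sigma,X^0(\sigma))$, which still lies in the region where the gap persists, iterating, and uses the monotonicity above together with the continuity of $V^0$ to arrive at \eqref{DPP2} on $[t,t_0)$ (after possibly shrinking $t_0$). The two places I expect to cost the most effort are exactly this de‑localization and the verification that the pasted controls in (i)--(ii) respect the decision–lag constraint \eqref{lag}.
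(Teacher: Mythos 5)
Your proofs of (i) and of the two inequalities in (ii) are essentially the paper's: the ``$\ges$'' side by conditioning on $\cF_s$ after noting that the first impulse time is forced beyond $s$, the ``$\les$'' side by pasting (the paper glosses the measurable selection that you spell out with a cell decomposition, but it is the same idea), and \eqref{N} by prepending an immediate impulse $\xi$ to an arbitrary $\eta(\cd)\in\sK_0[t,T]$. So those parts are fine.

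For \eqref{DPP2}, however, you take a genuinely different route from the paper, and it has a real gap at the ``de-localization'' step that you yourself flag. What your argument actually delivers is the identity at the \emph{stopped} time $\sigma\wedge s$, namely $V^0(t,x)=\dbE\{\int_t^{\sigma\wedge s}g+V^0(\sigma\wedge s,X^0(\sigma\wedge s))\}$. To pass to a fixed $s$ you would need, on the event $\{\sigma<s\}$, the inequality $V^0(\sigma,X^0(\sigma))\ges\dbE[\int_\sigma^s g+V^0(s,X^0(s))\mid\cF_\sigma]$; but the only unconditional information available there is \eqref{V^0<} applied at $(\sigma,X^0(\sigma))$, which goes the wrong way. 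Iterating the stopped identity from the exit point does not close the gap either: the open set $\{V^0<N[V]\}$ has no uniform interior ball, so the radii of the successive balls can shrink to zero and the iterated exit times need not escape past $s$; and with bounded but nondegenerate $\si$ one has $\dbP(\sigma<s)>0$ for every $s>t$, so there is no way to choose $\rho$ or $t_0$ that makes the bad event negligible. The monotonicity of $s\mapsto\dbE\{\int_t^sg+V^0(s,X^0(s))\}$ that you invoke also runs the wrong way here, since $\sigma\wedge s\les s$ only yields \eqref{V^0<} again. The paper avoids all of this by arguing by contradiction at the single point $(t,x)$: if \eqref{DPP2} fails for every $t_0>t$, any minimizing sequence $\xi^\e$ has $\t_1^\e$ shrinking to $t$, and then $V(t,r,x)+\e\ges\dbE[\int_t^{\t_1^\e}g+N[V](\t_1^\e,0,X^0(\t_1^\e))]$ passes, via continuity of $N[V]$ (Corollary~\ref{N0}) and $X^0(\t_1^\e)\to x$, to $V^0(t,x)\ges N[V](t,0,x)$, contradicting the strict inequality in \eqref{N}; no localization in space and no removal of a stopping time is required. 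I would either adopt that contradiction argument or, if you want to keep your structure, replace the fixed-$s$ de-localization by an honest stopping-time formulation of the conclusion and then prove the fixed-time statement separately.

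Two smaller remarks: in part (ii) the control $\eta(\cd)$ that follows the impulse $\xi$ at time $t$ must be taken in $\sK_0[t,T]$ (elapsed time reset to $0$), which you do correctly; and in part (i) the first impulse of your pasted control is $\ges t+\d-r>s$, so it indeed lies in $\sK_r[t,T]$, also correct.
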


\begin{proof} (i) Fix any $s\in[t,t+\d-r)$. For any $\xi(\cd)\in\sK^0_{r+s-t}[s,T]$, we may naturally extend it to a $\h\x(\cd)\in\sK^0_r[t,T]$ by not making impulses on $[t,s)$, followed
by $\xi(\cd)$. Then
$$\ba{ll}
\ns\ds V(t,r,x)\les J(t,r,x;\h\xi(\cd))\\
\ns\ds\qq\qq=\dbE\Big\{\int_t^s g(\t,X^0(\t;t,x))d\t+ J(s,r+ s- t,X^0(s;t,x);\xi(\cd))
\Big\}.\ea$$
Thus, one has
 $$V(t,r,x)\les\dbE\Big\{\int_t^sg(\t,X^0(\t;t,x))d\t+V(s,r+s-t,X^0(s;t,x))
\Big\}.$$
On the other hand, for any $\e>0$, there exists a
$$\x(\cd)= \sum\limits_{i=1}^k \x_i \chi_{[\t_i,T]}(\cd) \in \sK_r^0[t,T]$$
such that
$$V(t,r,x)+\e \ges J(t,r,x;\x(\cdot)).$$
Note that $\t_1\ges t+\d-r$. Let $s\in [t,t+\d-r)$ and
$$\h\x(\cd) = \x(\cd)|_{[s,T]}=\sum_{i=1}^k\x_i\chi_{[\t_i,T]}(\cd)\in \sK_{r+s-t}^0[s,T].$$
Then
$$ \ba{ll}
V(t,r,x)+\e  \ges J(t,r,x;\x(\cdot))\\
\ns\ds=\dbE\Big\{\int_t^Tg\big(\t,X(\t;t,x,\x(\cd))\big)d\t+\sum_{i=1}^k\ell
(\t_i,\x_i)+h\big(X(T;t,x,\x(\cd))\big)\Big\}\\
\ns\ds=\dbE\Big\{\int_t^sg\big(\t,X(\t;t,x,\x(\cd))\big)d\t+\dbE\[\int_s^T
g\big(\t,X(\t;t,x,\x(\cd))\big)d\t\\
\ns\ds\qq\qq\qq\qq\qq\qq\qq+\sum_{i=1}^k\ell(\t_i,\x_i)+h\big( X(T;t,x,\x(\cd))\big)\bigm|\cF_s\]\Big\}\ea$$
$$\ba{ll}
\ns\ds=\dbE\Big\{\int_t^sg\big(\t,X^0(\t;t,x)\big)d\t+\dbE\[\int_s^Tg\big(\t,
X(\t;s,X^0(s;t,x),\h\x(\cd))\big)d\t\\
\ns\ds\qq\qq\qq\qq\qq+\sum_{i=1}^k\ell(\t_i,\x_i)+ h\big(X(T;s,X^0(s;t,x),\h\x(\cd))\big)\bigm|\cF_s\]\Big\}\\
\ns\ds=\dbE\Big\{\int_t^sg\big(\t,X^0(\t;t,x)\big)d\t+J\big(s,r+s-t,X^0(s;t,x);
\h\x(\cd)\big)\Big\}\\
\ns\ds\ges\dbE\Big\{\int_t^sg\big(\t,X^0(\t;t,x)\big)d\t+V\big(s,r+s-t,
X^0(s;t,x)\big)\Big\}.\ea$$
Sending $\e\to0$, we obtain the other direction of the inequality. This completes the proof of (i).

\ms

(ii) First of all, for $(t,r,x)\in[0,T)\times[\d,T)\times\dbR^n$, and $s\in[t,T]$, take any $\h\x(\cd)\in\sK_{s-t+r}^0[s,T]$, we trivially extend it to $\bar\x(\cd)\in\sK_r^0[t,T]$ by making no impulses on $[t,s)$. One has
$$\ba{ll}
\ns\ds V^0(t,x)\equiv V(t,r,x)\les J(t,r,x;\bar\xi(\cd))\\
\ns\ds\qq\q~=\dbE\[\int_t^sg(\t,X^0(\t;t,x))d\t+J\big(s,X^0(s;t,x);\h\x(\cd)\big)\].\ea$$
Hence, by taking infimum over $\h\x(\cd)\in\sK_r^0[s,T]$, we obtain (noting $s-t+r>\d$)
$$\ba{ll}
\ns\ds V^0(t,x)\1n\equiv \1n V(t,r,x)\1n\les\1n\dbE\[\int_t^s \1n g(\t,X^0(\t;t,x))d\t \1n + \1n V(s,s-t+r,X^0(s;t,x))\]\\
\ns\ds\qq\qq=\dbE\[\int_t^sg(\t,X^0(\t;t,x))d\t+V^0(s,X^0(s;t,x))\],\ea$$
which gives \rf{V^0<}. Next, for any $\h\x(\cd)\in\sK^0_{s-t+r}[t,T]$, we construct
$$\x(\cd) = \x \chi_{[t,T]}(\cd)+\h\x(\cd),$$
which is the impulse control that has an impulse $\x$ at instant $t$, followed by $\h{\x}(\cd)$. Then one has $\x(\cd) \in \sK^0_{r}[t,T]=\sK^0_{\d}[t,T]$ with $r \ges \d$. Consequently,
$$V^0(t,x)\equiv V(t,r,x)=V(t,\d,x)\les J\big(t,\d,x; \x(\cd)\big)=J\big(t,0,x+\x;\h\x(\cd)\big)+\ell(t,\x).$$
Since $\h\x(\cd)$ is arbitrary, one has
$$V^0(t,x)\les V(t,0,x+\x)+\ell(t,\x),\qq\forall\x\in K,$$
which leads to \rf{N}. Suppose a strict inequality holds in \rf{N} at some point $(t,r,x)\in\sD[\d,T]$. We claim that \rf{DPP2} holds for some $t_0\in(t,T]$, i.e., there exists a minimizing sequence $\xi^\e(\cd)\in\sK^0_r[t,T]$ such that the first
impulse time $\t_1^\e\ges t_0$. Suppose \rf{DPP2} fails, which means that for
any minimizing sequence $\x^\e(\cd)\in\sK_r^0[t,T]$, the first impulse time $\t_1^\e$ satisfies
$$\lim_{\e\to0}\t_1^\e=t,\qq \lim_{\e\to0}J(t,r,x;\xi^\e(\cd))=V(t,r,x)
\equiv V^0(t,x).$$
Consequently, we may assume that
$$\ba{ll}
\ns\ds V(t,r,x)+\e\ges J(t,r,x;\xi^\e(\cd))\\
\ns\ds=\dbE\[\int_t^{\t_1^\e}g(\t,X^0(\t;t,x))d\t
+\ell(\t_1^\e,\xi_1^\e)+J(\t_1^\e,X^0(\t_1^\e;t,x)+\xi_1^\e;\h\xi^\e(\cd))\]\\
\ns\ds\ges\dbE\[\int_t^{\t_1^\e}g(\t,X^0(\t;t,x))d\t
+\ell(\t_1^\e,\xi_1^\e)+V\big(\t_1^\e,0,X^0(\t_1^\e;t,x)+\xi_1^\e\big)\]\\
\ns\ds\ges\dbE\[\int_t^{\t_1^\e}g(\t,X^0(\t;t,x))d\t
+N[V]\big(\t_1^\e,0,X^0(\t_1^\e;t,x)\big)\].\ea$$
Sending $\e\to0$, using the continuity of $(t,x)\mapsto N[V](t,0,x)$, we obtain
$$V^0(t,x)=V(t,r,x)\ges N[V](t,0,x),$$
which is a contradiction, proving \rf{DPP2}. \end{proof}

Now let us introduce the following Hamiltonian:
\bel{H}\ba{ll}
\ns\ds H(t,x,p,P)=\lan b(t,x),p\ran+{1\over2}\tr{[\si(t,x)^\top P\si(t,x)]}+g(t,x),\\
\ns\ds\qq\qq\qq\qq\qq\qq\q(t,x,p,P)\in[0,T]\times\dbR^n\times\dbR^n\times\dbS^n.\ea\ee
We easily obtain the Hamilton-Jacobi-Bellman equations for our value function as follows:

\begin{theorem} \sl Suppose the value function $V(\cd\,,\cd\,,\cd)$ is smooth.
Then with \rf{V(d)}, the following system is satisfied:
\bel{HJB1}\left\{\2n\ba{ll}
\ds V_t(t,r,x)+V_r(t,r,x)+H(t,x,V_x(t,r,x),V_{xx}(t,r,x))=0,\\
\ns\ds\qq\qq\qq\qq\qq\qq\qq\q(t,r,x)\in
[0,T)\times[0,\d)\times\dbR^n,\\
\ns\ds V(T,r,x)\1n=\1n\min\1n\Big\{\1n h(x),\inf_{\x\in K}\{ h(x\1n+\1n\x)\1n+\1n\ell(T,\x)\}\1n\Big\},~\1n(r,x)\1n\in\1n[0,\d)\1n\times\1n\dbR^n,\\
\ns\ds V(t,\d-,x)=V^0(t,x),\qq\qq\qq\qq\qq(t,x)\in[0,T]\times\dbR^n.
\ea\right.\ee
\bel{HJB2}\left\{\ba{ll}
\ds\min\Big\{V^0_t(t,x)\1n+\1n H(t,x,V^0_x(t,x),V^0_{xx}(t,x)),\\
\ns\ds\qq\qq\q N[V](t,0,x)\1n-\1n V^0(t,x)\Big\}=0,\q(t,x)\in[0,T)\times\dbR^n,\\
\ns\ds V^0(T,x)=\min\Big\{h(x),\inf_{\x\in K}\{h(x+\x)+\ell(T,\x)\}\Big\},\qq\q x\in\dbR^n.\ea\right.\ee

\end{theorem}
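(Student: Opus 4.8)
The plan is to derive each of the two HJB systems by carefully exploiting the dynamic programming principle established in Theorem \ref{max1}, treating the regions $r\in[0,\d)$ and $r\in[\d,T)$ separately, exactly as the DPP itself is split.

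For the first equation \rf{HJB1}, I would start from the identity \rf{DPP1}, which holds for $(t,r,x)\in[0,T)\times[0,\d)\times\dbR^n$ and all $s\in[t,t+\d-r)$. Assuming $V$ is smooth, I apply It\^o's formula to $\t\mapsto V(\t,r+\t-t,X^0(\t;t,x))$ on $[t,s]$; note that the $r$-argument moves at unit speed, so the chain rule produces the extra term $V_r$. Taking expectations, the martingale part vanishes, and \rf{DPP1} becomes
$$0=\dbE\int_t^s\big[V_t+V_r+\lan b,V_x\ran+\ts\frac12\tr(\si^\top V_{xx}\si)+g\big](\t,r+\t-t,X^0(\t;t,x))\,d\t.$$
Dividing by $s-t$ and letting $s\da t$ gives $V_t+V_r+H(t,x,V_x,V_{xx})=0$ pointwise, which is the first line of \rf{HJB1}. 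The boundary condition at $r=\d$ is just the continuity statement \rf{V(d)}, i.e.\ $V(t,\d-,x)=V^0(t,x)$, already proved in Section 2. For the terminal condition at $t=T$: at $t=T$ the only remaining freedom is whether to make a single impulse at $T$ (allowed with no decision lag by \rf{lag}); comparing $J$ for the trivial control (giving $h(x)$) against making one impulse $\x\in K$ at $T$ (giving $h(x+\x)+\ell(T,\x)$) and optimizing yields $V(T,r,x)=\min\{h(x),\inf_{\x\in K}[h(x+\x)+\ell(T,\x)]\}$, independent of $r$.

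For the second equation \rf{HJB2}, which lives on $r\in[\d,T)$ where $V=V^0$, I would combine the three parts of Theorem \ref{max1}(ii). Inequality \rf{V^0<} together with the It\^o argument above (now without any $V_r$ term, since $r$ stays in the flat region $[\d,T]$) gives $V^0_t+H(t,x,V^0_x,V^0_{xx})\ges0$ in the viscosity/classical sense. Inequality \rf{N} gives $N[V](t,0,x)-V^0(t,x)\ges0$. So the $\min$ in \rf{HJB2} is always $\ges0$. For the reverse, suppose at a point the obstacle inequality is strict, $N[V](t,0,x)>V^0(t,x)$; then Theorem \ref{max1}(ii) supplies $t_0>t$ and the \emph{equality} \rf{DPP2} on $[t,t_0)$, and the same It\^o computation now forces $V^0_t+H(t,x,V^0_x,V^0_{xx})=0$ at that point, so the $\min$ equals zero. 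If instead the obstacle inequality is an equality, the second argument of the $\min$ vanishes, so again the $\min$ is zero. Either way \rf{HJB2} holds. The terminal condition $V^0(T,x)=\min\{h(x),\inf_{\x\in K}[h(x+\x)+\ell(T,\x)]\}$ is the $r\ges\d$ specialization of the same terminal analysis used for \rf{HJB1}, consistent with \rf{V(d)}.

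The main obstacle, and the only genuinely delicate point, is justifying the pointwise passage to the limit $s\da t$ in the It\^o identities: one needs the integrand $\t\mapsto[V_t+V_r+\lan b,V_x\ran+\frac12\tr(\si^\top V_{xx}\si)+g]$ evaluated along $(\t,r+\t-t,X^0(\t))$ to be right-continuous at $\t=t$, and one needs to know $X^0(\t;t,x)\to x$ in $L^2$ as $\t\da t$ (immediate from \ref{H1} and \rf{y0}) together with continuity of the second derivatives of $V$ — which is exactly why the statement hypothesizes $V$ smooth. Granting smoothness, the limit is routine by dominated convergence using the boundedness of $b,\si$ and continuity of the coefficients. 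A secondary subtlety in the $r\in[\d,T)$ case is making sure the It\^o expansion of $V^0(\t,X^0(\t))$ never picks up an $r$-derivative: this is legitimate precisely because, by \rf{V(d)}, $V(\t,r+\t-t,\cd)=V^0(\t,\cd)$ for all small $\t-t$ when $r\ges\d$, so along the trajectory the $r$-slot contributes nothing.
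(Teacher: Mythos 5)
Your proof is correct and follows essentially the same route as the paper: derive the first equation of \rf{HJB1} from \rf{DPP1} via It\^o's formula (with the elapsed-time slot contributing the extra $V_r$ term), identify the $r=\d$ boundary condition with \rf{V(d)}, and obtain \rf{HJB2} by combining the inequalities \rf{V^0<} and \rf{N} with the conditional equality \rf{DPP2} when the obstacle is strictly inactive. Your additional remarks on the $s\da t$ limit and on why no $V_r$ term appears in the $r\ges\d$ region are sound clarifications of points the paper leaves implicit, but they do not change the argument.
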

\begin{proof} Let us first prove that $V(\cd\,,\cd\,,\cd)$ satisfies \rf{HJB1}. Fix any $(t,r,x)\in[0,T]\times[0,\d]\times\dbR^n$, and let $X^0(\cd)$ be the state trajectory defined by \rf{y0}. By \rf{DPP1} with $s\downarrow t$ and It\^o's formula, we obtain
$$\ba{ll}
\ns\ds0={\dbE\big\{V(s,r+s-t,X^0(s))-V(t,r,x)\big\}\over s-t}+{1\over s-t}\dbE\int_t^sg(\t,X^0(\t))d\t\\
\ns\ds\q={1\over s-t}\dbE\int_t^s\big\{V_t(\t,r+\t-t,X^0(\t))+V_r(\t,r+\t-t,X^0(\t))\\
\ns\ds\qq\q+H\big(\t,X^0(\t),V_x(\t,r+\t-t,X^0(\t)),V_{xx}(\t,r+\t-t, X^0(\t))\big)\big\}d\t\\
\ns\ds\q\to V_t(t,r,x)+V_r(t,r,x)+H(t,x,V_x(t,r,x),V_{xx}(t,r,x)).\ea$$
By \rf{v0} and the continuity of $V(\cd\,,\cd\,,\cd)$ with respect to $r$, one has
$$V(t,\d-,x)=V(t,\d,x)=V^0(t,x).$$
The terminal condition at time $T$ comes from the assumption that impulse can be made at terminal without decision lag. Now, we show that $V^0(t,x)\equiv V(t,r,x)$ (for $r\in [\d,T]$) satisfies \rf{HJB2}. From Theorem \ref{max1} (ii), using It\^o's formula, we have
\bel{V_t+H>0}\2n\ba{ll}
\ns\ds0\1n\les\1n V^0_t(t,x)\1n+\1n\lan V_x^0(t,x),b(t,x)\ran\1n+\1n{1\over2}\tr\1n\big[\si(t,x)^\top\1n V^0_{xx}(t,x)\si(t,x)\big]\2n+\1n g(t,x)\\
\ns\ds\q\2n\equiv V^0_t(t,x)+H(t,x,V^0_x(t,x),V^0_{xx}(t,x)),\ea\ee
and
\bel{N-V>0}V^0(t,x)\les N[V](t,0,x).\ee
On the other hand, by the last part of Theorem \ref{max1}, we see that when a strict
inequality holds in \rf{N-V>0}, then the equality in \rf{V_t+H>0} holds. Hence, the equation in \rf{HJB2} holds. Finally, the terminal condition in \rf{HJB2} hold by definition. This completes the proof. \end{proof}

\ms

Note that \rf{HJB1} and \rf{HJB2} are coupled.  The coupling appears at the following places: The value $V(t,\d,x)$ is equal to $V^0(t,x)$, and the obstacle $N[V](t,0,x)$ depends on $V(t,0,\cd)$. We may make a comparison between our \rf{HJB1}--\rf{HJB2} and \rf{v^0}--\rf{v^1}. We see that although our \rf{HJB2} looks similar to \rf{v^0}, they are still quite different in a number of places. On the other hand, our \rf{HJB1} is not comparable with \rf{v^1} since the appearance of $V_r(t,r,x)$ in our equation. The main reason is that we have carefully taken into account of the elapsed time $r$, which was essentially overlooked in \cite{Bruder-Pham 2009}.

 To make the above easy to solve mathematically, we give an equivalent system to \rf{HJB1} and \rf{HJB2} as follows.
\begin{theorem} \sl Suppose the value function $V(\cd\,,\cd\,,\cd)$ is smooth.
Then, with \rf{S1} and \rf{V(d)}, the following system is satisfied:
\bel{HJB3}\left\{\2n\ba{ll}
\ds \min\Big\{V_t(t,r,x)+V_r(t,r,x)+ H(t,x,V_x(t,r,x),V_{xx}(t,r,x)),\\
\ns\ds\qq~\h N[V](t,r,x)\1n-\1n V(t,r,x)\1n\Big\}\2n=\1n0,\q(t,r,x)\1n\in\1n
[0,T)\1n\times\1n[0,T)\1n\times\1n\dbR^n,\\
\ns\ds V(T,r,x)=\min\Big\{h(x),\inf_{\x\in K}\{ h(x+\x)+\ell(T,\x)\}\Big\},\\
\ns\ds\qq\qq\qq\qq\qq\qq\qq(r,x)\in[0,T]\times\dbR^n,\ea\right.\ee
where
\bel{hN} \h{N}[V](t,r,x)=N[V](t,0,x)\chi_{\{r \ges \d\}} +[2L(T+1)]\chi_{\{r< \d\}}. \ee
\end{theorem}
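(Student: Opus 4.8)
The plan is to show that the claimed system \rf{HJB3} is a reformulation of \rf{HJB1}--\rf{HJB2}, splitting the verification into the two regimes $r\in[0,\d)$ and $r\in[\d,T)$. The key observation driving the definition of $\h N[V]$ in \rf{hN} is that on $r<\d$ the obstacle is set to the large constant $2L(T+1)$, which by \rf{S1} strictly exceeds $V(t,r,x)$; hence the minimum in \rf{HJB3} automatically selects the first (PDE) branch on that region, recovering exactly the first line of \rf{HJB1}. On $r\ges\d$, the obstacle reduces to $N[V](t,0,x)$, and since $V(t,r,x)=V^0(t,x)$ for $r\ges\d$ by \rf{V(d)} and the transport term $V_r$ vanishes there (again by \rf{V(d)}, $V$ is constant in $r$ on $[\d,T]$), the first branch of \rf{HJB3} collapses to $V^0_t+H(t,x,V^0_x,V^0_{xx})$ and the second to $N[V](t,0,x)-V^0(t,x)$, so \rf{HJB3} reduces precisely to \rf{HJB2}.

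Concretely, I would proceed as follows. First, fix $(t,r,x)\in[0,T)\times[0,\d)\times\dbR^n$. By \rf{S1}, $V(t,r,x)\les L(T+1)<2L(T+1)=\h N[V](t,r,x)$, so $\h N[V](t,r,x)-V(t,r,x)>0$; therefore the equation in \rf{HJB3} is equivalent to $V_t+V_r+H(t,x,V_x,V_{xx})=0$, which is exactly the first equation in \rf{HJB1} and holds by the preceding theorem. Second, fix $(t,r,x)\in[0,T)\times[\d,T)\times\dbR^n$. Here $\h N[V](t,r,x)=N[V](t,0,x)$, and by \rf{V(d)} we have $V(t,r,x)=V^0(t,x)$ with $V_r(t,r,x)=0$ (the function is constant in $r$ on $[\d,T]$, so its $r$-derivative is zero on the open set $(\d,T)$; at the left endpoint one uses the one-sided interpretation consistent with \rf{HJB1}'s gluing condition $V(t,\d-,x)=V^0(t,x)$). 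Substituting, the first branch becomes $V^0_t+H(t,x,V^0_x,V^0_{xx})$ and the second becomes $N[V](t,0,x)-V^0(t,x)$, so \rf{HJB3} on this region is literally the first equation of \rf{HJB2}, which holds. Third, the terminal condition: at $s=T$ both \rf{HJB1} and \rf{HJB2} prescribe $V(T,r,x)=V^0(T,x)=\min\{h(x),\inf_{\x\in K}\{h(x+\x)+\ell(T,\x)\}\}$, which is exactly the terminal line of \rf{HJB3}; and this is independent of $r$, consistent with \rf{HJB3} being posed for all $(r,x)\in[0,T]\times\dbR^n$. For the reverse implication (that \rf{HJB3} implies \rf{HJB1}--\rf{HJB2}) one runs the same case split: on $r<\d$ the strict inequality $\h N[V]>V$ forces the PDE branch, giving \rf{HJB1}; on $r\ges\d$ one reads off \rf{HJB2} directly, and \rf{V(d)} is recovered as part of the standing assumption ``with \rf{V(d)}'' in the statement.

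The only genuinely delicate point is the treatment of the transport term $V_r$ at the interface $r=\d$ and the consistency of posing \rf{HJB3} on the half-open region $[0,T)\times[0,T)\times\dbR^n$ rather than splitting at $\d$. The argument is that on $[0,\d)$ one has the nontrivial equation $V_t+V_r+H=0$ together with $V(t,\d-,x)=V^0(t,x)$, on $(\d,T)$ one has $V_r\equiv0$ and the quasi-variational inequality \rf{HJB2}, and the characteristic function $\chi_{\{r\ges\d\}}$ in \rf{hN} is exactly the device that patches these two into a single equation; since $V$ is continuous across $r=\d$ by Theorem \ref{properties} (or rather Lemma \ref{vr}) and the obstacle is chosen so that the minimum never ``sees'' the artificial constant on the interior of $[0,\d)$, no inconsistency arises. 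I expect the write-up itself to be short — essentially the two substitutions above plus invoking \rf{S1} and \rf{V(d)} — with the main obstacle being purely expository: making the $r=\d$ gluing and the role of the oversized constant $2L(T+1)$ transparent to the reader.
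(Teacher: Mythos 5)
Your proposal is correct and follows essentially the same route as the paper: on $r<\delta$ you invoke \rf{S1} to conclude $V<2L(T+1)=\h N[V]$, forcing the PDE branch and recovering \rf{HJB1}; on $r\ges\delta$ you use \rf{V(d)} to set $V_r=0$ and identify $\h N[V]$ with $N[V](t,0,\cdot)$, recovering \rf{HJB2}. The paper's proof is just a terser version of this two-case substitution; your extra remarks on the terminal condition and the $r=\delta$ gluing are consistent elaborations rather than a different method.
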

\begin{proof}
When $r\in [0, \d)$, by \rf{S1}, we have
$$V(t,r,x) \les L(T+1) < \h{N}[V](t,r,x),$$
which implies that \rf{HJB3} is equivalent to \rf{HJB1} in $[0,T) \times [0, \d) \times \dbR^n$.

On the other hand, by \rf{V(d)} we know that $V_r(t,r,x)=0$ when $ r \in [\d, T)$. Hence \rf{HJB3} is equivalent to \rf{HJB2} in $[0,T) \times [\d, T) \times \dbR^n$, which concludes the proof.
\end{proof}
\section{Characterization of the Value Function and Construction of Optimal Control} \label{Sec:Characterization of the Value Function}

It is known that the value function $V(\cd\,,\cd\,,\cd)$ is not necessarily smooth. Thus to make the result of Theorem 3.2 rigorous, let us recall the definition of viscosity solutions. Note that $\sD=[0,T]\times[0,T]\times\dbR^n$, and recall $\h C(\sD)$ defined by \rf{hC}.

\begin{definition}\label{vcd} \rm A function $V(\cd\,,\cd\,,\cd)\in\h C(\sD)$ is called a {\it viscosity subsolution} (resp.{\it viscosity supersolution}) of \rf{HJB3} on $\sD$ if for any $\f\in C^{1,1,2}(\sD)\cap\h C(\sD)$,
\bel{V(T)>}\ba{ll}
\ns\ds V(T,r,x)\1n\les\1n\({\rm resp.}\ges\)\min\1n\Big\{h(x),\inf_{\x\in K}\1n\big[h(x+\x)+\ell(T,\x)\big]\Big\},\\
\ns\ds\qq\qq\qq\qq\qq\qq\qq\qq\forall(r,x)\in[0,T]\times\dbR^n,\ea\ee
and whenever $V-\f$ achieves a local maximum (resp. minimum) at $(t_0,r_0,x_0)$ $\in[0,T) \times[0,T) \times \dbR^n$, it holds
\bel{vc1}\ba{ll}
\ds\min\Big\{\1n \f_t(t_0,r_0,x_0)\1n+\1n\f_r(t_0,r_0,x_0)\1n+\1n H(t_0,x_0,\f_x(t_0,r_0,x_0),\f_{xx}(t_0,r_0,x_0)),\\
\ns\ds\qq\qq\qq\qq\qq\q \h N[V](t_0,r_0,x_0) -V(t_0,r_0,x_0)\1n\Big\}\ges0 ~ ({\rm resp.}\les 0).\ea\ee
A function $V(\cd\,,\cd\,,\cd)\in\h C(\sD)$ is called a {\it viscosity solution} of \rf{HJB3} if it is both a viscosity sub- and super-solution of \rf{HJB3}.
\end{definition}

The main result of this section is the following.

\begin{theorem}\label{exist} \sl Let {\rm(H1)--(H2)} hold. Then the value function $V(\cd\,,\cd\,,\cd)$ is the unique viscosity solution of \rf{HJB3} on $\sD$ satisfying \rf{S1}--\rf{S2}.
\end{theorem}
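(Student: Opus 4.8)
The plan is to prove Theorem \ref{exist} in two parts: first that the value function $V$ is a viscosity solution of \rf{HJB3}, and second that viscosity solutions in the class $\h C(\sD)$ satisfying \rf{S1}--\rf{S2} are unique. For the existence part, the dynamic programming principle (Theorem \ref{max1}) is the key input. To verify the subsolution and supersolution inequalities, I would fix a test function $\f\in C^{1,1,2}(\sD)\cap\h C(\sD)$ and a point $(t_0,r_0,x_0)$ at which $V-\f$ has a local extremum, and treat the two regimes $r_0<\d$ and $r_0\ges\d$ separately, exactly as in the derivation of \rf{HJB1}--\rf{HJB2}. For $r_0<\d$, the obstacle term $\h N[V](t_0,r_0,x_0)=2L(T+1)$ strictly dominates $V$ by \rf{S1}, so only the first term in the $\min$ matters; applying \rf{DPP1} over $[t_0,s]$, using It\^o's formula on $\f(s,r_0+s-t_0,X^0(s))$, dividing by $s-t_0$ and letting $s\downarrow t_0$ yields the PDE inequality in the appropriate direction (equality for $V$, hence both). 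For $r_0\ges\d$, one uses parts (ii) of Theorem \ref{max1}: \rf{V^0<} gives the supersolution direction of the PDE, \rf{N} gives $V^0\les N[V]$, and the ``strict inequality forces equality'' dichotomy in \rf{DPP2} gives the subsolution side of the $\min$; here one also needs continuity of $N[V]$, which is Corollary \ref{N0}. The terminal condition \rf{V(T)>} is immediate from the definition of $V$ at $T$ (impulse allowed at $T$ with no lag).

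For uniqueness I would use a comparison principle: if $U$ is a viscosity subsolution and $W$ a viscosity supersolution of \rf{HJB3}, both in $\h C(\sD)$ and satisfying the bounds \rf{S1}--\rf{S2}, then $U\les W$ on $\sD$; applying this twice gives uniqueness. The standard route, following the technique of \cite{Yong-Zhou 1999} referenced in the introduction, is the doubling-of-variables argument. Suppose for contradiction that $\sup_{\sD}(U-W)=:m>0$. One introduces the auxiliary function
$$\Phi_{\b,\e}(t,r,x,\hat t,\hat r,\hat x)=U(t,r,x)-W(\hat t,\hat r,\hat x)-\frac{1}{2\e}\big(|t-\hat t|^2+|r-\hat r|^2+|x-\hat x|^2\big)-\b(\psi(x)+\psi(\hat x))-\l(t+\hat t),$$
for small parameters $\b,\l>0$, a penalization $\e>0$, and a suitable growth function $\psi$ (here the state is in all of $\dbR^n$ so one needs a mild coercive penalty to secure a maximizer, or alternatively one works on large balls and uses the Lipschitz bound \rf{S2}); the $\l(t+\hat t)$ term handles the terminal-time boundary. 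Letting $(\bar t,\bar r,\bar x,\hat t,\hat r,\hat x)$ be a maximizer, the usual estimates show the penalization terms force $|\bar t-\hat t|,|\bar r-\hat r|,|\bar x-\hat x|\to0$ as $\e\to0$, and that the maximizer is interior (not at $t=T$) for $\b,\l$ small once $m>0$. Then one applies the Crandall--Ishii maximum principle (the theorem on sums) to produce the second-order jets, and plugs them into the sub/supersolution inequalities for \rf{HJB3}.

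The genuinely delicate point — and the place I expect the main obstacle — is handling the $\min$ structure of \rf{HJB3}, i.e. the obstacle/quasi-variational term $\h N[V](t,r,x)-V(t,r,x)$, together with the discontinuity of $\h N$ in $r$ at $r=\d$ and its nonlocal dependence (through $N[V](t,0,x)=\inf_{\x\in K}\{V(t,0,x+\x)+\ell(t,\x)\}$) on the solution at the shifted point $(t,0,x+\x)$. When at the maximizer the obstacle term is the active one in the $\min$ for the subsolution, one gets $U(\bar t,\bar r,\bar x)\les\h N[U](\bar t,\bar r,\bar x)$; if $\bar r\ges\d$ this reads $U(\bar t,\bar r,\bar x)\les U(\bar t,0,\bar x+\x_0)+\ell(\bar t,\x_0)$ for a near-optimal $\x_0$, while the supersolution gives $W(\hat t,\hat r,\hat x)\ges W(\hat t,0,\hat x+\x_0)+\ell(\hat t,\x_0)$; subtracting and using that $(U-W)$ at the shifted points $(\cdot,0,\cdot+\x_0)$ is again bounded by $m$ — but with the quadratic penalty evaluated there being \emph{smaller} because $r$ was shifted to $0$ — one must argue this produces a strict decrease, contradicting maximality. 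This requires care because the shift changes the $r$-coordinate from $\bar r$ to $0$; here one leans on the monotonicity/structure \rf{V(d)} and \rf{vr7} (that $V(t,\cdot,x)$ is nonincreasing on $[0,\d]$ and constant on $[\d,T]$), built into membership in $\h C(\sD)$, to control the mismatch. For $\bar r<\d$ the obstacle is inactive by \rf{S1}, so there the PDE part always governs and the argument reduces to the classical second-order comparison. Assembling these cases — obstacle-active vs.\ PDE-active, and $\bar r<\d$ vs.\ $\bar r\ges\d$ — and pushing $\e\to0$ then $\b,\l\to0$ yields $m\les0$, the contradiction. The routine second-order estimates and the exact choice of penalty functions are the kind of technical detail I would relegate to an appendix, as the authors indicate they do.
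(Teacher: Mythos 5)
Your existence argument matches the paper's: use Theorem~3.2 (DPP) at the regimes $r_0<\d$ and $r_0\ges\d$ respectively, apply It\^o to the test function, and note that the obstacle $\h N[V]-V\ges0$ is automatic (by \rf{S1} for $r<\d$ and by \rf{N} for $r\ges\d$). The paper's actual proof is even terser (it only invokes \rf{DPP1}), but the intent is the same.

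For uniqueness, there are two deviations. The first is presentational: you propose doubling variables plus the Crandall--Ishii theorem on sums, whereas the paper (following Yong--Zhou) regularizes $V$ by sup-/inf-convolutions $V^\g,\h V_\g$ (Lemmas A.2--A.3), shows these are sub-/supersolutions of slightly perturbed equations with $H^\g,H_\g,\h N^\g,\h N_\g$, and then invokes Alexandrov's theorem and Jensen's lemma directly. These are well-known to be interchangeable for second-order comparison, so this is not a gap, though your attribution of the doubling-plus-Crandall--Ishii method specifically to \cite{Yong-Zhou 1999} is slightly off: that reference uses the convolution route.

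The genuine gap is in how you propose to close the obstacle-active case. Note that for the subsolution $U$ the $\min\ges0$ condition forces \emph{both} the PDE $\ges0$ and $\h N[U]\ges U$, so the PDE is always usable for $U$; the delicate case is only when the supersolution $W$ satisfies $W\ges\h N[W]$ at the doubled maximizer (with $\hat r\ges\d$), because then the PDE inequality for $W$ is unavailable. Your suggested mechanism --- that shifting $(\bar x,\hat x)\mapsto(\bar x+\x_0,\hat x+\x_0)$ and $(\bar r,\hat r)\mapsto(0,0)$ strictly increases the test quantity because the $\tfrac1{2\n}|r-\hat r|^2$ penalty drops to zero --- is not enough. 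The impulse costs $\ell(\bar t,\x_0)$ and $\ell(\hat t,\x_0)$ essentially cancel in the difference $U-W$, the $|r-\hat r|^2$ drop vanishes as $\n\to0$, while the growth penalty $\b(\psi(x+\x_0)-\psi(x))+\b(\psi(\hat x+\x_0)-\psi(\hat x))$ may \emph{increase} by order $\b|\x_0|$, with $|\x_0|$ of order one. No sign is forced, and the argument stalls. What actually breaks the ``obstacle loop'' in the paper is the Ishii-type multiplicative scaling: instead of $U-W-\text{penalty}$, one maximizes $(1-\a G)V^\g-\h V_\g-\f$ with $G$ chosen so large that $G\ell_0>2C_0$ (see \rf{u52}). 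Under the shift by $\x_0$ this produces a surplus $\a G\ell(\bar t_0,\bar\x_0)-2\a|\bar\x_0|\ges\a(G\ell_0-2C_0)>0$, which yields the strict contradiction establishing \rf{u62}, namely that the supersolution's obstacle is \emph{not} active at the maximizer. Once that is known, only the PDE side governs $\h V_\g$, and the second-order comparison proceeds exactly as in the classical case. This scaling idea, together with the positive lower bound $\ell\ges\ell_0>0$, is the missing ingredient in your proposal. A secondary omission, easily repaired, is the penalty terms $\k/t+\k/s+\th/r+\th/u$ the paper adds to keep the maximizer away from $\{t=0\}$ and $\{r=0\}$; your $\l(t+\hat t)$ addresses the terminal side but not these.
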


\begin{proof} Let $\f\in C^{1,1,2}(\sD)\cap\h C(\sD)$. Suppose that $V-\f$ attained a local maximum (resp. minimum) at $(t_0,r_0,x_0)\in[0,T)\times[0,T)\times\dbR^n$ and $X^0(\cd)$ be the state trajectory defined by \rf{y0}. By \rf{DPP1} with $\hat t_0\downarrow t_0$ and It\^o's formula, one has
$$\ba{ll}
\ds0\les\(\hb{resp.}\ges\){1\over\hat t_0-t_0}\dbE\Big\{V(t_0,r_0,x_0)-\f(t_0,r_0,x_0)\\
\ns\ds \qq \qq \qq -V(\hat{t}_0,r_0+\hat{t}_0-t_0, X^0(\hat{t}_0))+\f(\hat{t}_0,r_0+\hat{t}_0-t_0, X^0(\hat{t}_0))\Big\}\\
\ns\ds = \1n\frac{1}{\hat{t}_0-t_0}\dbE\Big\{\1n\int_{t_0}^{\hat{t}_0}\1n g(\t, X^0(\t))d \t -\f(t_0,r_0,x_0)+\f(\hat{t}_0,r_0+\hat{t}_0-t_0, X^0(\hat{t}_0))\Big\}\\
\ns\ds \to \f_t(t_0,r_0,x_0)+\f_r(t_0,r_0,x_0)+H(t_0,x_0,\f_x(t_0,r_0,x_0),\f_{xx}(t_0,r_0,x_0)).
\ea$$
Then one has \rf{vc1}, with the fact that
$$\h N[V](t_0,r_0,x_0) -V(t_0,r_0,x_0) \ges 0.$$

\ms

The proof of uniqueness essentially follows from the arguments in \cite{Tang-Yong 1993} and \cite{Yong-Zhou 1999}, with some suitable modifications. For readers' convenience, we put the detailed proof in the appendix.  \end{proof}

\ms

The following gives a construction of an optimal impulse control.

\begin{theorem} \sl Let $x$ be the initial state of \rf{state1}. Define the impulse control $\x(\cdot)=\sum\limits_{i \ges 1}\x_i \chi_{[\t_i, T]}$ on $[0,T]$ inductively as follows: $\t_0=-\d$,
$$\ba{ll}
\ns\ds\t_{i}=\inf\Big\{s\1n\in\1n[(\t_{i\1n-\1n 1}\1n+\d)\land T,T]\bigm|\\
\ns\ds\qq\qq\qq V(s,s-\t_{i \1n-\1n 1},\1n X^{\1n(i\1n-\1n 1)}\1n (s))\1n=\1n N[V](s,s-\t_{i\1n-\1n 1},\1n X^{\1n(i\1n-\1n 1)}\1n(s))\Big\},\ea$$
and there exists a $\x_i \in K$ such that
\bel{xi*} \ba{ll}
\ns\ds V(\t_i,\t_i-\t_{i-1},X^{(i-1)}(\t_i-0))\\
\ns\ds=V(\t_i,0,X^{(i-1)}(\t_i-0)+\x_i)+\ell(\t_i,\x_i)\\
\ns\ds= \inf_{\x \in K} \big\{ V(\t_i,0,X^{(i-1)}(\t_i-0)+\x)+\ell(\t_i,\x)\big\}, \ea \ee
where $1 \les i \les [\frac{T}{\d}]+1$ and $X^{(i-1)}(\cdot)$ is the result of applying impulse control $\x(\cdot)=\sum\limits_{m=1}^{i-1}\x_m \chi_{[\t_m, T]}$ on system \rf{state1} . Then, $\x(\cdot)=\sum\limits_{i \ges 1}\x_i \chi_{[\t_i, T]}$ is an optimal control for Problem (IC).
\end{theorem}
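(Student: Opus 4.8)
The plan is to show that the constructed impulse control $\bar\x(\cd)=\sum_i\x_i\chi_{[\t_i,T]}$ is admissible and then verify optimality by a dynamic-programming (verification) argument, running the value function along the controlled trajectory and checking that each inequality in the DPP is saturated by the construction.

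First I would check admissibility: the stopping times $\t_i$ are defined as hitting times of a continuous process against a continuous obstacle (using Corollary~\ref{N0} for continuity of $N[V]$ and Theorem~\ref{properties} for continuity of $V$), hence they are $\dbF$-stopping times; by construction $\t_i\ges(\t_{i-1}+\d)\land T$, so the decision-lag constraint \rf{lag} holds, and $\t_1\ges(\t_0+\d)\vee 0=0\vee 0$ consistently with \rf{firsttime} since we take $\t_0=-\d$ (so the initial elapsed time is $r=\d$, i.e.\ an immediate impulse is allowed). The choice of $\x_i\in K$ realizing the infimum in \rf{N[V]} exists because, by the coercivity in \rf{ell}, the infimum may be taken over $K\cap B(0,C_R)$, a compact set on which $\x\mapsto V(\t_i,0,X^{(i-1)}(\t_i-)+\x)+\ell(\t_i,\x)$ is continuous. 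Finiteness of the number of impulses, $\k(\bar\x(\cd))\les[T/\d]+1$, is forced by the lag, and $\dbE\sum_i\ell(\t_i,\x_i)<\i$ follows from \rf{S1} and the bound $\dbE|\x_i|\les C_0$ established before \rf{K_r^0}; so $\bar\x(\cd)\in\sK_\d[0,T]$.

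Next, the optimality verification. Write $\bar X(\cd)$ for the trajectory under $\bar\x(\cd)$ and denote the successive elapsed times $R(s)=s-\t_{i-1}$ on $[\t_{i-1},\t_i)$. On each interval $[\t_{i-1},\t_i)$ the process evolves with no impulse, and by the definition of $\t_i$ we have $V(s,R(s),\bar X(s))<N[V](s,0,\bar X(s))$ for $s\in[\t_{i-1},\t_i)$ (strict, except possibly at the right endpoint); hence on this interval the relevant HJB relation is the PDE part of \rf{HJB3}, i.e.\ along the uncontrolled flow the DPP \rf{DPP1} (for the portion with $R(s)<\d$) and \rf{DPP2} (for $R(s)\ges\d$, valid precisely because the strict inequality in \rf{N} persists up to $\t_i$) give
\[
V(\t_{i-1},R(\t_{i-1}),\bar X(\t_{i-1}))=\dbE\Big\{V(\t_i-,R(\t_i-),\bar X(\t_i-))+\int_{\t_{i-1}}^{\t_i}g(\t,\bar X(\t))\,d\t\Big\}.
\]
At the impulse time $\t_i$, the defining property \rf{xi*} of $\x_i$ gives exactly
\[
V(\t_i,R(\t_i-),\bar X(\t_i-))=V(\t_i,0,\bar X(\t_i-)+\x_i)+\ell(\t_i,\x_i)=V(\t_i,0,\bar X(\t_i))+\ell(\t_i,\x_i),
\]
so no loss is incurred across the jump. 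At the final interval after the last impulse, the same flow identity holds down to $s=T$, and the terminal condition in \rf{HJB3} together with the convention that an impulse at $T$ costs no lag shows $V(T,\cd,\bar X(T))=\min\{h(\bar X(T)),\inf_{\x}[h(\bar X(T)+\x)+\ell(T,\x)]\}$, which is $\les h(\bar X(T))$; splitting according to whether the terminal minimum is attained by a jump or not, one absorbs a possible last impulse into $\bar\x(\cd)$. Summing the interval identities, telescoping the $V$-terms, and taking expectations yields
\[
V(0,\d,x)=\dbE\Big\{\int_0^T g(\t,\bar X(\t))\,d\t+h(\bar X(T))+\sum_{i\ges1}\ell(\t_i,\x_i)\Big\}=J(0,\d,x;\bar\x(\cd)),
\]
i.e.\ $\bar\x(\cd)$ attains the infimum defining $V(0,\d,x)$, so it is optimal for Problem (IC).

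The main obstacle is the second step on the \emph{uncontrolled} intervals: one needs the DPP identity \rf{DPP2} to hold all the way up to $\t_i$, which requires that the strict inequality $V^0<N[V]$ in \rf{N} indeed persists on the whole open interval $[\t_{i-1},\t_i)$, together with a pasting/measurability argument to chain the conditional expectations across the random times $\t_i$ (using the strong Markov property of $X^0$ and $\cF_{\t_i}$-measurability of $\x_i$). Care is also needed at the boundary $R(s)=\d$, where the governing equation switches from \rf{HJB1} to \rf{HJB2}; since $V_r=0$ for $r\ges\d$ by \rf{V(d)}, the two pieces match and the telescoping $V$-terms are continuous in $s$, so no extra correction term appears — but this continuity across $r=\d$, guaranteed by Theorem~\ref{properties}, must be invoked explicitly. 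A minor point is justifying the limit $\hat t\downarrow t$ in the It\^o/DPP expansion uniformly enough to conclude the PDE holds in the classical sense along $\bar X(\cd)$; if $V$ is only a viscosity solution one instead argues directly from the DPP identities \rf{DPP1}, \rf{DPP2} without differentiating, which is the cleaner route and the one I would follow.
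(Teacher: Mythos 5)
Your proposal takes essentially the same route as the paper: a verification argument that runs the dynamic programming identities of Theorem~\ref{max1} along the controlled trajectory between consecutive impulse times, invokes \eqref{xi*} to pass through each jump without loss, and telescopes to obtain $V(0,\d,x)=J(0,\d,x;\bar\x(\cdot))$. The paper's proof is considerably terser: it asserts, citing Theorem~\ref{max1} and \eqref{xi*}, the single-interval identity
$V(\t_i,0,X^{(i)}(\t_i+))=\dbE\big\{\int_{\t_i}^{\t_{i+1}}g\,ds+V(\t_{i+1},0,X^{(i+1)}(\t_{i+1}+))+\ell(\t_{i+1},\x_{i+1})\big\}$
and sums. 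You fill in several things the paper leaves tacit and does not prove: admissibility of $\bar\x(\cdot)$ (stopping-time property via continuity of $V$ and $N[V]$, the lag constraint, attainment of the infimum in \eqref{N[V]} on a compact subset of $K$, the bound $\k\les[T/\d]+1$), the switch from the $r<\d$ regime \eqref{DPP1} to the $r\ges\d$ regime \eqref{DPP2} across $r=\d$ using \eqref{V(d)}, and the terminal condition including a possible impulse at $T$. Most importantly, you correctly flag the point that the paper does not address: Theorem~\ref{max1}(ii) guarantees \eqref{DPP2} only up to \emph{some} $t_0>t$, and extending it to the (random) hitting time $\t_{i+1}$, together with the pasting of conditional expectations across stopping times, requires an additional argument that the paper simply omits. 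Your identification of this gap is accurate; it is a weakness of the paper's own proof rather than of your reconstruction, and your proposed route (arguing directly from the DPP identities rather than differentiating, and iterating the DPP up to the obstacle hitting time) is the standard way to close it.
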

\begin{proof}
Let $\x(\cdot)=\sum\limits_{i=1}^k\x_i \chi_{[\t_i, T]}$ be constructed as in the theorem and $X(\cdot)$ be the state of applying $\x(\cdot)$. By the definition of $\t_i$ and $\x_i$, we see that
$$V(\t_i, 0, X^{(i)}(\t_i+0)) < N[V](\t_i, 0, X^{(i)}(\t_i+0)).$$
Then, by \autoref{max1} and \rf{xi*},
$$ \ba{ll}
\ds \q V(\t_{i},0,X^{(i)}(\t_{i}+0))\\
\ns\ds =\dbE\big\{\int_{\t_i}^{\t_{i+1}} g(s, X^{(i)}(s))d s+V(\t_{i+1}, \t_{i+1}-\t_i, X^{(i)}(\t_{i+1}-0))\big\}\\
\ns\ds  = \dbE\big\{\int_{\t_i}^{\t_{i+1}} g(s, X^{(i)}(s))d s+V(\t_{i+1}, 0, X^{(i+1)}(\t_{i+1}+0))+\ell(\t_{i+1},\x_{i+1})\big\}. \ea$$
Hence, summing from $0$ to $T$ gives
$$\ba{ll}
\ns\ds V(0,\d,x)=\dbE\Big\{\int_0^{\t_1}f(s,X(s))ds+\ell(\t_1,\x_1)+V(\t_1,0,X(\t_1+0))\Big\}\\
\ns\ds=\dbE\Big\{ \int_0^{\t_k}f(s,X(s))ds+\sum_{i=1}^k\ell(\t_i,\x_i)+V(\t_k, T-\t_k, X(\t_k+0))\Big\}\\
\ns\ds=\dbE\Big\{\int_0^{T}f(s,X(s))ds+\sum_{i=1}^k\ell(\t_i,\x_i)+h(X(T)) \Big\}=J(0,\d,x;\x(\cd)),\ea$$
which proves the optimality of $\x(\cdot)$.
\end{proof}

To conclude this paper, we look at the situation when the decision lag $\d$ approaches 0. We let $\d_{\e}$ be a sequence of decision lags such that
$$\lim_{\e \to 0}\t_{\e}=0.$$
Let \autoref{H1}--$\2n$\autoref{H2} hold. Denote $V_{\e}(\cdot,\cdot,\cdot)$ as the family of the value functions corresponding to the  decision lag $\d_{\e}$. Let $V^0(\cdot,\cdot)$ be the value function of classical impulse control problem without decision lag. Fix $\e>0$ and $(t,x) \in [0,T] \times \dbR^n$. From \autoref{properties} we see that when $r \in [\d_{\e},T]$,
$$V_{\e}(t,r,x)=V^0(t,x),$$
and when $r \in [0,\d_{\e})$,
$$0\les V_{\e}(t,r,x)-V^0(t,x) = V_{\e}(t,r,x) - V_{\e}(t,\d_{\e}, x) \les C|\t_{\e}-r|.$$
Let $\e \to 0$, we obtain
\be
\lim_{\e \to 0} V_{\e}(t,r,x)=V^0(t,x),\qq (t,r,x) \in [0,T] \times  [0,T] \times \dbR^n ,
\ee

 Hence, we conclude that the impulse control problem with decision lag agrees with the classical case when decision lag approaches 0.

\section{Conclusion}\label{Sec:Value FunctioConclusion}

This paper studies an optimal stochastic impulse control problem with a decision lag $\d>0$. The introduction of the elapsed time $r$ helps us fully understand the problem. Continuity of the value function $V(t,r,x)$ in all its arguments is proved directly, which is by no means trivial. A suitable version of the dynamic programming principle is established, which takes into account the cases $r \in [0,\d)$ and $r \in [\d,T)$ separately. The corresponding HJB equations are coupled and involve a new derivative term $V_r$. Following the standard approach to stochastic control, the value function is shown to be the unique viscosity solution to these HJB equations. An optimal impulse control is constructed from the given value  function. Morever, a  limiting  case  with  the  decision  lag  approaching $0$ is discussed,  which exactly recovers the classical impulse control problems.

\begin{footnotesize}

\end{footnotesize}

\section*{Appendix}

In this appendix, we present a proof of the uniqueness of the viscosity solution to the HJB equation \rf{HJB3}. We first prove a useful lemma.

\ms

\no\bf Lemma A.1. \sl Suppose $V(\cd\,,\cd\,,\cd)\in\h C(\sD)$ is a viscosity solution of \rf{HJB3} satisfying \rf{S1}--\rf{S2}. Then
\bel{V0N} V(t,r,x)\les \h N[V](t,r,x),\qq\forall(t,r,x)\in[0,T]\times[0,T]\times\dbR^n.\ee

\rm

\begin{proof}
It is enough to prove \rf{V0N} for all $(t,r,x) \in [0,T) \times [\d, T) \times  \dbR^n$, where $V(t,r,x)=V^0(t,x)$ and $\h N[V](t,r,x)=N[V](t,0,x)$.  We define
$$\F(s,y)=V^0(s,y)-\frac{1}{\e}(|t-s|^2+|x-y|^2), \qq (s,y) \in [0,T] \times  \dbR^n.$$
with some $\e \in (0,1]$. Then there exists a point $(s_\e, y_\e)\in [0,T] \times  \dbR^n$ such that
$$\F(s_\e, y_\e) = \max_{(s,y) \in [0,T] \times  \dbR^n} \F(s,y) \ges \F(t,x)=V^0(t,x).$$
We see easily that
$$\lim_{\e \to 0}|s_\e-t|=0, \qq \lim_{\e \to 0}|y_\e-x|=0.$$
Thus, for $\e>0$ small enough, we have $(s_\e, y_\e) \in [0,T) \times  \dbR^n$. Then, by  \autoref{vcd},
$$V^0(s_\e, y_\e) \les N[V](s_\e,0, y_\e).$$
Sending $\e \to 0$ and using the continuity of $V^0(\cd,\cd)$, we obtain \rf{V0N}.
\end{proof}

Next, inspired by \cite{Yong-Zhou 1999}, for any $v\in \h{C}(\sD)$ satisfying \rf{S1}--\rf{S2}, and any $\g\in(0,1)$, we define
\bel{v^g}\ba{ll}
\ds v^\g(t,r,x)\1n\deq\2n\sup_{(t',r',x')\in\sD}\1n\Big\{v(t',r',x')-{1\over2\g^2}
\(|t-t'|^2+|r-r'|^2+|x-x'|^2\)\Big\}, \\
\ds v_\g(t,r,x)\1n\deq\2n\inf_{(t',r',x')\in\sD}\1n\Big\{ v(t',r',x')+{1\over2\g^2}\(|t-t'|^2+|r-r'|^2+|x-x'|^2\)\Big\},\\
\ns\ds \qq\qq\qq\qq\qq\qq\qq\qq\qq\qq\qq\qq\qq(t,r,x)\in\sD.\ea\ee
Note that
$$\ba{ll}
\ns\ds v(t',r',x')-{1\over2\g^2}\(|t-t'|^2\1n+|r-r'|^2\1n+|x-x'|^2\)+{1\over2\g^2}\(|t|^2+|r|^2+|x|^2\)\\
\ns\ds=v(t',r',x')-{1\over2\g^2}\(|t'|^2+|r'|^2+|x'|^2\)+{1\over\g^2}\(t\,t'+r\,r'+\lan x,x'\ran\),\ea$$
which is a linear function of $(t,r,x)$ with parameters $(t',r',x')$. Hence, the supremum $v^\g(t,r,x)+{1\over2\g^2}\big(|t|^2+|r|^2+|x|^2\big)$ of the above with respect to $(t',r',x')$ is convex. In this case, we say that $(t,r,x)\mapsto v^\g(t,r,x)$ is semiconvex.
Likewise, $(t,r,x)\mapsto v_\g(t,r,x)$ is semiconcave. We have the following lemma concerning the functions $v^\g(\cd\,,\cd\,,\cd)$ and $v_\g(\cd\,,\cd\,,\cd)$.

\ms

\no\bf Lemma A.2. \sl {\rm(i)} Let $v\in \h{C}(\sD)$ satisfy \rf{S1}--\rf{S2}. Then the function $v^\g(\cd\,,\cd\,,\cd)$ is semiconvex, and $v_\g(\cd\,,\cd\,,\cd)$ is semiconcave, satisfying the following:
\bel{semi1}|v^\g(t,r,x)|+|v_\g(t,r,x)|\les C,\qq\forall(t,r,x)\in\sD,\g>0, \ee
\bel{semi11}\ba{ll}
\ns\ds \q|v^\g(t,r,x)-v^\g(s,u,y)|+|v_\g(t,r,x)-v_\g(s,u,y)|\\
\ns\ds \les C\big(|t-s|^{\frac{1}{2}}+|r-u|^{\frac{1}{2}}+|x-y|\big),\qq  \forall (t,r,x,s,u,y) \in \sD^2,\g>0,\ea\ee
for some constant $C>0$. Moreover, for any $(t,r,x)\in\sD$,
there exist $(\hat t,\hat r,\hat x)$, $(\bar t,\bar r,\bar x)\in\sD$ such that
\bel{semi2}\ba{ll}
\ns\ds v^\g(t,r,x)=v(\hat t,\hat r,\hat x)-{1\over2\g^2}\(|t-\hat t|^2+|r-\hat r|^2+|x-\hat x|^2\)\\
\ns\ds v_\g(t,r,x)=v(\bar t,\bar r,\bar x)+{1\over2\g^2}\(|t-\bar t|^2+|r-\bar r|^2
+|x-\bar x|^2\),\ea\ee
and for some absolute constant $C$,
\bel{semi3}{1\over2\g^2}\(|t-\hat t|^2+|r-\hat r|^2+|x-\hat x|^2\)
+{1\over2\g^2}\(|t-\bar t|^2+|r-\bar r|^2+|x-\bar x|^2\)\les C\g^{1\over2}.\ee
Consequently,
\bel{semi4}0\les v^\g(t,r,x)-v(t,r,x),~v(t,r,x)-v_\g(t,r,x)\les C\g^{1\over2}. \ee

\rm

The proof is inspired by \cite{Yong-Zhou 1999}. Because of \rf{semi4}, we call $v^\g(\cd\,,\cd\,,\cd)$ and $v_\g(\cd\,,\cd\,,\cd)$ {\it semiconvex} and {\it semiconcave approximations} of $v(\cd\,,\cd\,,\cd)$, respectively. Also, we see that
\bel{semi8} 0 \les v^{\g}(t,r,x)-v_{\g}(t,r,x)\les C\g^{1\over2}.\ee
Next, for any $(t,x,p,P)\in[0,T]\times\dbR^n\times\dbR^n\times\dbS^n$, we define
\bel{U2}\left\{\3n\ba{ll}
\ds H^\g\1n(t,x,p,\1n P)\1n\deq\3n\sup_{(t'\1n,x')\in[0,T]\times\dbR^n}\3n\big\{\1n H(t'\1n,x'\1n,p,\1n P)\1n\bigm|\1n {1\over2\g^2}(|t\1n-\1n t'|^2\1n+\1n|x-x'|^2)\1n\les\1n C\g^{1\over2}\1n\big\},\\
\ns\ds H_\g\1n(t,x,p,\1n P)\1n\deq\3n\inf_{(t'\1n,x')\in[0,T]\times\dbR^n}\3n\big\{\1n H(t'\1n,x'\1n,p,\1nP)\1n\bigm|\1n {1\over2\g^2}(|t\1n-\1n t'|^2\1n+\1n|x\1n-\1n x'|^2)\1n\les\1n C\g^{1\over2}\1n\big\},\\
\ns\ds \h N^\g[V](t,r,x)\1n\deq\3n\sup_{(t',r',x')\in\sD}\3n\big\{\h N[V](t'\1n,r'\1n,x')\1n-\1n{1\over2\g^2}(|t\1n-\1n t'|^2\2n+\1n|r\1n-\1n r'|^2\2n+\1n|x\1n-\1n x'|^2)\1n\bigm|\\
\ns\ds\qq\qq\qq\qq\qq\qq\q{1\over2\g^2}(|t\1n-\1n t'|^2\1n+\1n|r\1n-\1n r'|^2\1n+\1n|x\1n-\1n x'|^2])\1n\les\1n C\g^{1\over2}\big\},\\
\ns\ds \h N_\g[V](t,r,x)\1n\deq\3n\inf_{(t',r',x')\in\sD}\3n\big\{\h N[V](t'\1n,r'\1n,x')\1n+\1n{1\over2\g^2}(|t\1n-\1n t'|^2\2n+\1n|r\1n-\1n r'|^2\2n+\1n|x\1n-\1n x'|^2)\1n\bigm|\\
\ns\ds\qq\qq\qq\qq\qq\qq\q{1\over2\g^2}(|t\1n-\1n t'|^2\1n+\1n|r\1n-\1n r'|^2\1n+\1n|x\1n-\1n x'|^2)\1n\les\1n C\g^{1\over2}\big\},\ea\right.\ee
with $C>0$ being the constant appears in \rf{semi3}.
It is clear that one has
\bel{HN}\ba{ll}
\ds \lim_{\g\to 0}H^{\g}(t,x,p,P)=\lim_{\g\to 0}H_{\g}(t,x,p,P)=H(t,x,p,P),\\
\ns \ds \lim_{\g\to 0}\h N^{\g}[V^{\g}](t,r,x) =\lim_{\g\to 0}\h N_{\g}[V_{\g}](t,r,x)= \h N[V](t,r,x) \ea \ee
uniformly for the arguments $t,r,x,p,P$ in compact sets. Next we present the following result.

\ms

\no\bf Lemma A.3. \sl Let {\rm(H1)--(H2)} hold and $v(\cd\,,\cd\,,\cd)\in \h{C}(\sD)$ be a viscosity subsolution of \rf{HJB3}. Then, for each $\g \in (0,1)$, $v^{\g}(\cd\,,\cd\,,\cd)$ is a viscosity subsolution of the following:
\bel{vg1}\left\{\ba{ll}
\ds \min\big\{v_t+v_r+H^\g(t,x,v_x,v_{xx}), \h N^\g[v]-v^0\big\}=0,\\
\ns\ds \qq\qq\qq\qq\q\3n (t,r,x)\in[0,T)\times[0,T)\times\dbR^n,\\
\ns\ds v(T,r,x)=v^\g(T,r,x),\qq\q(r,x)\in[0,T]\times\dbR^n.\ea\right.\ee
Likewise, if $v(\cd\,,\cd\,,\cd)\in \h C(\sD) $ is a viscosity supersolution of
\rf{HJB3}, then, for each $\g\in(0,1)$, $v_\g(\cd\,,\cd\,,\cd)$ is a
viscosity supersolution of the following:
\bel{vg3}\left\{\ba{ll}
\ds \min\big\{v_t+v_r+H_\g(t,x,v_x,v_{xx}), \h N_\g[v]-v^0\big\}=0,\\
\ns\ds \qq\qq\qq\qq\q\3n (t,r,x)\in[0,T)\times[0,T)\times\dbR^n,\\
\ns\ds v(T,r,x)=v_\g(T,r,x),\qq\q(r,x)\in[0,T]\times\dbR^n.\ea\right.\ee

\rm

\begin{proof} Let us just look at $v^\g(\cd\,,\cd\,,\cd)$. Suppose $\f\in C^{1,1,2}(\sD)\cap\h C(\sD)$ such that $v^\g-\f$ attains a local maximum at $(t,r,x)\in[0,T)\times[0,T)\times\dbR^n$. Let $(\hat t,\hat r, \hat x)\in[0,T]\times[0,T]\times\dbR^n$ satisfy \rf{semi2}. Then for any $(t',r',x')\in
[0,T)\times[0,T)\times\dbR^n$ near $(t,r,x)$, one has
$$\ba{ll}
\ds v(\hat t,\hat r,\hat x)\1n-\1n\f(t,r,x)\1n=\1n v^\g(t,r,x)\1n-\1n\f(t,r,x)\1n+\1n{1\over2\g^2}
\big(|t\1n-\1n\hat t|^2\1n+\1n|r\1n-\1n\hat r|^2\1n+\1n|x\1n-\1n\hat x|^2\big)\\
\ns\ds\qq\q\ges v^\g(t',r',x')-\f(t',r',x')+{1\over2\g^2}\big(|t-\hat t|^2+|r-\hat r|^2+|x-\hat x|^2\big)\\
\ns\ds\qq\q\ges v(t'-t+\hat t,r'-r+\hat r,x'-x+\hat x)-\f(t',r',x').\ea$$
Consequently, for any $(\t,\rho,\z)\in[0,T)\times[0,t)\times\dbR^n$, near $(\hat t,\hat r,\hat x)$, by letting $t'=\t+t-\hat t$, $r'=\rho+r-\hat r$ and $x'=\z+x-\hat x$, we get
$$v(\hat t,\hat r,\hat x)-\f(t,r,x)\ges v(\t,\rho,\z)-\f(\t+t-\hat t,\rho+r-\hat r,\z+x-\hat x),$$
which means that the function $(\t,\rho,\z)\mapsto v(\t,\rho,\z)-\f(\t+t-\hat t,\rho+r-\hat r,\z+x-\hat x)$ attains a local maximum at $(\t,\rho,\z)=(\hat t,\hat r,\hat x)$. Thus, by \rf{vc1} and \rf{U2}, we obtain
$$\ba{ll}
\ds\f_t(t,r,x)+\f_r(t,r,x)+H^\g(t,x,\f_x(t,r,x),\f_{xx}(t,r,x))\\
\ns\ds\ges \f_t(t,r,x)+\f_r(t,r,x)+H(t,x,\f_x(t,r,x),\f_{xx}(t,r,x)) \ges 0.\ea$$
Moreover, by \rf{V0N}, we obtain
\be \ba{ll}
\ds v^{\g}(t,r,x)\1n=\1n \sup_{\1n(t',r',x')\in\sD}\1n\Big\{v(t',r',x')\1n-\1n{1\over2\g^2}\(|t\1n-\1n t'|^2\1n+\1n|r\1n-\1n r'|^2\1n+\1n|x\1n-\1n x'|^2
\)\Big\}\\
\ns\ds\les\sup_{(t',r',x')\in\sD}\Big\{\h N[v](t',r',x')-{1\over2\g^2}\(|t- t'|^2+|r-r'|^2+|x- x'|^2\)\bigm|\\
\ns\ds \qq\qq\qq\qq\qq\q{1\over2\g^2}\(|t- t'|^2+|r-r'|^2+|x-x'|^2\)\les C\g^{1\over2}\Big\} \\
\ns\ds = \h N^{\g}[v^\g](t,r,x).\ea \ee
Thus,
$$\ba{ll}
\ns\ds\min\Big\{\f_t(t,r,x)+\f_r(t,r,x)+H \big(t,x,\f_x(t,r,x),\f_{xx}(t,r,x)\big),\\
\ns\ds\qq\qq\qq\qq\qq\qq\qq \h N^{\g}[v^{\g}](t,r,x)\1n-\1n v^{\g}(t,r,x)\Big\}\ges0.\ea $$
This proves that $v^\g$ is a viscosity subsolution of \rf{vg1}. In a same manner, we can prove that $v_\g$  is a viscosity supersolution of \rf{vg3}. \end{proof}

Now we are ready to prove the uniqueness of the viscosity solution.

\begin{proof}  Let $V(\cd\,,\cd\,,\cd), \h V(\cd\,,\cd\,,\cd) \in \h C(\sD)$ be two viscosity solutions of \rf{HJB3} satisfying \rf{S1}--\rf{S2}. We claim that
\bel{contra}V(t,r,x)\les\h V(t,r,x),\qq\forall(t,r,x)\in\sD.\ee
We prove this by contradiction. Suppose \rf{contra} is false, then there exists a point $(\bar t,\bar r,\bar x)\in(0,T)\times(0,T)\times\dbR^n$ such that
$$2\eta\deq V(\bar t,\bar r,\bar x)-\h V(\bar t,\bar r,\bar x)>0.$$
Let $V^\g(\cd\,,\cd\,,\cd)$ and $\h V_\g(\cd\,,\cd\,,\cd)$ be the semiconvex and semiconcave approximations of $V(\cd\,,\cd\,,\cd)$ and $\h V(\cd\,,\cd\,,\cd)$, respectively. By \rf{semi4}, for all small enough $\g>0$,
\bel{u5}V^\g(\bar t,\bar r,\bar x)-\h V_\g(\bar t,\bar r,\bar x)\ges\eta>0.\ee
Take constants $G>0$ large enough and $\a \in (0,1)$ small enough, so that the following hold:
\bel{u52} \a G < 1, \qq  2\a (\lan\bar{x}\ran +  G C) <\eta /2, \qq \a(G\ell_0-2C_0) >0. \ee
Here $\lan \bar{x} \ran = \sqrt{1+|\bar{x}|^2}$. For any $\a, \b,\e ,\l, \m, \n \in (0,1)$, define
$$ \left\{ \2n\ba{ll}
\ds \f(t,r,x,s,u,y)=\a\(1-{t+s\over4T}\)(\lan x \ran+\lan y \ran)-\b(t+s)\\
\ns\ds\qq\qq\q+{1\over2\l}|x-y|^2+{1\over2\m}|t-s|^2+{1\over2\n}|r-u|^2+{\k\over t}+{\k\over s}+{\th\over r}+{\th\over u}\\
\ns\ds \F(t,r,x,s,u,y)=(1-\a G)V^{\g}(t,r,x)-\h{V}_{\g}(s,u,y)-\varphi(t,r,x,s,u,y),\\
\ns\ds\qq\qq\qq\qq\qq\qq\qq\forall (t,r,x), (s,u,y) \in [0, T] \times [0, T] \times \dbR^n. \ea \right.$$
By \rf{semi1}, we have
$$\left\{\2n\ba{ll}
\ds\lim_{|x|+|y|\to\infty}\F(t,r,x,s,u,y)=-\i,\ \hb{uniformly in~}t,s\in
(0,T],~r,u\in(0,T],\\
\ns\ds\lim_{t\land s\da0}\F(t,r,x,s,u,y)=-\i,\q\hb{uniformly in~}x,y\in
\dbR^n,~r,u\in(0,T],\\
\ns\ds\lim_{r\land u\da0}\F(t,r,x,s,u,y)=-\i,\q\hb{uniformly in~}x,y\in
\dbR^n,~t,s\in(0,T].\ea\right.$$

Thus, there exists a $(t_0,r_0,x_0,s_0,u_0,y_0)\in\{(0,T]\times(0,T]\times\dbR^n\}^2$ (depending on the parameters $\a, \b,\l, \m, \n, \k, \th$ and $\g$) such that
$$\ba{ll}
\ns\ds\F(t_0,r_0,x_0,s_0,u_0,y_0)=\2n\max\limits_{\{(0, T]\1n \times\1n(0, T]\1n \times\1n \dbR^n \}^2}\F(t,r,x,s,u,y)\ges \F(T,T,0,T,T,0)\\
\ns\ds =(1-\a G)V^{\g}(T,T,0)-\h{V}_{\g}(T,T,0)+2\b T -\frac{2}{T}(\k+\th) \ea$$
This, together with  \rf{semi1}, yields the following
\bel{ax}\ba{ll}
\ns\ds \a (\lan x_0\ran\1n+\1n \lan y_0\ran)\1n+\1n\frac{1}{2\l}|x_0\1n-\1n y_0|^2\1n+\1n\frac{1}{2\m}|t_0\1n-\1n s_0|^2\1n +\1n {1\over2\n}|r_0-u_0|^2\\
\ns\ds + \frac{\k(T-t_0)}{t_0 T}+\frac{\k(T-s_0)}{s_0 T}+\frac{\th(T-r_0)}{r_0\d}+\frac{\th(T-u_0)}{u_0 \d}\les M \ea \ee
for some constant $M>0$, independent of $\a, \b,\l, \m, \n, \k, \th$ and $\g$. Consequently, there is a constant $M_{\a}$(independent of $\b,\l, \m, \n,\k, \th$ and $\g$) such that
\bel{u22}\left\{\ba{ll}
\ns\ds|x_0|+ |y_0|+\frac{1}{2\l}|x_0-y_0|^2+\frac{1}{2\m}|t_0-s_0|^2+{1\over2\n}|r_0-u_0|^2 \les M_{\a},\\
\ns\ds \frac{\k T}{M T+\k} \les t_0, s_0 \les T,\qq \frac{\th T}{MT+\th} \les r_0,u_0 \les T. \ea \right. \ee
Next, from the inequality
$$2\F(t_0,r_0,x_0,s_0,u_0,y_0) \ges \F(t_0,r_0,x_0,t_0,r_0,x_0)+\F(s_0,u_0,y_0,s_0,u_0,y_0).$$
along with \rf{u22} and \rf{semi11}, it follows that
\bel{u32} \ba{ll}
\ds \frac{1}{2\l}|x_0- y_0|^2+\frac{1}{2\m}|t_0- s_0|^2+{1\over2\n}|r_0-u_0|^2\\
\ns\ds\les(1-\a G)|V^{\g}(t_0,r_0,x_0)\1n- \1n V^{\g}(s_0,u_0,y_0)|\1n+\1n|\h{V}_{\g}(t_0,r_0,x_0)\1n-\1n \h{V}_{\g}(s_0,u_0,y_0)|\\
\ns\ds \les 2C\{|x_0-y_0|+|t_0-s_0|^{\frac{1}{2}}+|r_0-u_0|^{\frac{1}{2}}\} \to 0, \qq  \text{~as~} \l,\m,\n\to 0. \ea \ee
Note that $(t_0,r_0,x_0,s_0,u_0,y_0)$ depends on the parameters $G, \a, \b,\l, \m, \n,\k, \th$ and $\g$. We claim that for any $(\a,\b,\l, \m, \n,\k, \th, \g)$ small and $G$ large enough, the following cannot be true:
\bel{u42}t_0\lor s_0=T.\ee
In fact, if the above is true, then
\bel{u102} \ba{ll}
\ns\ds (1\1n-\1n\a G)V^{\g}(\bar{t},\bar{r},\bar{x})\1n-\1n\h{V}_{\g}(\bar{t},\bar{r},\bar{x})\1n-\1n2\a \(1\1n-\1n\frac{\bar{t}}{2T}\)\lan \bar{x} \ran\1n+\1n 2\b \bar{t}\1n-\1n \frac{2\k}{\bar{t}}\1n-\1n\frac{2\th}{\bar{r}}\\
\ns\ds=\F(\bar{t},\bar{r},\bar{x},\bar{t},\bar{r},\bar{x})\les\F(t_0,r_0,x_0,s_0,u_0,y_0)\\
\ns\ds\les(1-\a G)V^{\g}(t_0,r_0,x_0)-\hat{V}_{\g}(s_0,r_0,y_0)+2\b T. \ea \ee
Now we send $\l, \m, \n \to 0$. By \rf{u22}, \rf{u32} and \rf{u42}, some subsequence of $(t_0,r_0,x_0,s_0,u_0,y_0)$ converges and the limit has to be of the form $(T,\bar{r}_0, \bar{x}_0, T, \bar{r}_0, \bar{x}_0)$. Then \rf{u102} becomes
$$ \ba{ll}
\ds (1\1n-\1n\a G)V^{\g}(\bar{t},\bar{r},\bar{x})\1n-\1n\h{V}_{\g}(\bar{t},\bar{r},\bar{x})\1n-\1n2\a \(1\1n-\1n\frac{\bar{t}}{2T}\)\lan \bar{x} \ran\1n+\1n 2\b \bar{t}\1n-\1n \frac{2\k}{\bar{t}}\1n-\1n\frac{2\th}{\bar{r}}\\
\ns\ds \les  (1-\a G) V^{\g}(T,\bar{r}_0, \bar{x}_0)-\h{V}_{\g}(T,\bar{r}_0, \bar{x}_0)+2\b T. \ea $$
Next, by sending $\g \to 0$ and using\rf{semi1}, \rf{semi8} and \rf{u52}, we obtain
$$\ba{ll}
\ds\eta \les V^{\g}(\bar{t},\bar{r},\bar{x})-\h{V}_{\g}(\bar{t},\bar{r},\bar{x})\\
\ns\ds  \les 2\a \lan\bar{x}\ran  + 2\b T+2(\frac{\k}{\bar{t}}+\frac{\th}{\bar{r}}) +2\a GC < 2\b T +2(\frac{\k}{\bar{t}}+\frac{\th}{\bar{r}})+ \frac{\eta}{2}. \ea$$
Finally, by sending $ \b, \k, \th\to 0$, we obtain a contradiction. That means for any  $ (\a,\b ,\l, \m, \n,\k, \th, \g)$ small enough and $G$ large enough, we have $t_0,s_0 \in (0, T).$ From the definition of viscosity solution, we have $r_0,u_0 \in (0, T),$ since $r_0\lor u_0=T$ if and only if $t_0\lor s_0=T$.

Next, we claim that
\bel{u62} \hat{V}_\g(s_0,u_0,y_0) < \h N_\g[\hat{V}_\g](s_0,u_0,y_0).\ee
In fact, if
$$\h{V}_\g(s_0,u_0,y_0)=\h N_\g[\h{V}_\g](s_0,u_0,y_0),$$
we send $\g \to 0$ and by \rf{semi3}, \rf{HN}, some subsequence of $( s_0,u_0, y_0)$, still denoted by itself, converges. Then
$$\h{V}(s_0,u_0,y_0)=\h N[\h{V}](s_0,u_0,y_0)=\h{V}(s_0,0,y_0+\x_0)+\ell(s_0,\x_0),$$
for some $\x_0 \in K$. Note that, by \rf{vr7}, we have
\bel{01} V(t_0,\th^{\frac{1}{2}},x_0+\x_0)- V(t_0,r_0,x_0) \ges -\ell(t_0,\x_0)-C\th^{\frac{1}{4}}, \ee
\bel{02} \h{V}(s_0,\th^{\frac{1}{2}},y_0+\x_0)-\h{V}(s_0,u_0,y_0) \les \ell(s_0,\x_0). \ee
Thus, by \rf{01} and \rf{02}, we obtain
\be \ba{ll}
\ds \lim_{\g \to 0} \F(t_0, \th^{\frac{1}{2}}, x_0+\x_0, s_0, \th^{\frac{1}{2}}, y_0+\x_0)-\F(t_0,r_0, x_0, s_0,u_0, y_0)\\
\ns\ds =\1n(1\1n-\1n\a G)[V(t_0,\th^{\frac{1}{2}}\1n,x_0\1n+\1n\x_0)\2n-\2n V(t_0,r_0,x_0)]\2n-\2n[\h{V}(s_0,\th^{\frac{1}{2}}\1n,y_0\1n+\1n\x_0)\2n-\2n\h{V}(s_0,u_0,y_0)]\\
\ns\ds -\a(1\1n-\1n\frac{t_0\1n +\1n s_0}{4T})(\lan x_0\1n +\1n\x_0 \ran\1n-\1n\lan x_0\ran\1n+\1n\lan y_0\1n+\1n\x_0\ran\1n-\1n\lan y_0\ran)\1n-\1n 2\th^{\frac{1}{2}}\2n+\2n \frac{\th}{r_0}\2n+\2n\frac{\th}{u_0}\2n+\2n\frac{1}{2\nu}|r_0\1n-\1n u_0|^2\\
\ns\ds \ges -(1-\a G)(\ell(t_0,\x_0)+C\th^{\frac{1}{4}})+\ell(s_0,\x_0)-2\th^{\frac{1}{2}}\\
\ns\ds -\a\(1-\frac{t_0+ s_0}{4T}\)\(\lan x_0+\x_0\ran-\lan x_0\ran+\lan y_0+\x_0\ran-\lan y_0\ran\). \ea\ee
Now, we send $\l,\m,\nu \to 0$. Some subsequence of $(t_0,r_0,x_0,s_0,u_0,y_0, \x_0)$ converges and the limit has to be of the form $(\bar{t}_0,\bar{r}_0, \bar{x}_0,\bar{t}_0,\bar{u}_0, \bar{x}_0, \bar{\x}_0)$ by \rf{u22} and \rf{u32}. Then,  with \rf{ell}, \rf{K_r^0} and \rf{ax}, we obtain
\be\ba{ll}
\ds \lim_{\l,\m,\n,\th \to 0}\lim_{\g \to 0}\F(t_0,\th^{\frac{1}{2}}, x_0+\x_0, s_0,\th^{\frac{1}{2}}, y_0+\x_0)-\F(t_0,r_0, x_0, s_0,u_0, y_0)\\
\ns \ds \ges \a G \ell(\bar{t}_0,\bar{\x}_0)-2\a|\bar{\x}_0|\ges \a(G\ell_0-2C_0) >0. \ea\ee
This contradicts the definition of $(t_0,r_0, x_0,s_0,u_0, y_0)$. Hence, \rf{u62} holds. Now for fixed
$\a,\k\in (0,1)$, define
$$\ba{ll}
\ds Q \triangleq \Big\{(t,r,x,s,u,y) \in \{[0,T]\times[0,T]\times \dbR^n \}^2 ~|~ t,s\ges\frac{\k T}{2MT+\k},\\
\ns\ds \qq\qq\qq\qq\qq\qq\qq\qq r,u\ges\frac{\th \d}{2M\d+\th},\, |x|,|y|\les 2M_\a\,\Big\}, \ea$$
with $M_{\a}$ being the same as that appearing in \rf{u22}. Clearly, $\f(t,r,x,s,u,y)$ is semiconcave on $Q$ and therefore, $\F(t,r,x,s,u,y)$ is semiconvex with maximum value at $(t_0,r_0,x_0,s_0,u_0,y_0)$ in the interior of $Q$(noting \rf{u22}). Hence for any small $\om>0$,
$$\ba{ll}
\ds \h\F(t,r,x,s,u,y)\deq\F(t,r,x,s,u,y)-\om(|t-t_0|^2+|s-s_0|^2+|r-r_0|^2\\
\ns\ds\qq\qq\qq\qq\qq\qq\qq\qq+|u-u_0|^2+|x-x_0|^2
+|y-y_0|^2)\ea$$
is semiconvex on $Q$, attaining a strict maximum at $(t_0,r_0,x_0,s_0,u_0,y_0).$ By Alexandrov's theorem and Jensen's lemma, for the above given $\om>0$, there exist $q,\hat{q}.l,\hat{l} \in \dbR$ and $p,\hat{p} \in \dbR^n$ with
\bel{U52}
|q|+|\hat{q}|+|l|+|\hat{l}|+|p|+|\hat{p}|\les \om,\ee
and $(\hat{t}_0,\hat{r}_0,\hat{x}_0,\hat{s}_0,\hat{u}_0,\hat{y}_0) \in Q$ with
\bel{U62}
|\hat{t}_0-t_0|+|\hat{r}_0-r_0|+|\hat{x}_0-x_0|+|\hat{s}_0-s_0|+|\hat{u}_0-u_0|+|\hat{y}_0-y_0|\les \om
\ee
such that
$$\ba{ll}
\ns\ds \h{\F}(t,x,s,y)+qt+\hat{q}s+lr+\hat{l}u+\lan p, x \ran +\lan \hat{p}, y \ran  \\
\ns\ds \equiv (1\1n-\1n\a G)V^{\g}(t,r,x)\1n-\1n\h{V}_{\g}(s,u,y)\1n-\1n\f(t,r,x,s,u,y)\1n-\1n\om(|t\1n-\1n t_0|^2\1n+\1n|s\1n-\1n s_0|^2\1n \\
\ns\ds\q +\1n|r\1n-\1n r_0|^2\1n+\1n|u\1n-\1n u_0|^2\1n+\1n|x\1n-\1n x_0|^2\1n+\1n|y\1n-\1n y_0|^2)\1n+\1n qt\1n+\1n\hat{q}s\1n+\1n l r\1n+\1n\hat{l}u\1n+\1n\lan p, x \ran\1n +\1n\lan \hat{p}, y \ran  \ea$$
attains a maximum at $(\hat{t}_0,\hat{r}_0,\hat{x}_0,\hat{s}_0,\hat{u}_0,\hat{y}_0)$, at which $(1-\a G)V^{\g}(t,r,x)-\h{V}_{\g}(s,u,y)$ is twice differentiable. For notational simplicity, we now drop $\g$ in $V^{\g}(t,r,x)$ and $\hat{V}_{\g}(s,u,y)$. Then, by the first- and second-order necessary conditions for a maximum point, at the point $(\hat{t}_0,\hat{r}_0,\hat{x}_0,\hat{s}_0,\hat{u}_0,\hat{y}_0)$, we must have
\bel{U32}
\left\{\2n \ba{ll}
\ns\ds V_t\1n=\1n\frac{1}{1\1n-\1n\a G}[\f_t\1n+\1n2\om(\hat{t}_0\1n-\1n t_0)\1n-\1n q],\q\h{V}_s\1n=-\1n\f_s\1n-\1n2\om(\hat{s}_0\1n-\1n s_0)\1n+\1n\hat{q},\\
\ns\ds V_r\1n=\1n\frac{1}{1\1n-\1n\a G}[\f_r\1n+\1n2\om(\hat{r}_0\1n-\1n r_0)\1n-\1nl],\q\h{V}_{u}\1n=-\1n\f_{u}\1n-\1n2\om(\hat{u}_0\1n-\1n u_0)\1n+\1n\hat{l},\\
\ns\ds V_x\1n=\1n\frac{1}{1\1n-\1n\a G}[\f_x\1n+\1n2\om(\hat{x}_0\1n-\1n x_0)\1n-\1n p],\q\h{V}_y\1n=-\f_y-2\om(\hat{y}_0\1n-\1n y_0)\1n+\1n\hat{p},\\
\ns\ds \begin{bmatrix}(1-\a G)V_{xx} & 0 \\ 0 & -\h{V}_{yy} \end{bmatrix} \les \begin{bmatrix}\f_{xx}+2\om I_n & \f_{xy}\\ \f^\top_{xy} & \f_{yy}+ 2\om I_n \end{bmatrix} , \ea \right. \ee
where $I_{2n}$ is the $2n \times 2n$ identity matrix. Now, at point  $(\hat{t}_0,\hat{r}_0,\hat{x}_0,\hat{s}_0,\hat{r}_0,\hat{y}_0)$, we calculate the following:
\bel{U42}
\left\{ \ba{ll}
\ns\ds \f_t= -\b-\frac{\k}{(\hat{t}_0)^2}-\frac{\a}{4T}(\lan\hat{x}_0\ran+\lan\hat{y}_0\ran)+\frac{1}{\m}(\hat{t}_0-\hat{s}_0),\\
\ns\ds \f_s= -\b-\frac{\k}{(\hat{s}_0)^2}-\frac{\a}{4T}(\lan\hat{x}_0\ran+\lan\hat{y}_0\ran)+\frac{1}{\m}(\hat{s}_0-\hat{t}_0),\\
\ns\ds \f_r=-\frac{\th}{(\hat{r}_0)^2}+\frac{1}{\n}(\hat{r}_0-\hat{u}_0),\\
\ns\ds \f_u=-\frac{\th}{(\hat{u}_0)^2}+\frac{1}{\n}(\hat{u}_0-\hat{r}_0),\\
\ns\ds \f_x= \a (1-\frac{\hat{t}_0+\hat{s}_0}{4T})\frac{\hat{x}_0}{\lan\hat{x}_0\ran}+\frac{\hat{x}_0-\hat{y}_0}{\l},\\
\ns\ds \f_y= \a (1-\frac{\hat{t}_0+\hat{s}_0}{4T})\frac{\hat{y}_0}{\lan\hat{y}_0\ran}+\frac{\hat{y}_0-\hat{x}_0}{\l},\\
\ns\ds A\equiv\begin{bmatrix} \varphi_{xx} & \varphi_{xy} \\ \varphi_{xy}^\top & \varphi_{yy} \end{bmatrix}=\frac{1}{\l}\begin{bmatrix} I_n &-I_n \\ -I_n & I_n \end{bmatrix}\\
\ns\ds\qq\qq\qq\qq+\1n\a (1\1n-\1n\frac{\hat{t}_0\1n+\1n\hat{s}_0}{4T}) \begin{bmatrix} \frac{I}{\lan\hat{x}_0\ran}\1n-\1n\frac{xx^\top}{\lan\hat{x}_0\ran^3} & 0 \\ 0 & \frac{I}{\lan\hat{y}_0\ran}\1n-\1n\frac{yy^\top}{\lan\hat{y}_0\ran^3} \end{bmatrix}. \ea \right. \ee
On the other hand, by Lemma A.3, \rf{u62} and the definition of viscosity sub- and super-solutions, we have
$$\left\{\3n \ba{ll}
\ns\ds V_t(\hat{t}_0,\hat{r}_0,\hat{x}_0)\1n+\1n V_r(\hat{t}_0,\hat{r}_0,\hat{x}_0)\1n+\1n H^{\g}(\hat{t}_0,\hat{x}_0,V_x(\hat{t}_0,\hat{r}_0,\hat{x}_0),V_{xx}(\hat{t}_0,\hat{r}_0,\hat{x}_0))\1n\ges\1n 0, \\
\ns\ds \h{V}_s(\hat{s}_0,\hat{u}_0,\hat{y}_0)\1n+\1n\h{V}_u(\hat{s}_0,\hat{u}_0,\hat{y}_0)\1n+\1n H_{\g}(\hat{s}_0,\hat{y}_0,\h{V}_y(\hat{s}_0,\hat{u}_0,\hat{y}_0),\h{V}_{yy}(\hat{s}_0,\hat{u}_0,\hat{y}_0))\1n\les\1n 0, \ea \right.$$
By \rf{U2}, one can find a $(\bar{t}_0,\bar{x}_0,\bar{s}_0,\bar{y}_0)$ with
\bel{U112}
|\bar{t}_0-\hat{t}_0|+|\bar{x}_0-\hat{x}_0|+|\bar{s}_0-\hat{s}_0| +|\bar{y}_0-\hat{y}_0| \les C\g, \ee
for some $C>0$, such that
\bel{U72}\3n\3n \ba{ll}
\ns\ds \h{V}_s(\hat{s}_0,\hat{u}_0,\hat{y}_0)\1n+\1n\h{V}_{u}(\hat{s}_0,\hat{u}_0,\hat{y}_0)\1n-\1n(1\1n-\1n\a G)\(V_t(\hat{t}_0,\hat{r}_0,\hat{x}_0)\1n+\1n V_r(\hat{t}_0,\hat{r}_0,\hat{x}_0)\)\\
\ns\ds\les(1-\a G)H^{\g}(\hat{t}_0,\hat{x}_0,V_x(\hat{t}_0,\hat{r}_0,\hat{x}_0), V_{xx}(\hat{t}_0,\hat{r}_0,\hat{x}_0))\\
\ns\ds \qq\qq -H_{\g}(\hat{s}_0,\hat{y}_0,\h{V}_y(\hat{s}_0,\hat{u}_0,\hat{y}_0),\h{V}_{yy}(\hat{s}_0,\hat{u}_0,\hat{y}_0))\\
\ns\ds = (1-\a G)H(\bar{t}_0,\bar{x}_0,V_x(\hat{t}_0,\hat{r}_0,\hat{x}_0), V_{xx}(\hat{t}_0,\hat{r}_0,\hat{x}_0))\\
\ns\ds\qq\qq-H(\bar{s}_0,\bar{y}_0,\h{V}_y(\hat{s}_0,\hat{u}_0,\hat{y}_0),\h{V}_{yy}(\hat{s}_0,\hat{u}_0,\hat{y}_0)) \\
\ns\ds =\frac{1}{2}\tr\[\si(\bar{t}_0,\bar{x}_0)^\top (1-\a G)V_{xx}(\hat{t}_0,\hat{r}_0,\hat{x}_0)\si(\bar{t}_0,\bar{x}_0)\\
\ns\ds \qq\qq\qq\qq-\si(\bar{s}_0,\bar{y}_0)^\top\h{V}_{yy}(\hat{s}_0,\hat{u}_0,\hat{y}_0)\si(\bar{s}_0,\bar{y}_0)\]\\
\ns\ds \qq+\[\lan(1\1n-\1n\a G)V_x(\hat{t}_0,\hat{r}_0,\hat{x}_0), b(\bar{t}_0,\bar{x}_0)  \ran\1n-\1n\lan \h{V}_y(\hat{s}_0,\hat{u}_0,\hat{y}_0), b(\bar{s}_0,\bar{y}_0) \ran\]\\
\ns\ds\qq+\[(1-\a G)g(\bar{t}_0,\bar{x}_0)- g(\bar{s}_0,\bar{y}_0)\]\\
\ns\ds \equiv (I)+(II)+(III). \ea \ee
By \rf{U52}-\rf{U42}, we have
$$ \ba{ll}
\ns \ds \h{V}_s(\hat{s}_0,\hat{u}_0,\hat{y}_0)\1n+\1n\h{V}_{u}(\hat{s}_0,\hat{u}_0,\hat{y}_0)\1n-\1n(1\1n-\1n\a G)V_t(\hat{t}_0,\hat{r}_0,\hat{x}_0)\1n-\1n(1\1n-\1n\a G)V_r(\hat{t}_0,\hat{r}_0,\hat{x}_0)\\
\ns \ds =2\b+\frac{\a}{2T}(|\hat{x}_0|^2+|\hat{y}_0|^2)+\frac{\k}{(\hat{t}_0)^2}+\frac{\k}{(\hat{s}_0)^2}+\frac{\th}{(\hat{r}_0)^2}+\frac{\th}{(\hat{u}_0)^2}\\
\ns\ds \qq-2\om(\hat{t}_0-t_0+\hat{r}_0-r_0+\hat{s}_0-s_0+\hat{u}_0-u_0)+\hat{q}+\hat{l}+q+l\\
\ns\ds \ges 2\b+\frac{\a}{2T}(|\hat{x}_0|^2+|\hat{y}_0|^2)-M\om, \ea$$
for some absolute constant $M>0$. By \rf{u32} and \rf{U62}, we see that one may assume that as $\l,\m,\n,\om \to 0$, $(\hat{t}_0,\hat{r}_0,\hat{x}_0)$ and $(\hat{s}_0,\hat{u}_0,\hat{y}_0)$ converge to the same limit, denoted by $(t_{\a},r_\a,x_{\a})$, to emphasize the dependence on $\a$. Thus, letting $\l,\m,\n,\om \to 0$ in the above leads to
\bel{U82}\3n \ba{ll}
\ns\ds \h{V}_s(t_{\a},\1n r_\a,\1n x_{\a})\1n+\1n\h{V}_{u}(t_{\a},\1n r_\a,\1n x_{\a})\1n-\1n(1\1n-\1n\a G)\(\1nV_t(t_{\a},\1n r_\a,\1n x_{\a})\1n+\1n V_r(t_{\a},\1n r_\a,\1n x_{\a})\1n\)\\
\ns\ds \ges 2\b+\frac{\a}{T}|\hat{x}_{\a}|^2, \ea\ee
This gives an estimate for the left-hand side of \rf{U72}.  Now we estimate the terms (I),(II),(III) on the right side of \rf{U72} one by one. First of all, from  \rf{u32},\rf{U62},\rf{U112} and the continuity of $g(t,x)$, one may obtain an estimate for (III):
\bel{U122}
\lim_{\substack{\l,\m, \g,\om \to 0}}(III) \triangleq (1- \a G)g(\bar{t}_0,\bar{x}_0)-g(\bar{s}_0,\bar{y}_0)\les \a GL.\ee
Next,
$$ \ba{ll}
\ns\ds (II)  \triangleq  \lan (1-\a G)V_x(\hat{t}_0,\hat{r}_0,\hat{x}_0), b(\bar{t}_0,\bar{x}_0)  \ran -\lan \h{V}_y(\hat{s}_0,\hat{u}_0,\hat{y}_0), b(\bar{s}_0,\bar{y}_0) \ran\\
\ns\ds  =\lan \a (1-\frac{\hat{t}_0+\hat{s}_0}{4T})\frac{\hat{x}_0}{\lan\hat{x}_0\ran}+\frac{\hat{x}_0-\hat{y}_0}{\l}+2\om(\hat{x}_0-x_0)-p, b(\bar{t}_0,\bar{x}_0)  \ran\\
\ns \ds  \q +\lan \a (1-\frac{\hat{t}_0+\hat{s}_0}{4T}))\frac{\hat{y}_0}{\lan\hat{y}_0\ran}+\frac{\hat{y}_0-\hat{x}_0}{\l}+2\om(\hat{y}_0-y_0)-\hat{p}, b(\bar{s}_0,\bar{y}_0) \ran\\
\ns \ds\les 2\a L (1\1n-\1n\frac{\hat{t}_0\1n+\1n\hat{s}_0}{4T})(|\h{x}_0|\1n+\1n|\h{y}_0|)\1n+\1n \om(1\1n+\1n 2\om)L\1n +\1n\lan \frac{\hat{x}_0\1n-\1n\hat{y}_0}{\l}, b(\bar{t}_0,\bar{x}_0)\1n-\1n b(\bar{s}_0,\bar{y}_0) \ran .\ea $$
Letting $\m,\om,\g \to 0$, from \rf{u32}, \rf{U62} and \rf{U112}, we may assume that $(\hat{t}_0,\hat{x}_0,\hat{s}_0,\hat{y}_0)$ and $(\bar{t}_0,\bar{x}_0,\bar{s}_0,\bar{y}_0)$ converge to the same limit, which is denoted by
$(t_{0},x_{0},t_{0},y_{0})$. Thus
$$\lim_{\substack{\m,\om,\g \to 0}}(II) \les  2\a L (1-\frac{t_0}{2T})(|x_0|+|y_0|)+L\frac{|x_0-y_0|^2}{\l}$$
Then let $\l \to 0$, one concludes that $(t_0,x_0)$ and $(t_0,y_0)$ approach a common limit, called $(t_{\a},x_{\a})$. Consequently,
\bel{U92}
\lim_{\substack{\l \to 0}}\lim_{\substack{\m,\om,\g \to 0}}(II) \les 4\a L(1-\frac{t_{\a}}{2T})|x_{\a}|. \ee
Now we treat (I) in \rf{U72}. By the inequality in \rf{U32},
$$\ba{ll}
\ns\ds (I)\deq{1\over2}\tr\[\si(\bar{t}_0,\bar{x}_0)^\top(1-\a G)V_{xx}(\hat{t}_0,\hat{r}_0,\hat{x}_0)\si(\bar{t}_0,\bar{x}_0)\\
\ns\ds \qq\qq\qq\qq\qq-\si(\bar{s}_0,\bar{y}_0)^\top \h{V}_{yy}(\hat{s}_0,\hat{u}_0,\hat{y}_0)\si(\bar{s}_0,\bar{y}_0)\]\\
\ns\ds \1n ={1\over2}\tr \1n\[ \1n\begin{bmatrix} \si(\bar{t}_0,\bar{x}_0) \\\si(\bar{s}_0,\bar{y}_0) \end{bmatrix}^\top \2n\begin{bmatrix} (1\1n-\1n\a G)V_{xx}(\hat{t}_0,\hat{r}_0,,\hat{x}_0) & 0 \\ 0 & -\1n\h{V}_{yy}(\hat{s}_0,\hat{u}_0,\hat{y}_0) \end{bmatrix}\1n \begin{bmatrix} \si(\bar{t}_0,\bar{x}_0) \\ \si(\bar{s}_0,\bar{y}_0) \end{bmatrix}\]\\
\ns\ds \les {1\over2}\tr \[ \begin{bmatrix} \si(\bar{t}_0,\bar{x}_0) \\\si(\bar{s}_0,\bar{y}_0) \end{bmatrix}^\top [A+2 \om I_{2n}] \begin{bmatrix} \si(\bar{t}_0,\bar{x}_0) \\\si(\bar{s}_0,\bar{y}_0) \end{bmatrix}\]\\
\ns\ds\les \1n{1\over2}\{\1n\frac{1}{\l}|\si(\bar{t}_0,\1n\bar{x}_0)\1n-\1n\si(\bar{s}_0,\1n\bar{y}_0)|^2\2n +\1n[\a (1\1n-\1n\frac{\hat{t}_0\1n+\1n\hat{s}_0}{4T})\1n+\1n 2\om](|\si(\bar{t}_0,\1n\bar{x}_0)|^2\2n+\1n|\si(\bar{s}_0,\1n\bar{y}_0)|^2)\}\\
\ns \ds \les {1\over2\l}|\si(\bar{t}_0,\bar{x}_0)-\si(\bar{s}_0,\bar{y}_0)|^2+2[\a (1-\frac{\hat{t}_0+\hat{s}_0}{4T})+2\om]L^2. \ea$$
As above, we first let  $\m,\om,\g \to 0$ and then let $\l \to 0$ to get
\bel{U102}
\lim_{\substack{\l \to 0}}\lim_{\substack{\m,\om,\g \to 0}}(I) \les 2\a L^2(1-\frac{t_{\a}}{2T}). \ee
Combining \rf{U72} - \rf{U102}, we obtain
$$2\b+\frac{\a}{T}|x_{\a}|^2\les \a GL + 4\a L(1-\frac{t_{\a}}{2T})|x_{\a}|+2\a L^2(2-\frac{t_{\a}}{2T}).$$
Note that $t_{\a} \in (0,T)$, we have
\bel{u112}\b\les\a\Big\{-\frac{1}{2T}|x_{\a}|^2+2L|x_{\a}|+2L^2+\frac{GL}{2}\Big\}.\ee
It is clear that the term inside the braces on the right-hand side of \rf{u112} is bounded from above uniformly in $\a$. Thus, by sending $\a \to 0$, we obtain $\b \les 0$, which contradicts our assumptions $\b >0$. This proves \rf{contra}.
\end{proof}


\begin{thebibliography}{00}

\bibitem{Alvarez 2002} L.~Alvarez and J.~Keppo, The impact of delivery lags on irreversible investment under uncertainty, {\em Eur. J. Oper. Res.}, {\bf 136}, (2002) 173–180.

\bibitem{Aouchiche 2011} K.~Aouchiche, F.~Bonnans, G.~Granato, H.~Zidani, A Stochastic Dynamic Principle for Hybrid Systems with Execution Delay and Decision Lags, {\em Proc. of IEEE Conference on Decision and Control and European Control}, Orlando, USA, December 12-15, 2011, pp. 6788-6793.

\bibitem{Bar-Ilan 1996} A.~Bar-Ilan and W.~C.~Strange, Investment Lags, {\em Am. Econ. Rev.}, {\bf 86}, (1996) 610–622.

\bibitem{Bayraktar-Egami 2007} E.~Bayraktar and M.~Egami, The effects of implementation delay on decision-making under uncertainty, {\em Stoch. Proc. Appl.}, {\bf 117}, (2007) 333-358.

\bibitem{Bensoussan-Lions 1973} A.~Bensoussan and J.~L.~Lions, Nouvells formulation de probl\`emes de contr\^ole impulsonnel et applications, {\em C. R. Acad. Sci. Paris}, {\bf 276}, (1973) 1182-1192.

\bibitem{Bensoussan-Lions 1984} A.~Bensoussan and J.~L.~Lions, {\rm Impulse Control and Quasi-Variational Inequalities}, Bordes, Paris, 1984.

\bibitem{Borkar-Ghosh-Sahay 1999} V.~S.~Borkar, M.~K.~Ghosh, and Sahay, Optimal control of a stochastic hybrid system with discount cost, {\em J. Optim. Theory Appl.}, {\bf 101}, (1999) 557-580.

\bibitem{Bruder-Pham 2009} B.~Bruder, and H.~Pham, Impulse control problem on finite horizon with execution delay, {\em Stoch. Proc. Appl.}, {\bf 119}, (2009) 1436-1469.

\bibitem{Fleming-Souganidis 1989} W.~H.~Fleming, and P.~E.~Souganidis, On the existence of value functions of two-player, zero-sum stochastic differential games, {\em Indiana Univ. Math. J.}, {\bf 38}, (1989) 293-314

\bibitem{Granato-Zidani 2014} G.~Granato and H.~Zidani, Level-set approach for reachability analysis of hybrid systems under lag constraints, {\em SIAM J. Control Optim.}, {\bf 52}, (2014) 606-628.

\bibitem{Hdhiri-Karouf 2018} I.~Hdhiri and M.~Karouf, Optimal stochastic impulse control with random coefficients and execution delay, {\em Stochastics}, {\bf 90}, (2018) 151-164.

\bibitem{Lenhart 1989} S.~M.~Lenhart, Viscosity solutions associated with impulse control problems for piecewise-deterministic process, {\em Internat. J. Math. Math. Sci.}, {\bf 12}, (1989) 145-157.

\bibitem{Menaldi 1987} J.~L.~Menaldi, Optimal impulse control problems for degenerate diffusion with jumps, {\em Acta Math. Appl.}, {\bf 8}, (1987) 165-198.

\bibitem{Menaldi-Robin 2017} J.~L.~Mendaldi and M.~Robin, On some impulse control problems with constraint, {\em SIAM J. Control Optim.}, {\bf 55}, (2017) 3204-3225.

\bibitem{Oksandal-Sulem 2008} B.~{\O}ksandal and A.~Sulem, Optimal stochastic impulse control with delayed reaction, {\em App. Math. Optim.}, {\bf 58}, (2008) 243-255.

\bibitem{Palczewski-Stettner 2010} J.~Palczewski, and L.~Stettner, Finite horizon optimal stopping of time-discontinuous functionals with applications to impulse control with delay, {\em SIAM J. Control Optim.}, {\bf 48}, (2010) 4874-4909.

\bibitem{Robin 1976} M.~Robin, Contr\^ole impulsinonnel avec retard pour les processus de diffusion, {\em C.R.A.S.}, {\bf 282}, (1976) 463-466.

\bibitem{Stettner 2011} L.~Stettner, Asymptotics of HARA utility from terminal wealth under proportional transaction costs with decision lag or execution delay and obligatory diversification, {\em Advanced Mathematical Methods for Finance}, G.~Di Nunno and B.~ {\O}ksendal (Eds.), Springer, Berlin–Heidelberg 2011, pp. 509–536.

\bibitem{Tang-Yong 1993} S.~Tang and J.~Yong,  Finite horizon stochastic optimal switching and impulse controls with a viscosity solution approach, {\em Stoch. Stoch. Rep.}, {\bf 45}, (1993) 145-176.

\bibitem{Yong 1994} J.~Yong, Zero-sum differential games involving impulse controls, {\em Appl. Math. Optim.}, {\bf 29}, (1994) 243-261.

\bibitem{Yong-Zhou 1999} J.~Yong and X.~Y.~Zhou, {\em Stochastic Controls: Hamiltonian Systems and HJB Equations}, Springer-Verlag, New York, 1999.


\end{thebibliography}
\end{document}